\author{Dilip Raghavan}
\thanks{First author partially supported by National University of Singapore research grant number R-146-000-211-112.}
\address{Department of Mathematics\\
National University of Singapore\\
Singapore 119076.}
\email{raghavan@math.nus.edu.sg}
\urladdr{http://www.math.toronto.edu/raghavan}
\author{Stevo Todorcevic}
\thanks{Second author  partially supported by grants of  NSERC and CNRS}
\address{Dpartment of Mathematics, University of Toronto, Toronto, Canada, M5S 2E4.}
\address{Institut de Math\'{e}matique de Jussieu, UMR 7586, Case 247, 4 place Jussieu, 75252 Paris Cedex, France.}
\email{stevo@math.toronto.edu, todorcevic@math.jussieu.fr}
\date{\today}
\subjclass[2010]{03E02, 03E17, 03E55, 03E50}
\keywords{partition relation, Suslin tree, cardinal invariant, measurable cardinal}
\title{Suslin trees, the bounding number, and partition relations}
\def\polhk#1{\setbox0=\hbox{#1}{\ooalign{\hidewidth
    \lower1.5ex\hbox{`}\hidewidth\crcr\unhbox0}}}
\newtheorem{Theorem}{Theorem}
\newtheorem{Claim}[Theorem]{Claim}
\newtheorem{Lemma}[Theorem]{Lemma}
\newtheorem{Cor}[Theorem]{Corollary}
\newtheorem{Question}[Theorem]{Question}
\theoremstyle{definition}
\newtheorem{Def}[Theorem]{Definition}
\theoremstyle{remark}
\newcommand{\forces}{\Vdash}
\newcommand{\restrict}{\mathord{\upharpoonright}}
\renewcommand{\c}{\mathfrak{c}}
\renewcommand{\b}{\mathfrak{b}}
\renewcommand{\[}{\left[}
\renewcommand{\]}{\right]}
\renewcommand{\P}{\mathbb{P}}
\newcommand{\Q}{\mathbb{Q}}
\newcommand{\lc}{\left|}
\newcommand{\rc}{\right|}
\newcommand\ZFC{\mathrm{ZFC}}
\newcommand\MA{\mathrm{MA}}
\newcommand\PFA{\mathrm{PFA}}
\newcommand\GCH{\mathrm{GCH}}
\newcommand\CH{\mathrm{CH}}
\newcommand{\BS}{{\omega}^{\omega}}
\DeclareMathOperator{\pred}{pred}
\DeclareMathOperator{\cone}{cone}
\DeclareMathOperator{\suppt}{suppt}
\DeclareMathOperator{\otp}{otp}
\DeclareMathOperator{\add}{add}
\DeclareMathOperator{\dom}{dom}
\DeclareMathOperator{\succc}{succ}
\DeclareMathOperator{\hgt}{ht}
\DeclareMathOperator{\cf}{cf}
\newcommand{\Pset}{\mathcal{P}}
\newcommand{\N}{\mathcal{N}}
\newcommand{\U}{{\mathcal{U}}}
\newcommand{\I}{{\mathcal{I}}}
\newcommand{\II}{{\mathscr{I}}}
\newcommand{\LL}{{\mathbb{L}}}
\newcommand{\F}{{\mathcal{F}}}
\newcommand{\Ta}{{\mathbb{T}}}
\newcommand{\V}{{\mathbf{V}}}
\newcommand{\VG}{{{\mathbf{V}}[G]}}
\newcommand{\VP}{{\mathbf{V}}^{\P}}
\newcommand{\XX}{{\mathcal{X}}}
\newcommand{\Sa}{\mathbb{S}}
\newcommand{\pr}[2]{\langle #1, #2 \rangle}
\newcommand{\seq}[4]{\langle {#1}_{#2}: #2 #3 #4 \rangle}
\begin{document}
\begin{abstract}
We investigate the unbalanced ordinary partition relations of the form $\lambda\rightarrow (\lambda, \alpha)^2$ for various values of the cardinal $\lambda$ and the ordinal $\alpha$.
For example, we show that for every infinite cardinal $\kappa,$ the existence of a ${\kappa}^{+}-$Suslin tree implies ${\kappa}^{+} \not\rightarrow {\left( {\kappa}^{+}, {\log}_{\kappa}({\kappa}^{+}) + 2 \right)}^{2}$.
The consistency of the positive partition relation $\b\rightarrow (\b, \alpha)^2$ for all $\alpha<\omega_1$ for the bounding number $\b$ is also established from large cardinals.
\end{abstract}
\maketitle
\section{Introduction} \label{sec:intro}
We will only consider partitions of pairs into two colors in this paper.
Many positive relations that hold for such partitions tend to fail for more general ones, or hold only in a weakened form, including all of the positive relations considered in this paper. 
The reader who is also interested in more general partitions may consult \cite{partitionsurvey} or \cite{partitionbible} for a survey. 
A wealth of both positive and negative results is now known about partitions of pairs into two colors.
\begin{Def} \label{def:basic}
 For any set $X$ and cardinal $\kappa$, ${\[X\]}^{\kappa} = \{A \subset X: \lc A \rc = \kappa\}$.
 In particular, ${\[X\]}^{2}$ denotes the collection of unordered pairs from $X$.
 
 For any set $\XX$ and a function $c: \XX \rightarrow 2$, ${K}_{0, c}$ denotes $\{A \in \XX: c(A) = 0\}$ and ${K}_{1, c}$ denotes $\{A \in \XX: c(A) = 1\}$.
\end{Def}
We will sometimes omit the subscript $c$ when it is clear from the context.
The following notation, which is known as the \emph{ordinary partition relation}, was first systematically studied in a 1956 paper by Erd{\H o}s and Rado~\cite{ErdRado}.
This concept will be the main focus of our paper.
For any set of ordinals $X$, $\otp(X)$ denotes the order--type of $X$ with its natural well--order.
\begin{Def} \label{def:partsymbol}
 Let $\alpha$, $\beta$, and $\gamma$ be ordinals.
 $\alpha \rightarrow {(\beta, \gamma)}^{2}$ means the following: for any function $c: {\[\alpha\]}^{2} \rightarrow 2$ \emph{either}
 \begin{itemize}
  \item
  there is a set $X \subset \alpha$ such that $\otp(X) = \beta$ and ${\[X\]}^{2} \subset {K}_{0}$,
 \end{itemize}
 \emph{or}
 \begin{itemize}
  \item
  there is a set $X \subset \alpha$ such that $\otp(X) = \gamma$ and ${\[X\]}^{2} \subset {K}_{1}$.
 \end{itemize}
\end{Def}
Using this notation we have the simplest partition relations $6 \rightarrow {\left( 3, 3 \right)}^{2}$ and $5 \not \rightarrow {\left( 3, 3 \right)}^{2}$. 
The well--known Ramsey's theorem can be stated as the relation $\omega \rightarrow {\left( \omega, \omega \right)}^{2}$.
Any na{\"i}ve attempt to generalize Ramsey's theorem to uncountable ordinals fails since, for example,  ${2}^{{\aleph}_{0}} \not\rightarrow {({\omega}_{1}, {\omega}_{1})}^{2}$ via the well-known Sierpinski partition.
Indeed for any cardinal $\kappa > \omega$, $\kappa \rightarrow {(\kappa, \kappa)}^{2}$ if and only if $\kappa$ is weakly compact.
So for smaller cardinals such as ${\omega}_{1}$ one investigates unbalanced  versions $\kappa\rightarrow(\kappa, \alpha)^2$ of this relation.
The first step in this direction was taken by Erd{\H o}s, Dushnik and Miller (see \cite{partitionbible}), who showed that for any cardinal $\kappa \geq \omega$, $\kappa \rightarrow {(\kappa, \omega)}^{2}$.
The following is a basic question guiding research in this area and lies behind most of the work in this paper.
\begin{Question} \label{q:partition1}
  To what extent can the Erd{\H o}s-Dushnik-Miller theorem be improved?
\end{Question}
Erd{\H o}s and Rado (see \cite{partitionbible}) showed that if $\kappa > \omega$ and $\kappa$ is a regular cardinal, then $\kappa \rightarrow {(\kappa, \omega + 1)}^{2}$, providing the first improvement to the Erd{\H o}s- Dushnik-Miller theorem.
The question of whether this theorem of Erd{\H o}s and Rado can be further improved by increasing the ordinal in the second alternative from $\omega + 1$ to $\omega + 2$ has a surprising answer even in the case when $\kappa = {\omega}_{1}$.
Todorcevic~\cite{partition} showed that if $\b = {\omega}_{1}$, then ${\omega}_{1} \not\rightarrow {({\omega}_{1}, \omega + 2)}^{2}$, while $\PFA$ implies that for all $\alpha < {\omega}_{1}$, ${\omega}_{1} \rightarrow {({\omega}_{1}, \alpha)}^{2}$ (see \cite{forcingaxioms}). 
Here $\b$ is the bounding number.
\begin{Def} \label{def:cards1}
  For functions $f, g \in \BS$, $f \; {\leq}^{\ast} \; g$ means $\exists n \in \omega \forall m \geq n \[f(m) \leq g(m)\]$.
  A set $X \subset \BS$ is \emph{unbounded} if there is no $f \in \BS$ such that $\forall g \in X \[g \; {\leq}^{\ast} \; f\]$. 
  $\b$ is the least size of an unbounded subset of $\BS$.
\end{Def}
Under $\PFA$, $\b = {2}^{{\aleph}_{0}} = {\aleph}_{2}$, and Laver~\cite{lav75} showed that $\MA + {2}^{{\aleph}_{0}} = {\aleph}_{2}$ implies ${\omega}_{2} \not\rightarrow {\left( {\omega}_{2}, \omega + 2 \right)}^{2}$ (see also Chapter 13 of \cite{forcingaxioms}).
In light of these results Todorcevic posed the following question in the late 80s.
\begin{Question} \label{q:partition2}
 Is it consistent to have $\b \rightarrow {\left( \b, \alpha \right)}^{2}$, for all $\alpha < {\omega}_{1}$? 
 Can this hold when $\b = {2}^{{\aleph}_{0}}$?
\end{Question}
Another way to pose this question is to ask whether the relation $\b \not\rightarrow {\left(\b, \omega + 2 \right)}^{2}$ is provable without any further assumptions on $\b$.
We give a positive answer to Question \ref{q:partition2} in this paper assuming the consistency of large cardinals.
In Section \ref{sec:measurable} we are able to show the consistency of $\MA(\sigma-\text{linked})$ with the partition relation ${2}^{{\aleph}_{0}} \rightarrow {\left({2}^{{\aleph}_{0}}, \alpha \right)}^{2}$, for all $\alpha < {\omega}_{1}$, starting from a measurable cardinal.

Erd{\H o}s and Rado actually established a stronger from of their above mentioned theorem for uncountable successor cardinals.
They showed (see \cite{partitionbible}) that if $\kappa \geq \omega$ is any cardinal, then ${\kappa}^{+} \rightarrow {\left( {\kappa}^{+}, {\log}_{\kappa}({\kappa}^{+}) + 1 \right)}^{2}$.
The notation ${\log}_{\kappa}({\kappa}^{+})$ is explained in Definition \ref{def:lambda}.
Question \ref{q:partition1} motivates the following problem.
\begin{Question} \label{q:partition3}
 Under what circumstances does the relation ${\kappa}^{+} \rightarrow {\left( {\kappa}^{+}, {\log}_{\kappa}({\kappa}^{+}) + 2 \right)}^{2}$ hold for an infinite cardinal $\kappa$?
 More generally, when does ${\kappa}^{+} \rightarrow {\left( {\kappa}^{+}, \alpha \right)}^{2}$, for all $\alpha < {\kappa}^{+}$?
\end{Question}
If ${2}^{\kappa} = {\kappa}^{+}$, then ${\kappa}^{+} \not\rightarrow {\left( {\kappa}^{+}, \kappa + 2 \right)}^{2}$ (see \cite{partitionsurvey}).
This result was first proved for regular $\kappa$ by Hajnal and then later generalized to all infinite $\kappa$ by Todorcevic.
$\GCH$ is equivalent to the statement that ${\log}_{\kappa}({\kappa}^{+}) = \cf(\kappa)$, for all $\kappa \geq \omega$.
So under $\GCH$, the relation ${\kappa}^{+} \rightarrow {\left( {\kappa}^{+}, {\log}_{\kappa}({\kappa}^{+}) + 2 \right)}^{2}$ fails for every infinite regular $\kappa$.
However this result does not address singular cardinals.
Observe that when $\kappa = {\aleph}_{\omega}$, ${\log}_{\kappa}({\kappa}^{+}) = \omega$, regardless of any assumptions about cardinal arithmetic.
In Section \ref{sec:suslincolor}, we give a further negative result pertaining to Question \ref{q:partition3}.
We show that if $\kappa \geq \omega$ is any cardinal such that there exists a ${\kappa}^{+}$--Suslin tree, then ${\kappa}^{+} \not\rightarrow {\left( {\kappa}^{+}, {\log}_{\kappa}({\kappa}^{+}) + 2 \right)}^{2}$.
In particular, to force the consistency of ${\aleph}_{\omega + 1} \rightarrow {\left( {\aleph}_{\omega + 1}, \omega + 2\right)}^{2}$, one would need to get rid of all ${\aleph}_{\omega + 1}$--Suslin trees.
It is known that this requires some fairly large cardinals.
This theorem is established via a negative partition relation for the comparable pairs of a ${\kappa}^{+}$--Suslin tree, which may be of independent interest.

In this paper, we also consider the following rectangular version of the ordinary partition relation.
\begin{Def} \label{def:inhomo}
  Let $\alpha, \beta, \gamma$, and $\delta$ be ordinals.
  $\alpha \rightarrow {\left(\beta, (\gamma:\delta)\right)}^{2}$ means the following: for any coloring $c: {\[\alpha\]}^{2} \rightarrow 2$ \emph{either}
  \begin{itemize}
   \item
   there is a set $X \subset \alpha$, such that $\otp(X) = \beta$ and ${\[X\]}^{2} \subset {K}_{0}$,
  \end{itemize}
\emph{or}
\begin{itemize}
 \item
 there exist sets $A, B \subset \alpha$ such that:
 \begin{enumerate}
  \item
  $\otp(A) = \gamma$ and $\otp(B) = \delta$;
  \item
  $\forall \zeta \in A \forall \xi \in B \[\zeta < \xi\]$;
  \item
  $\forall \zeta \in A \forall \xi \in B \[\{\zeta, \xi\} \in {K}_{1}\]$.
 \end{enumerate}
\end{itemize}
\end{Def}
The relation $\alpha \rightarrow {\left( \beta, \left(\gamma:\delta\right) \right)}^{2}$ appears weaker than the relation $\alpha \rightarrow {\left( \beta, \gamma + \delta \right)}^{2}$.
While the first alternative is the same for both of them, the second alternative of the rectangular partition relation asks for a homogeneous rectangle of length $\gamma$ and width $\delta$ where the second alternative of the ordinary partition relation asks for a homogeneous square with sides $\gamma + \delta$.
Getting a homogeneous rectangle is generally much easier than getting a homogeneous square.
Laver in fact proved the following stronger statement in his above mentioned paper \cite{lav75}: $\MA + {2}^{{\aleph}_{0}} = {\aleph}_{2}$ implies ${2}^{{\aleph}_{0}} \not\rightarrow {\left( {2}^{{\aleph}_{0}}, (\omega:2) \right)}^{2}$.
Baumgartner~\cite{bau76} proved that if $\GCH$ holds in $\V$ and if $\kappa$ is a regular cardinal in $\V$, then there is a cofinality preserving extension in which ${2}^{{\aleph}_{0}} = {\kappa}^{+}$ and ${2}^{{\aleph}_{0}} \not\rightarrow {\left( {2}^{{\aleph}_{0}}, (\omega:2) \right)}^{2}$.

It is not hard to see that if there is a $\kappa$-complete ideal $\II$ on $\kappa$ such that ${\II}^{+}$ has the c.\@c.\@c.\@, then $\kappa \rightarrow {\left(\kappa, (\omega:2) \right)}^{2}$.
On the other hand, Todorcevic showed in \cite{realpartition} that if $\kappa$ is any cardinal with $\cf(\kappa) > \omega$, then there is a c.\@c.\@c.\@ forcing extension in which $\kappa \not\rightarrow {(\kappa, \omega + 2)}^{2}$.
In particular if $\kappa$ is a measurable cardinal, then in this extension, there is a $\kappa$ complete ideal $\II$ on $\kappa$ with the property that ${\II}^{+}$ is c.\@c.\@c.
Thus he was able to prove that the relation ${2}^{{\aleph}_{0}} \rightarrow {\left( {2}^{{\aleph}_{0}}, (\omega:2) \right)}^{2}$ is consistently strictly weaker than the relation ${2}^{{\aleph}_{0}} \rightarrow {\left( {2}^{{\aleph}_{0}}, \omega + 2 \right)}^{2}$, given the consistency of a measurable cardinal.
Komj{\'a}th~\cite{kom98} produced a model where ${\omega}_{1} \rightarrow {({\omega}_{1}, (\omega:2))}^{2}$ while ${\omega}_{1} \not\rightarrow {({\omega}_{1}, \omega + 2)}^{2}$ without using any large cardinals.
In Section \ref{sec:suslincolor} of this paper, we generalize Komj{\'a}th's result to all successor cardinals.
For each $\kappa \geq \omega$, we produce a model in which ${\kappa}^{+} \rightarrow {({\kappa}^{+}, (\omega:2))}^{2}$ holds and ${\kappa}^{+} \rightarrow {({\kappa}^{+}, \omega + 2)}^{2}$ fails starting only with the consistency of $\ZFC$.
\section{Notation} \label{sec:notation}
We set out some notation that will be used throughout the paper in this section.
``$a \subset b$'' means $\forall x\[x \in a \implies x \in b\]$, so the symbol ``$\subset$'' does not denote proper subset.
$\c$ denotes ${2}^{{\aleph}_{0}}$.
For a set $X$, $\Pset(X)$ is the powerset of $X$.

An ideal $\II$ on a set $X$ is said to be  \emph{$\kappa$-complete} if for any $\delta < \kappa$ and any sequence $\seq{A}{\alpha}{<}{\delta}$ of elements of $\II$, ${\bigcup}_{\alpha < \delta}{{A}_{\alpha}} \in \II$.
${\II}^{+}$ denotes the collection of \emph{$\II$-positive sets} -- that is, ${\II}^{+} = \Pset(X) \setminus \II$.
When we think of ${\II}^{+}$ as a forcing notion it will be understood that the relation is $\subset$.
For a proper non-empty ideal $\II$ on $X$, forcing with $\pr{{\II}^{+}}{\subset}$ is equivalent to forcing with the Boolean algebra $\Pset(X) \slash \II$.
${\II}^{\ast}$ denotes the \emph{dual filter to $\II$} -- that is, ${\II}^{\ast} = \{X \setminus A: A \in \II\}$.

We will frequently make use of elementary submodels.
We will simply write ``$M \prec H{(\theta)}$'' to mean ``$M$ is an elementary submodel of $H{(\theta)}$, where $\theta$ is a regular cardinal that is large enough for the argument at hand''.

We will consider various forcing axioms and also the maximal fragments of these axioms that is consistent with existence of a Suslin tree on ${\omega}_{1}$. 
\begin{Def} \label{def:fa}
Let $\Gamma$ be a class of forcing notions and $\kappa$ a cardinal.
${\MA}_{\kappa}(\Gamma)$ is the following statement: If $\P \in \Gamma$ and $\seq{D}{\alpha}{<}{\kappa}$ is a sequence of dense subsets of $\P$, then there exists a filter $G \subset \P$ such that $\forall \alpha < \kappa \[{D}_{\alpha} \cap G \neq 0\]$.
$\MA(\Gamma)$ is the statement that $\forall \kappa < \c\[{\MA}_{\kappa}(\Gamma) \ \text{holds}\]$.
$\MA$ is $\MA(\Gamma)$ with $\Gamma$ equaling the class of c.\@c.\@c.\@ posets. 
\end{Def}
\begin{Def} \label{def:suslin}
 For a cardinal $\kappa \geq \omega$, $\Sa$ is a \emph{$\kappa$-Suslin tree} if $\Sa$ is a tree of size $\kappa$, but $\Sa$ has no chains or antichains of size $\kappa$.
 \end{Def}
 Given a $\kappa$-Suslin tree $\Sa$, ${\Sa}^{\[2\]}$ denotes $\{\{a, b\} \subset \Sa: a < b \}$.
 \begin{Def} \label{def:mas}
  Let $\Sa$ be an ${\omega}_{1}$-Suslin tree.
  ${\MA}(\Sa)$ is the following statement: if $\P$ is a poset with the property that $\P \times \Sa$ is c.\@c.\@c.\@ and if $\{{D}_{\alpha}: \alpha < \kappa\}$ is a collection of dense subsets of $\P$, where $\kappa$ is a cardinal less than $\c$, then there is a filter $G$ on $\P$ such that $\forall \alpha < \kappa\[G \cap {D}_{\alpha} \neq 0\]$.
 \end{Def}
 \section{A negative partition relation from a Suslin tree} \label{sec:suslincolor}
 In this section, we shed some light on Question \ref{q:partition3} by showing that ${\kappa}^{+} \not\rightarrow {\left( {\kappa}^{+}, {\log}_{\kappa}({\kappa}^{+}) + 2 \right)}^{2}$, if there is a ${\kappa}^{+}$-Suslin tree.
 The logarithmic operation is defined as follows.
 \begin{Def} \label{def:lambda}
 Let $\kappa \geq \omega$ be a cardinal. Define
 \begin{align*}
  {\log}_{\kappa}({\kappa}^{+}) = \min\{\lambda: {\kappa}^{\lambda} > \kappa\}.
 \end{align*}
\end{Def}
Note that ${\log}_{\kappa}({\kappa}^{+})$ is a regular cardinal greater than or equal to $\omega$ and that ${\log}_{\kappa}({\kappa}^{+}) \leq \cf(\kappa)$.
We also consider the following variation:
\begin{Def} \label{def:incfunctions}
 Let $\kappa \geq \omega$ and $\lambda \leq \kappa$ be cardinals.
 \begin{align*}
  {}^{\lambda \uparrow}{\kappa} = \{f: f \ \text{is a strictly increasing function from} \ \lambda \ \ \text{to} \ \kappa\}
 \end{align*}
\end{Def}
The following is easy to check.
We leave its proof to the reader.
\begin{Lemma} \label{lem:incequalsordinary}
Let $\kappa$ and $\lambda$ be cardinals with $\omega \leq \lambda \leq \cf(\kappa) \leq \kappa$.
Then $\lc {}^{\lambda \uparrow}{\kappa}\rc = {\kappa}^{\lambda}$.
In particular, if $\kappa \geq \omega$ is a cardinal and $\lambda = {\log}_{\kappa}({\kappa}^{+})$, then $\lc {}^{\lambda \uparrow}{\kappa} \rc = {\kappa}^{\lambda} > \kappa$.
\end{Lemma}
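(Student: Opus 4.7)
The upper bound $|{}^{\lambda\uparrow}\kappa| \leq \kappa^{\lambda}$ is immediate from ${}^{\lambda\uparrow}\kappa \subseteq {}^{\lambda}\kappa$, so the real task is to establish $|{}^{\lambda\uparrow}\kappa| \geq \kappa^{\lambda}$. My plan is to encode, for each $\mu < \kappa$, the arbitrary functions in ${}^{\lambda}\mu$ as strictly increasing functions into $\kappa$, and then invoke the hypothesis $\lambda \leq \cf(\kappa)$ to sweep up all of $\kappa^{\lambda}$ by letting $\mu$ range below $\kappa$.

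For the encoding, fix $\mu < \kappa$ and for each $f \in {}^{\lambda}\mu$ define $g_f: \lambda \to \kappa$ by $g_f(\alpha) = \mu \cdot \alpha + f(\alpha)$ using ordinal arithmetic. Since $\mu, \alpha < \kappa$ and $\kappa$ is a cardinal, the ordinal $\mu \cdot \alpha + f(\alpha)$ has cardinality less than $\kappa$, so $g_f(\alpha) < \kappa$. Because $f(\alpha) < \mu$, we have $g_f(\alpha) < \mu \cdot (\alpha + 1) \leq g_f(\alpha + 1)$; and at a limit $\alpha$, the chain $g_f(\beta) < \mu \cdot (\beta + 1) < \mu \cdot \alpha \leq g_f(\alpha)$ for $\beta < \alpha$ shows strict increase. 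Moreover $f$ is recoverable from $g_f$ by ordinal subtraction (since $\mu \cdot \alpha \leq g_f(\alpha) < \mu \cdot (\alpha + 1)$), so $f \mapsto g_f$ is injective and $|{}^{\lambda\uparrow}\kappa| \geq \mu^{\lambda}$ for every $\mu < \kappa$. Together with the trivial bound $|{}^{\lambda\uparrow}\kappa| \geq \kappa$ obtained from the shift maps $\alpha \mapsto \beta + \alpha$ for $\beta < \kappa$, this yields $|{}^{\lambda\uparrow}\kappa| \geq \kappa \cdot \sup_{\mu < \kappa} \mu^{\lambda}$.

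Finally, the hypothesis $\lambda \leq \cf(\kappa)$ forces every $f: \lambda \to \kappa$ to be bounded below $\kappa$, so $\kappa^{\lambda} = |\bigcup_{\mu < \kappa} {}^{\lambda}\mu| \leq \kappa \cdot \sup_{\mu < \kappa} \mu^{\lambda}$, closing the loop. The ``in particular'' clause then needs only that $\lambda = \log_{\kappa}(\kappa^{+})$ satisfies $\omega \leq \lambda \leq \cf(\kappa)$, both observations recorded in the paragraph following Definition \ref{def:lambda}; the upper estimate is a consequence of K\"onig's inequality $\kappa^{\cf(\kappa)} > \kappa$, and the strict inequality $\kappa^{\lambda} > \kappa$ is immediate from the definition of $\log_{\kappa}(\kappa^{+})$. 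There is no serious obstacle here, only the usual vigilance about ordinal versus cardinal arithmetic in verifying the encoding.
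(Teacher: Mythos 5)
The paper itself offers no proof of this lemma (it is explicitly left to the reader), so the only question is whether your argument is correct --- and it has a genuine gap in exactly the case that matters. Your encoding of bounded functions is fine: $f \mapsto g_f$ with $g_f(\alpha) = \mu\cdot\alpha + f(\alpha)$ is indeed an injection of ${}^{\lambda}\mu$ into ${}^{\lambda\uparrow}\kappa$. The problem is the final counting step. The assertion that ``the hypothesis $\lambda \leq \cf(\kappa)$ forces every $f\colon \lambda \to \kappa$ to be bounded below $\kappa$'' is false when $\lambda = \cf(\kappa)$: a cofinal map $\cf(\kappa) \to \kappa$ is unbounded. Consequently ${}^{\lambda}\kappa \neq \bigcup_{\mu<\kappa}{}^{\lambda}\mu$ and the inequality $\kappa^{\lambda} \leq \kappa\cdot\sup_{\mu<\kappa}\mu^{\lambda}$ can genuinely fail there: for $\kappa = \aleph_{\omega}$, $\lambda = \omega$ under $\GCH$ one has $\sup_{n}\aleph_{n}^{\aleph_{0}} = \aleph_{\omega}$ while $\aleph_{\omega}^{\aleph_{0}} = \aleph_{\omega+1}$, so the bounded functions undercount. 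This is not a removable edge case: the lemma is applied with $\lambda = {\log}_{\kappa}({\kappa}^{+})$, and the paper notes that under $\GCH$ this equals $\cf(\kappa)$ for every $\kappa$, so $\lambda = \cf(\kappa)$ is precisely the situation the lemma must cover.

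The fix is to inject all of ${}^{\lambda}\kappa$, not just the bounded functions, into ${}^{\lambda\uparrow}\kappa$. For $f \in {}^{\lambda}\kappa$ set $g_f(\alpha) = \sum_{\beta\leq\alpha}\left(f(\beta)+1\right)$ (ordinal sum), or equivalently define recursively $g_f(\alpha) = \left(\sup_{\beta<\alpha}\left(g_f(\beta)+1\right)\right) + f(\alpha)$. This is strictly increasing since every summand is positive, and $f$ is recovered by ordinal subtraction of $\sup_{\beta<\alpha}\left(g_f(\beta)+1\right)$ from $g_f(\alpha)$, so the map is injective. The hypothesis $\lambda \leq \cf(\kappa)$ enters in showing $g_f(\alpha) < \kappa$: for $\alpha < \lambda$ the set $\{f(\beta) : \beta \leq \alpha\}$ has size at most $|\alpha|+1 < \cf(\kappa)$ (here regularity of $\cf(\kappa)$ and $\alpha < \lambda \leq \cf(\kappa)$ are used), hence is bounded by some $\nu < \kappa$, and then $g_f(\alpha)$ is a sum of fewer than $\cf(\kappa)$ ordinals below $\nu+1$, so it has cardinality less than $\kappa$ and is therefore below $\kappa$. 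With this replacement the rest of your write-up, including the ``in particular'' clause, goes through.
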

Our negative partition result is obtained by considering colorings of the pairs of comparable elements of a Suslin tree.
Partition relations involving such colorings were studied by M{\'a}t{\'e}~\cite{matesuslin}, and Todorcevic~\cite{posetpartition} proved similar partition relations for more general partial orders.
The coloring shown to exist in Theorem \ref{thm:coloring} was originally discovered in the special case when ${\kappa}^{+} = {\omega}_{1}$ and exposed in Theorem 28 of \cite{suslincombdic}.  
\begin{Lemma} \label{lem:main2}
 Let $\kappa \geq \omega$ be a cardinal.
 Let $\theta$ be a sufficiently large regular cardinal and fix $x \in H(\theta)$.
 There is $M \prec H(\theta)$ such that
  \begin{enumerate}
   \item
   $\kappa \cup \{x\} \subset M$ and $\lc M \rc = \kappa$;
   \item
   For each $\alpha < {\log}_{\kappa}({\kappa}^{+})$, ${}^{\alpha}{M} \subset M$.
  \end{enumerate}
\end{Lemma}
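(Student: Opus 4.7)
The plan is to construct $M$ as the union of a continuous $\prec$-increasing chain $\langle M_\xi : \xi \leq \lambda \rangle$ of length $\lambda := {\log}_{\kappa}({\kappa}^{+})$ of elementary submodels of $H(\theta)$, each of cardinality $\kappa$, in such a way that every sequence of length $<\lambda$ from $M_\xi$ is a member of $M_{\xi+1}$. Recall from the remark following Definition \ref{def:lambda} that $\lambda$ is regular with $\omega \leq \lambda \leq \cf(\kappa)$, so in particular $\lambda \leq \kappa$.

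The crucial cardinal-arithmetic input for the construction is that $\kappa^{<\lambda} = \kappa$. Indeed, for each $\alpha < \lambda$ the minimality in Definition \ref{def:lambda} forces $\kappa^\alpha \leq \kappa$, and the reverse inequality is trivial whenever $\alpha \geq 1$. Consequently, for any $Y \in {\[H(\theta)\]}^{\kappa}$, the set ${}^{<\lambda}Y$ of all sequences of length $<\lambda$ from $Y$ satisfies $\lc{}^{<\lambda}Y\rc = \sum_{\alpha < \lambda}\kappa^{\alpha} = \lambda\cdot\kappa = \kappa$.

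I would then recurse as follows. For $M_0$ take any elementary submodel of $H(\theta)$ of size $\kappa$ containing $\kappa \cup \{x\}$, as provided by the downward L\"owenheim--Skolem theorem. Given $M_\xi$, use the estimate above to pick $M_{\xi+1} \prec H(\theta)$ of size $\kappa$ satisfying $M_\xi \cup {}^{<\lambda}M_\xi \subset M_{\xi+1}$. At limit stages $\eta \leq \lambda$ set $M_\eta = \bigcup_{\xi < \eta}M_\xi$; elementarity is preserved under unions of elementary chains, and each $M_\eta$ still has cardinality $\kappa$ because $\eta \leq \lambda \leq \kappa$. Finally let $M := M_\lambda$.

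Clause (1) is immediate from the construction. For clause (2), fix $\alpha < \lambda$ and a function $f \colon \alpha \to M$. Since $M = \bigcup_{\xi < \lambda}M_\xi$, the regularity of $\lambda$ together with $\alpha < \lambda$ forces $\ran(f) \subset M_\xi$ for some $\xi < \lambda$, whence $f \in {}^{\alpha}M_\xi \subset M_{\xi+1} \subset M$, as required. The only substantive ingredient in the whole argument is the arithmetic identity $\kappa^{<\lambda} = \kappa$, which is read directly off the definition of ${\log}_{\kappa}({\kappa}^{+})$; everything else is routine elementary-submodel bookkeeping, so no step presents a real obstacle.
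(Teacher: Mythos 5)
Your proof is correct and follows essentially the same route as the paper's: a $\lambda$-chain of elementary submodels of size $\kappa$, each successor stage absorbing the short sequences from the previous stage (justified by $\kappa^{\lc\alpha\rc}\leq\kappa$ for $\alpha<\lambda$, read off the minimality in the definition of ${\log}_{\kappa}({\kappa}^{+})$), with the final closure under $<\lambda$-sequences obtained from the regularity of $\lambda$. The only cosmetic difference is that you close $M_{\xi+1}$ under all of ${}^{<\lambda}M_\xi$ at once, whereas the paper only adds ${}^{\alpha}{M}_{\xi}$ for $\alpha\leq\xi$ and lets regularity do slightly more work at the end; both are fine.
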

\begin{proof}
For ease of notation, write $\lambda = {\log}_{\kappa}({\kappa}^{+})$.
 Build a sequence $\langle {M}_{\xi}: \xi < \lambda \rangle$ with the following properties:
 \begin{enumerate}
  \item[(3)]
  $x \in {M}_{0}$ and for each $\xi < \lambda$, ${M}_{\xi} \prec H(\theta)$ with $\kappa \subset {M}_{\xi}$ and $\lc {M}_{\xi} \rc = \kappa$;  
  \item[(4)]
  for all $\zeta < \xi < \lambda$, ${M}_{\zeta} \subset {M}_{\xi}$;
  \item[(5)]
  for each $\alpha < \lambda$, $\forall \lambda > \beta \geq \alpha \[{}^{\alpha}{{M}_{\beta}} \subset {M}_{\beta + 1}\]$.
 \end{enumerate}
Suppose that such a sequence has been built.
Put $M = {\bigcup}_{\xi < \lambda}{{M}_{\xi}}$.
By (3) and (4), $M \prec H(\theta)$ and $\kappa \cup \{x\} \subset M$.
Since $\lambda \leq \kappa$, $\lc M \rc = \kappa$.
Let $\alpha < \lambda$ and fix $f: \alpha \rightarrow M$.
For each $\eta < \alpha$ there is ${\xi}_{\eta} < \lambda$ such that $f(\eta) \in {M}_{{\xi}_{\eta}}$.
By the regularity of $\lambda$, there is $\alpha \leq \xi < \lambda$ such that $\forall \eta < \alpha \[{\xi}_{\eta} \leq \xi\]$.
Therefore, $f \in {}^{\alpha}{{M}_{\xi}} \subset {M}_{\xi + 1} \subset M$.
Thus (1) and (2) hold.

To build such a sequence, proceed as follows.
${M}_{0}$ can be an arbitrary elementary submodel of $H(\theta)$ containing $\kappa \cup \{x\}$ of size $\kappa$.
If $\alpha < \lambda$ and ${M}_{\alpha}$ is given, then note that by the definition of $\lambda$, for all $\xi \leq \alpha$, $\lc {}^{\xi}{{M}_{\alpha}} \rc \leq \kappa$.
Therefore, it is possible to find ${M}_{\alpha + 1} \prec H(\theta)$ such that $\lc {M}_{\alpha + 1} \rc = \kappa$ and ${M}_{\alpha} \cup \left( {\bigcup}_{\xi \leq \alpha}{{}^{\xi}{{M}_{\alpha}}} \right) \subset {M}_{\alpha + 1}$.
When $\alpha < \lambda$ is a limit ordinal, put ${M}_{\alpha} = {\bigcup}_{\xi < \alpha}{{M}_{\xi}}$.
It is clear that this works.
\end{proof}
\begin{Lemma} \label{lem:main1}
Let $\kappa \geq \omega$ be a cardinal and suppose that $\Sa$ is a ${\kappa}^{+}$-Suslin tree.
Let $\theta$ be a sufficiently large regular cardinal.
Fix $M \prec H(\theta)$ such that $\kappa \subset M$, $\lc M \rc = \kappa$, and $M$ contains all the relevant objects.
Put $\delta = M \cap {\kappa}^{+}$.
Let $L \subset \Sa$ be such that $\{\hgt(s): s \in M \cap L\}$ is unbounded in $\delta$.
Let $D \subset \Sa$ with $D \in M$.
Assume that $\forall s \in L \cap M \exists t \in D \[t \geq s\]$.
Then there is $s \in L \cap M$ such that $D$ is dense above $s$ in $\Sa$.
Moreover, if there is $t \in \Sa$ such that $L = \pred(t)$, then $\{\hgt(s): s \in L \cap M \cap D\}$ is unbounded in $\delta$.
\end{Lemma}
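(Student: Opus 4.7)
The plan is to prove the main conclusion by contradiction, exploiting the Suslin property to trap every obstruction to density inside a small antichain that lies in $M$, and then to deduce the moreover clause by a short elementarity plus tree-comparability argument. So assume toward a contradiction that no $s \in L \cap M$ has $D$ dense above it. Set $D^{\ast} = \{u \in \Sa : \forall v \geq u,\ v \notin D\}$; by elementarity $D^{\ast} \in M$, and $D^{\ast}$ is upward closed. Because the predecessors of any node form a well-order, each $u \in D^{\ast}$ sits above a $\leq$-least element of $D^{\ast}$, so the set $A$ of minimal elements of $D^{\ast}$ is an antichain of $\Sa$; the Suslin hypothesis thus gives $|A| \leq \kappa$. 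Since $A \in M$, $|A| \leq \kappa$, and $\kappa \subset M$, elementarity produces in $M$ a surjection from $\kappa$ onto $A$, whence $A \subset M$ and $\hgt(a) \in M \cap {\kappa}^{+} = \delta$ for every $a \in A$.

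Now fix any $s \in L \cap M$. The failure of density yields $u \geq s$ with $u \in D^{\ast}$, and $u$ sits above some $a \in A$. Since $s, a \leq u$ in the tree, $s$ and $a$ are comparable; if $a \leq s$ then upward closure puts $s \in D^{\ast}$, contradicting the hypothesis that some element of $D$ lies above $s$. Hence $s < a$ and $\hgt(s) < \hgt(a)$. Put $\gamma = \sup\{\hgt(a) : a \in A\}$. The sup is definable from $A$ and the height function (both in $M$), so $\gamma \in M$; the regularity of ${\kappa}^{+}$ together with $|A| \leq \kappa$ gives $\gamma < {\kappa}^{+}$. Thus $\gamma \in M \cap {\kappa}^{+} = \delta$, and therefore $\{\hgt(s) : s \in L \cap M\}$ is bounded in $\delta$ by $\gamma$, contradicting the hypothesis.

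For the moreover clause, suppose $L = \pred(t)$. Every $s \in L$ satisfies $\hgt(s) < \hgt(t)$, so the unboundedness of $\{\hgt(s) : s \in L \cap M\}$ in $\delta$ forces $\hgt(t) \geq \delta$. Given $\alpha < \delta$, choose $s \in L \cap M$ with $\hgt(s) > \alpha$; by the hypothesis together with elementarity, some $t' \in D \cap M$ satisfies $t' \geq s$. Then $\hgt(t') < \delta \leq \hgt(t)$, while $t'$ and $t$ both lie above $s$ and are hence comparable in the tree, forcing $t' < t$, that is, $t' \in L$. Therefore $t' \in L \cap M \cap D$ with $\hgt(t') \geq \hgt(s) > \alpha$. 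The delicate point of the whole argument is verifying $\gamma \in M$ and $\gamma < \delta$, which rests on the regularity of ${\kappa}^{+}$ and the inclusion $A \subset M$; everything else reduces to Suslin-ness, elementarity, and the standard fact that tree nodes with a common upper bound are comparable.
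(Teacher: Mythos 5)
Your proof of the first assertion is correct and is essentially the argument in the paper: your set $D^{\ast}$ is the paper's $E=\{x\in\Sa:\cone_{D}(x)=0\}$, and your antichain of minimal elements of the upward-closed set $D^{\ast}$ plays exactly the role of the paper's maximal antichain $A\subset E$; the passage from $\lc A\rc\le\kappa$, $A\in M$, $\kappa\subset M$ to $A\subset M$, and thence to a bound $\gamma\in M\cap\kappa^{+}=\delta$ on the relevant heights, is also the intended reasoning (the paper runs this directly rather than by contradiction, but that is cosmetic).

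The ``moreover'' clause, however, contains a genuine error. You choose $t'\in D\cap M$ with $t'\geq s$ and then assert that $t'$ and $t$ ``both lie above $s$ and are hence comparable in the tree, forcing $t'<t$.'' In a tree it is nodes with a common \emph{upper} bound that must be comparable, since the predecessors of any node are linearly ordered --- a fact you yourself state correctly in your closing sentence; two nodes above a common \emph{lower} bound can perfectly well lie on different branches. So the witness $t'$ produced by elementarity may branch away from $t$ above $s$, in which case $t'\notin\pred(t)=L$ and your argument yields nothing in $L\cap M\cap D$. This is precisely the difficulty the paper's proof is built to overcome: it first uses the already-proved density statement to fix $x\in L\cap M$ with $D$ dense above $x$, takes a maximal antichain $A\in M$ inside $B=\{y\in\cone_{D}(x):\hgt(y)>\alpha\}$, bounds the heights of $A$ below some $\beta<\delta$, and then picks $t''\in L\cap M$ of height above $\max\{\alpha,\beta,\hgt(x)\}$; any $y\in D$ with $y\geq t''$ lies in $B$ and hence above some $a\in A$, and now $a$ and $t''$ have the common upper bound $y$, so the height comparison forces $a<t''<t$, putting $a\in L\cap M\cap D$ with $\hgt(a)>\alpha$. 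Some such device for forcing the element of $D$ to sit \emph{below} a high element of $L$ is indispensable, and it is missing from your proof.
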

\begin{proof}
 Put $E = \{x \in \Sa: {\cone}_{D}(x) = 0\}$.
  $E \in M$.
  So there exists $A \in M$ such that $A \subset E$, $A$ is an antichain, and $A$ is maximal with respect to these two properties.
  As $A$ has size at most $\kappa$, find $\alpha < \delta$ such that $A \subset {\Sa}_{< \alpha}$.
  Let $x \in L \cap M$ be such that $\hgt(x) \geq \alpha$.
  If $D$ is not dense above $x$ in $\Sa$, then there is $s \in \Sa$ such that $s \geq x$ and ${\cone}_{D}(s) = 0$.
  Thus $s \in E$ and is comparable to some $a \in A$.
  It follows that $a \leq x$.
  However, by hypothesis, there is $y \in D$ with $x \leq y$.
  $y \in {\cone}_{D}(a)$, contradicting $a \in E$.

  For the second statement assume that $L = {\pred}_{\Sa}(s)$ for some $s \in \Sa$, and fix $\alpha < \delta$.
  By the first statement, fix $x \in L \cap M$ such that $D$ is dense above $x$ in $\Sa$.
  Note that ${\cone}_{D}(x) \in M$ and that it is a set of size ${\kappa}^{+}$.
  Put $B = \{y \in {\cone}_{D}(x): \hgt(y) > \alpha \} \in M$.
  Choose $A \in M$ such that $A \subset B$, $A$ is an antichain, and $A$ is maximal with respect to these two properties.
  As $A$ has size at most $\kappa$, fix $\beta < \delta$ such that $A \subset {\Sa}_{< \beta}$.
  Fix $t \in L \cap M$ with $\hgt(t) > \max\{\alpha, \beta, \hgt(x)\}$.
  Thus $t \geq x$ and there is $y \in D$ with $y \geq t$.
  Since $y \in B$, there is $a \in A$ such that $a \leq y$.
  It follows that $a \leq t \leq s$.
  Therefore, $a \in L \cap D \cap M$ and $\hgt(a) > \alpha$.
\end{proof}
\begin{Theorem} \label{thm:coloring}
 Let $\kappa \geq \omega$ be a cardinal.
 Assume that $\Sa$ is a ${\kappa}^{+}$-Suslin tree.
 Furthermore, assume that for each $s \in \Sa$, $\succc(s) = \{t \in \Sa: t \geq s \ \text{and} \ \hgt(t) = \hgt(s) + 1\}$ has size exactly equal to $\kappa$ and also that $\lc \{t \in \Sa: t \geq s \}\rc = {\kappa}^{+}$.
 Then there is a coloring $c: {\Sa}^{\[2\]} \rightarrow 2$ such that
 \begin{enumerate}
  \item
  There is no $X \subset \Sa$ such that $\lc X \rc = {\kappa}^{+}$ and ${X}^{\[2\]} \subset {K}_{0}$;
  \item
  There is no $s \in \Sa$ and $B \subset \pred(s)$ such that $\otp(B) = {\log}_{\kappa}({\kappa}^{+}) + 2$ and ${B}^{\[2\]} \subset {K}_{1}$.
 \end{enumerate}
\end{Theorem}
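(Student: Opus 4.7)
Write $\lambda = \log_\kappa(\kappa^+)$; Lemma~\ref{lem:incequalsordinary} then gives $\kappa^\lambda > \kappa \geq \kappa^{<\lambda}$. For each $a \in \Sa$ I would fix a bijection $\psi_a : \succc(a) \to \kappa$, which produces, for any pair $a < b$ in $\Sa$, a local label $\chi(a,b) := \psi_a(a^+_b) \in \kappa$, where $a^+_b$ is the unique immediate successor of $a$ lying below $b$. For each $b \in \Sa$ I would also fix a ``skeleton'' $\sigma_b \in {}^{\lambda\uparrow}\hgt(b)$---strictly increasing and cofinal in $\hgt(b)$ when $\cf(\hgt(b)) = \lambda$, and a canonical strictly increasing sequence otherwise. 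Writing $b \restrict \gamma$ for the predecessor of $b$ at height $\gamma$, I would define $c(\{a,b\}) = 0$ exactly when $a = b \restrict \sigma_b(\xi)$ for some $\xi < \lambda$, possibly coupled with a compatibility requirement on $\chi(a,b)$.

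\textbf{Proof of (2).} Suppose $B \subset \pred(s)$ has $\otp(B) = \lambda + 2$ and $B^{\[2\]} \subset K_1$; enumerate $B = \{b_\xi : \xi < \lambda + 2\}$ in tree order. The subchain $\{b_\xi : \xi < \lambda\}$ sits cofinally below $b_\lambda$, and $K_1$-homogeneity forces every such $b_\xi$ to avoid both the skeleton of $b_\lambda$ and the skeleton of $b_{\lambda+1}$. Since $\{b_\xi : \xi < \lambda\}$ is itself a canonical cofinal $\lambda$-sequence below $b_\lambda$, comparing it with $\sigma_{b_\lambda}$ and applying a pigeonhole (on the $\chi$-labels if needed) should force at least one $b_\xi$ to land on the skeleton of $b_\lambda$, yielding the required color-$0$ pair and the desired contradiction.

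\textbf{Proof of (1).} Suppose $X \subset \Sa$ has $\lc X \rc = \kappa^+$ and $X^{\[2\]} \subset K_0$. Apply Lemma~\ref{lem:main2} to obtain $M \prec H(\theta)$ of size $\kappa$ containing $X$, $\Sa$, $\psi$, $\sigma$, with $\kappa \subset M$ and ${}^\alpha M \subset M$ for every $\alpha < \lambda$; set $\delta = M \cap \kappa^+$. Elementarity and $\lc X \rc = \kappa^+$ give that $\{\hgt(x) : x \in X \cap M\}$ is unbounded in $\delta$. Pick $b^\ast \in X$ with $\hgt(b^\ast) \geq \delta$ and apply Lemma~\ref{lem:main1} to $L = X \cap \pred(b^\ast)$ and $D = L$, obtaining elements of $L \cap D \cap M$ of heights unbounded in $\delta$. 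Using the $<\lambda$-closure of $M$, assemble a strictly increasing $\lambda$-sequence inside $X \cap M$ converging to $\delta$. Since $\sigma_{b^\ast}$ and $\chi(\cdot, b^\ast) \restrict \delta$ cannot lie in $M$ (by $\kappa^\lambda > \kappa$), a diagonal argument should produce some pair $a < b$ in $X$ with $c(\{a,b\}) = 1$, contradicting $X^{\[2\]} \subset K_0$.

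\textbf{Main obstacle.} The delicate point is the joint design of $c$: condition (2) demands that $c = 0$ appear in every chain of length $\lambda+2$, while (1) demands $c = 1$ inside every $\kappa^+$-sized set, and the threshold $\kappa^\lambda > \kappa \geq \kappa^{<\lambda}$ is precisely what distinguishes the two regimes. Tuning the skeletons $\sigma_b$ and labels $\chi(\cdot, b)$ so that Lemma~\ref{lem:main1} produces the correct diagonalization for (1), while a direct pigeonhole along cofinal $\lambda$-sequences handles (2), is where the real difficulty will lie.
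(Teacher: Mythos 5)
You have not actually produced a coloring: the definition of $c$ is left open (``possibly coupled with a compatibility requirement on $\chi(a,b)$''), and your closing paragraph concedes that the joint design of $c$ --- the entire content of the theorem --- is unresolved. Moreover, the skeleton idea as stated cannot be repaired into a proof of (2). First, for a chain $B=\{b_\xi:\xi<\lambda+2\}\subset\pred(s)$ the heights of the $b_\xi$, $\xi<\lambda$, need not be cofinal in $\hgt(b_\lambda)$ at all. Second, even when they are, two cofinal $\lambda$-sequences in an ordinal of cofinality $\lambda$ can be disjoint (evens versus odds in $\omega$), so no pigeonhole forces some $b_\xi$ to land on the range of $\sigma_{b_\lambda}$; the purported color-$0$ pair need not exist. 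Your sketch of (1) correctly identifies the relevant tools (Lemma \ref{lem:main2}, Lemma \ref{lem:main1}, a $<\lambda$-closed submodel $M$ with $\delta=M\cap\kappa^+$), but without a concrete coloring the ``diagonal argument'' has nothing to diagonalize against, and the assertion that $\sigma_{b^\ast}$ ``cannot lie in $M$'' is unjustified as stated.

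The missing idea is the paper's choice of coloring. One fixes $\kappa^+$ pairwise distinct functions $f_s\in{}^{\lambda\uparrow}\kappa$ (possible since $\lc{}^{\lambda\uparrow}\kappa\rc=\kappa^\lambda>\kappa$) and a $1$-$1$ enumeration $\{s^+_\beta:\beta<\kappa\}$ of $\succc(s)$ for each $s$, and declares $c(\{s,t\})=1$ for $s<t$ iff $f_t(\Delta(f_s,f_t))=\beta$, where $s^+_\beta\leq t$ and $\Delta$ is the first point of disagreement. For (1), given a $0$-homogeneous $X$ of size $\kappa^+$, one takes comparable $t<u$ in $X\setminus M$ and proves $f_t=f_u$ by induction on $\alpha<\lambda$: the set $D$ of successors $v={s}^{+}_{\zeta}$ coming from $s\in X$ with $f_s\restrict\alpha=\sigma$ and $f_s(\alpha)=f_u(\alpha)$ is definable in $M$ (this is where ${}^{\alpha}M\subset M$ enters), and Lemma \ref{lem:main1} produces such an $s$ below $t$ in $M$, whence $0$-homogeneity forces $f_t(\alpha)=f_u(\alpha)$; this contradicts distinctness of the $f_s$. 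For (2), one analyzes the functions $f_{s(\alpha)}$ along a putative $1$-homogeneous chain of type $\lambda+2$ using $\lambda\rightarrow(\lambda,\omega)^2$, extracts a limit function $f$ which is strictly increasing, and derives a contradiction from the two top elements together with the monotonicity of $f$. None of this structure is present in your proposal, so the gap is essential rather than expository.
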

\begin{proof}
Write $\lambda$ for ${\log}_{\kappa}({\kappa}^{+})$
 For $f, g \in {}^{\lambda}{\kappa}$, if $f \neq g$, let $\Delta(f, g)$ denote the least $\alpha < \lambda$ such that $f(\alpha) \neq g(\alpha)$.
 Choose a collection $\{{f}_{s}: s \in \Sa\}$ of ${\kappa}^{+}$-many pairwise distinct elements of ${}^{\lambda \uparrow}{\kappa}$.
 This is possible since by Lemma \ref{lem:incequalsordinary}, $\lc {}^{\lambda \uparrow}{\kappa}\rc = {\kappa}^{\lambda} > \kappa$.
 For each $s \in \Sa$, let $\{{s}^{+}_{\beta}: \beta \in \kappa\}$ be a 1-1 enumeration of $\succc(s)$.
 Now, define $c: {\Sa}^{\[2\]} \rightarrow 2$ as follows.
 For any pair $s, t \in \Sa$, if $s < t$, then there is a unique $\beta \in \kappa$ such that ${s}^{+}_{\beta} \leq t$.
 If ${f}_{t}(\Delta({f}_{s}, {f}_{t})) = \beta$, then set $c(\{s, t\}) = 1$.
 Otherwise $c(\{s, t\}) = 0$.
 The first claim will establish (1).
 \begin{Claim} \label{claim:coloring1}
  There is no $X \in {\[\Sa\]}^{{\kappa}^{+}}$ such that ${X}^{\[2\]} \subset {K}_{0}$.
 \end{Claim}
 \begin{proof}
 Suppose not.
 Fix a counterexample $X$.
 Let $\theta$ be a sufficiently large regular cardinal.
 Let $M \prec H(\theta)$ be of size $\kappa$ such that $\forall \alpha < \lambda \[{}^{\alpha}{M} \subset M\]$ and $\kappa \cup \{ \langle \Sa, {<}_{\Sa} \rangle, \langle {f}_{s}: s \in \Sa \rangle, \langle \langle {s}^{+}_{\beta}: \beta \in \kappa \rangle: s \in \Sa \rangle, c, X \} \subset M$.
 $X \setminus M$ is a subset of $\Sa$ of size ${\kappa}^{+}$.
 So it is not an antichain.
 Fix $t, u \in X \setminus M$ such that $t < u$.
 We will get a contradiction if we can show that ${f}_{t} = {f}_{u}$.
 To this end, fix $\alpha < \lambda$ and assume that ${f}_{t}(\xi) = {f}_{u}(\xi)$, for all $\xi < \alpha$.
 Let $\sigma = {f}_{t} \restrict \alpha = {f}_{u} \restrict \alpha$.
 By the closure properties of $M$, $\sigma \in M$.
 Consider any $s \in {\pred}_{X}(t) \cap M$.
 Let $\beta(s)$ denote the unique $\beta \in \kappa$ such that ${s}^{+}_{\beta} \leq t$.
 If $\sigma = {f}_{s} \restrict \alpha$ and if $\beta(s) = {f}_{t}(\alpha)$, then since $c(\{s, t\}) = 0$, it follows that ${f}_{s}(\alpha) = {f}_{t}(\alpha)$.
 Put $\zeta = {f}_{t}(\alpha)$ and ${\zeta}^{\ast} = {f}_{u}(\alpha)$.
 Let
 \begin{align*}
  D = \left\{v \in \Sa: \exists s \in X \[{f}_{s} \restrict \alpha = \sigma \ \text{and} \ {f}_{s}(\alpha) = {\zeta}^{\ast} \ \text{and} \ {s}^{+}_{\zeta} = v\] \right\}.
 \end{align*}
 It is easy to see that $D \in M$.
 Let $L = \pred(t)$.
 Note that ${u}^{+}_{\zeta} \in D$.
 Therefore, $\forall y \in L \cap M \exists v \in D\[y \leq v\]$.
 So by Lemma \ref{lem:main1}, we can find $v \in L \cap M \cap D$.
 Let $s \in X$ be such that ${f}_{s} \restrict \alpha = \sigma$, ${f}_{s}(\alpha) = {\zeta}^{\ast}$ and ${s}^{+}_{\zeta} = v$.
 Then $s < t$ and $s \in M \cap {\pred}_{X}(t)$.
 Since ${s}^{+}_{\zeta} = v \leq t$, $\beta(s) = \zeta = {f}_{t}(\alpha)$.
 Thus ${f}_{u}(\alpha) = {\zeta}^{\ast} = {f}_{s}(\alpha) = {f}_{t}(\alpha)$.
 So by induction on $\alpha \in \lambda$, $\forall \alpha \in \lambda \[{f}_{u}(\alpha) = {f}_{t}(\alpha)\]$, which is a contradiction.
 \end{proof}
 We next work toward showing that (2) holds.
 We need a few preliminary claims.
 Aiming for a contradiction, fix $s \in \Sa$ and $B \subset \pred(s)$ such that $\otp(B) = \lambda + 2$ and ${B}^{\[2\]} \subset {K}_{1}$.
 For each $\alpha < \lambda + 2$, let $s(\alpha)$ denote the $\alpha$-th element of $B$, and for each $\alpha < \lambda$, let $\beta(\alpha)$ be the unique $\beta < \kappa$ such that ${\left( s(\alpha) \right)}^{+}_{\beta} \in \pred(s)$.
 Using the partition relation $\lambda \rightarrow {\left( \lambda, \omega \right)}^{2}$ and the regularity of $\lambda$, we conclude that there is $A \in {\[\lambda\]}^{\lambda}$ such that:
 \begin{align*}
& \text{\emph{either} there exists} \ \beta < \kappa \ \text{such that for all} \ \alpha \in A, \beta(\alpha) = \beta \\
& \text{\emph{or} for all} \ \xi, \alpha \in A \ \text{if} \ \xi < \alpha, \ \text{then} \ \beta(\xi) < \beta(\alpha).
 \end{align*}
\begin{Claim} \label{claim:coloring2}
 For each $\sigma \in {\kappa}^{< \lambda}$, there is no set $X \in {\[\kappa\]}^{\lambda}$ such that $\forall \gamma \in X \exists {\alpha}_{\gamma} \in A\[{f}_{s\left({\alpha}_{\gamma}\right)} \supset {\sigma}^{\frown}{\langle \gamma \rangle}\]$.  
 \end{Claim}
 \begin{proof}
 Suppose not.
 Note that for all ${\gamma}_{0}, {\gamma}_{1} \in X$, if ${\alpha}_{{\gamma}_{0}} = {\alpha}_{{\gamma}_{1}}$, then ${\gamma}_{0} = {\gamma}_{1}$.
 Therefore, applying again the partition relation $\lambda \rightarrow {\left( \lambda, \omega \right)}^{2}$ and the regularity of $\lambda$, there is $Y \in {\[X\]}^{\lambda}$ such that $\forall {\gamma}_{0}, {\gamma}_{1} \in Y \[{\gamma}_{0} < {\gamma}_{1} \implies {\alpha}_{{\gamma}_{0}} < {\alpha}_{{\gamma}_{1}}\]$.
 Fix ${\gamma}_{0} < {\gamma}_{1} < {\gamma}_{2}$ in $Y$.
 Then $s\left({\alpha}_{{\gamma}_{0}}\right) < s\left({\alpha}_{{\gamma}_{1}}\right) < s\left({\alpha}_{{\gamma}_{2}}\right)$ are members of $B$.
 As $c\left( \left\{ s\left({\alpha}_{{\gamma}_{0}}\right), s\left({\alpha}_{{\gamma}_{1}}\right) \right\} \right) = 1$ and as ${f}_{s\left( {\alpha}_{{\gamma}_{0}} \right)} \supset {\sigma}^{\frown}{\langle {\gamma}_{0} \rangle}$ and ${f}_{s\left( {\alpha}_{{\gamma}_{1}} \right)} \supset {\sigma}^{\frown}{\langle {\gamma}_{1} \rangle}$, $\beta({\alpha}_{{\gamma}_{0}}) = {\gamma}_{1}$.
 Similarly, $\beta({\alpha}_{{\gamma}_{0}}) = {\gamma}_{2}$.
 This a contradiction.
\end{proof}
For each $\sigma \in {\kappa}^{< \lambda}$, let ${A}_{\sigma} = \{\alpha \in A: \sigma \subset {f}_{s(\alpha)}\}$.
\begin{Claim} \label{claim:coloring3}
 Let $\sigma, \tau \in {\kappa}^{< \lambda}$ such that both ${A}_{\sigma}$ and ${A}_{\tau}$ have size $\lambda$.
 Then $\sigma$ and $\tau$ are comparable.
\end{Claim}
\begin{proof}
Suppose not.
Let $\delta$ be the least ordinal less than $\min\{\dom(\sigma), \dom(\tau)\}$ such that $\sigma(\delta) \neq \tau(\delta)$.
Consider any $\alpha \in {A}_{\sigma}$.
There is ${\alpha}' \in {A}_{\tau}$ with $\alpha < {\alpha}'$.
Since $c\left( \left\{s(\alpha), s({\alpha}') \right\} \right) = 1$ and since $\sigma \subset {f}_{s(\alpha)}$ and $\tau \subset {f}_{s({\alpha}')}$, it follows that $\tau(\delta) = \beta(\alpha)$.
Thus we have $\forall \alpha \in {A}_{\sigma} \[\beta(\alpha) = \tau(\delta)\]$.
By a similar reasoning, we get $\forall \alpha \in {A}_{\tau}\[\beta(\alpha) = \sigma(\delta)\]$.
However, this contradicts the way $A$ was chosen.
\end{proof}
\begin{Claim} \label{claim:coloring4}
 For each $\delta < \lambda$, there is a unique $\sigma \in {\kappa}^{\delta}$ such that $\lc A \setminus {A}_{\sigma} \rc < \lambda$.
\end{Claim}
\begin{proof}
 The uniqueness of $\sigma$ follows from Claim \ref{claim:coloring3}.
 To prove existence, we proceed by induction on $\delta$.
 If $\delta = 0$, then there is nothing to prove.
 Suppose that $\delta < \lambda$ and $\sigma \in {\kappa}^{\delta}$ has the property that $\lc A \setminus {A}_{\sigma} \rc < \lambda$.
 Clearly, $\lc {A}_{\sigma} \rc = \lambda$ and ${A}_{\sigma} = {\bigcup}_{\gamma \in \kappa}{{A}_{{\sigma}^{\frown}{\langle \gamma \rangle}}}$.
 By Claim \ref{claim:coloring2}, $X = \left\{\gamma \in \kappa: {A}_{{\sigma}^{\frown}{\langle \gamma \rangle}} \neq 0 \right\}$ has size less than $\lambda$.
 So by the regularity of $\lambda$, there must be $\gamma \in X$ such that $\lc {A}_{{\sigma}^{\frown}{\langle \gamma \rangle}} \rc = \lambda$.
 By Claim \ref{claim:coloring3}, for any ${\gamma}^{\ast} \in X \setminus \{\gamma\}$, $\lc {A}_{{\sigma}^{\frown}{\langle {\gamma}^{ \ast}\rangle}} \rc < \lambda$.
 Therefore, $A \setminus {A}_{{\sigma}^{\frown}{\langle \gamma \rangle}} = \left( A \setminus {A}_{\sigma} \right) \cup \left({\bigcup}_{{\gamma}^{\ast} \in X \setminus \{\gamma\}} { {A}_{{\sigma}^{\frown}{\langle {\gamma}^{\ast} \rangle}}}\right)$ is a set of size less than $\lambda$.
 Finally, suppose that $\delta < \lambda$ is a limit ordinal and that for each $\mu < \delta$ there is a ${\sigma}_{\mu} \in {\kappa}^{\mu}$ such that $\lc A \setminus {A}_{{\sigma}_{\mu}} \rc < \lambda$.
 By Claim \ref{claim:coloring3}, the ${\sigma}_{\mu}$ form a chain and so $\sigma = {\bigcup}_{\mu < \delta}{{\sigma}_{\mu}} \in {\kappa}^{\delta}$.
As $A \setminus {A}_{\sigma} = {\bigcup}_{\mu < \delta}{\left( A \setminus {A}_{{\sigma}_{\mu}} \right)}$, it follows from the regularity of $\lambda$ that $\lc A \setminus {A}_{\sigma} \rc < \lambda$.
\end{proof}
For each $\delta < \lambda$, let ${\sigma}_{\delta}$ be the unique member of ${\kappa}^{\delta}$ such that $\lc A \setminus {A}_{{\sigma}_{\delta}} \rc < \lambda$.
Again the ${\sigma}_{\delta}$ form a chain and so $f = {\bigcup}_{\delta < \lambda}{{\sigma}_{\delta}} \in {\kappa}^{\lambda}$.
Moreover, it is easy to see that $f$ is strictly increasing.
There is at most one $\alpha \in A$ such that $f = {f}_{s(\alpha)}$.
Therefore, using again the partition relation $\lambda \rightarrow {\left( \lambda, \omega \right)}^{2}$ and the regularity of $\lambda$, it is possible to find ${A}^{\ast} \in {\[A\]}^{\lambda}$ such that: 
\begin{enumerate}
 \item[(3)]
 $\forall \alpha \in {A}^{\ast}\[f \neq {f}_{s(\alpha)}\]$;
 \item[(4)]
 \emph{either} there is a fixed $\delta < \lambda$ such that $\forall \alpha \in {A}^{\ast}\[\Delta\left(f, {f}_{s(\alpha)}\right) = \delta\]$ \emph{or} for all $\alpha, {\alpha}^{\ast} \in {A}^{\ast}$ with $\alpha < {\alpha}^{\ast}$, $\Delta\left(f, {f}_{s(\alpha)}\right) < \Delta\left(f, {f}_{s({\alpha}^{\ast})}\right)$. 
\end{enumerate}
If there is a $\delta < \lambda$ such that $\forall \alpha \in {A}^{\ast}\[\Delta \left( f, {f}_{s(\alpha)}\right) = \delta\]$, then ${A}^{\ast} \subset A \setminus {A}_{{\sigma}_{\delta + 1}}$, contradicting the fact that $\lc A \setminus {A}_{{\sigma}_{\delta + 1}}\rc < \lambda$.
Therefore, for all $\alpha, {\alpha}^{\ast} \in {A}^{\ast}$, if $\alpha < {\alpha}^{\ast}$, then $\Delta\left( f, {f}_{s(\alpha)} \right) < \Delta \left(f, {f}_{s({\alpha}^{\ast})}\right)$.

Now suppose first that there is a fixed $\beta < \kappa$ such that $\forall \alpha \in {A}^{\ast}\[\beta(\alpha) = \beta\]$.
Choose ${\alpha}_{0} < {\alpha}_{1} < {\alpha}_{2}$, all in ${A}^{\ast}$.
It is clear that $\Delta\left({f}_{s({\alpha}_{0})}, {f}_{s({\alpha}_{2})}\right) = \Delta\left(f, {f}_{s({\alpha}_{0})}\right)$ and that $\Delta\left({f}_{s({\alpha}_{1})}, {f}_{s({\alpha}_{2})}\right) = \Delta\left(f, {f}_{s({\alpha}_{1})}\right)$.
Therefore, 
\begin{align*}
& \beta = {f}_{s({\alpha}_{2})}\left(\Delta\left({f}_{s({\alpha}_{0})}, {f}_{s({\alpha}_{2})}\right)\right) = f\left(\Delta\left(f, {f}_{s({\alpha}_{0})}\right)\right) \ \text{and also} \\
& \beta = {f}_{s({\alpha}_{2})}\left(\Delta\left({f}_{s({\alpha}_{1})}, {f}_{s({\alpha}_{2})}\right)\right) = f\left(\Delta\left(f, {f}_{s({\alpha}_{1})}\right)\right).
\end{align*}
This is a contradiction because $f$ is strictly increasing.

In the other case, that is, when $\forall \alpha, {\alpha}^{\ast} \in {A}^{\ast}\[\alpha < {\alpha}^{\ast} \implies \beta(\alpha) < \beta({\alpha}^{\ast})\]$, we argue as follows.
There must be $\delta \in \{\lambda, \lambda + 1\}$ such that $f \neq {f}_{s(\delta)}$.
Choose $\alpha \in {A}^{\ast}$ with $\Delta\left(f, {f}_{s(\alpha)}\right) > \Delta\left(f, {f}_{s(\delta)}\right)$ and $\beta(\alpha) \neq {f}_{s(\delta)}\left(\Delta\left(f, {f}_{s(\delta)}\right)\right)$.
Since $c(\{s(\alpha), s(\delta)\}) = 1$, we have $\beta(\alpha) = {f}_{s(\delta)}\left(\Delta\left({f}_{s(\alpha)}, {f}_{s(\delta)}\right)\right) = {f}_{s(\delta)}\left(\Delta\left(f, {f}_{s(\delta)}\right)\right) \neq \beta(\alpha)$.
This is a contradiction which completes the proof.
\end{proof}
\begin{Theorem} \label{thm:maincor}
 Let $\kappa \geq \omega$ be a cardinal.
 If there is a ${\kappa}^{+}$-Suslin tree, then 
 \begin{align*}
  {\kappa}^{+} \not\rightarrow {\left( {\kappa}^{+}, {\log}_{\kappa}({\kappa}^{+}) + 2 \right)}^{2}.
\end{align*}  
\end{Theorem}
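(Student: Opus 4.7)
The plan is to derive the partition relation from Theorem \ref{thm:coloring} after two routine reductions: normalizing the given ${\kappa}^{+}$-Suslin tree so that it satisfies the extra hypotheses of that theorem, and then extending the resulting coloring of comparable pairs to a coloring of $[{\kappa}^{+}]^{2}$. First I would replace the given Suslin tree by a normalized ${\kappa}^{+}$-Suslin tree $\Sa$ of height ${\kappa}^{+}$ in which $\lc \succc(s) \rc = \kappa$ and $\lc \{t \in \Sa : t \geq s\} \rc = {\kappa}^{+}$ for every $s \in \Sa$; this is a standard pruning-and-thickening argument that preserves the non-existence of chains or antichains of size ${\kappa}^{+}$. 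Theorem \ref{thm:coloring} then supplies a coloring $c \colon \Sa^{[2]} \to 2$ with the two listed properties. I would also fix a bijection $\pi \colon \Sa \to {\kappa}^{+}$ enumerating $\Sa$ level by level; since each level of $\Sa$ is an antichain of cardinality at most $\kappa$, this can be arranged so that $s <_{\Sa} t$ implies $\pi(s) < \pi(t)$ as ordinals.

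Writing $\lambda = {\log}_{\kappa}({\kappa}^{+})$, I would then define $c' \colon [{\kappa}^{+}]^{2} \to 2$ by setting $c'(\{\pi(s), \pi(t)\}) = c(\{s, t\})$ when $s$ and $t$ are tree-comparable, and $c'(\{\pi(s), \pi(t)\}) = 0$ when $s$ and $t$ are tree-incomparable. If some $X \subset {\kappa}^{+}$ with $\lc X \rc = {\kappa}^{+}$ satisfied $[X]^{2} \subset K_{0, c'}$, then $\pi^{-1}(X)$ would be a subset of $\Sa$ of size ${\kappa}^{+}$ all of whose comparable pairs receive $c$-color $0$, contradicting clause $(1)$ of Theorem \ref{thm:coloring}. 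If some $Y \subset {\kappa}^{+}$ with $\otp(Y) = \lambda + 2$ satisfied $[Y]^{2} \subset K_{1, c'}$, then every pair from $Y' = \pi^{-1}(Y)$ must correspond to a tree-comparable pair (else it would have been assigned color $0$), so $Y'$ is a chain in $\Sa$; since $\pi$ respects the two orders, $Y'$ has tree-order type exactly $\lambda + 2$. Letting $t$ denote the tree-maximum of $Y'$ and choosing any $s >_{\Sa} t$ (which exists by the normalization), one obtains $Y' \subset \pred(s)$ with $\otp(Y') = \lambda + 2$ and $(Y')^{[2]} \subset K_{1, c}$, contradicting clause $(2)$ of Theorem \ref{thm:coloring}.

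The only step requiring real care is the initial normalization, and specifically arranging that every node has exactly $\kappa$ immediate successors; this can be done by first pruning to the nodes with ${\kappa}^{+}$-many descendants and then, if necessary, inserting cosmetic $\kappa$-wide branching between consecutive levels. Neither operation can create a chain or antichain of size ${\kappa}^{+}$ where none existed before, so the Suslin property is retained. Once that is in place, the extension of the coloring and the verification of both clauses are essentially bookkeeping, since the order-preserving nature of $\pi$ makes the correspondence between tree structure on $\Sa$ and ordinal structure on ${\kappa}^{+}$ transparent.
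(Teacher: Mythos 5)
Your proof is correct and follows the same basic route as the paper: normalize the Suslin tree so that it satisfies the hypotheses of Theorem \ref{thm:coloring}, and then pull the resulting coloring of ${\Sa}^{\[2\]}$ back to ${\[{\kappa}^{+}\]}^{2}$ via an enumeration of $\Sa$, sending incomparable pairs to color $0$. The one genuine difference is your choice of a level--by--level, hence tree--order--preserving, bijection $\pi$. This is legitimate (every level of a ${\kappa}^{+}$-Suslin tree is an antichain of size at most $\kappa$, and every proper initial union of levels has size at most $\kappa$, so the concatenated enumeration has order type exactly ${\kappa}^{+}$), and it buys you a cleaner verification of the first alternative: the paper uses an arbitrary $1$-$1$ enumeration and must therefore first apply ${\kappa}^{+} \rightarrow {\left( {\kappa}^{+}, \omega \right)}^{2}$ to an auxiliary coloring to extract a ${\kappa}^{+}$-sized set on which the enumeration does not reverse the tree order, a step your $\pi$ renders unnecessary. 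The second alternative is handled identically in both arguments. One caution: the normalization is asserted without proof in the paper as well, but your parenthetical recipe of ``inserting cosmetic $\kappa$-wide branching between consecutive levels'' does not work as literally stated, since naively inserted sibling nodes would fail the requirement that $\lc \{t \in \Sa: t \geq s\} \rc = {\kappa}^{+}$ for every $s$; the standard normalization requires a more global argument, so if you intend to spell this step out you should do so with care.
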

\begin{proof}
 If there is a ${\kappa}^{+}$-Suslin tree, then there is a ${\kappa}^{+}$-Suslin tree $\Sa$ that satisfies the hypotheses of Theorem \ref{thm:coloring}.
 Fix a coloring $c: {\Sa}^{\[2\]} \rightarrow 2$ such that (1) and (2) of Theorem \ref{thm:coloring} are satisfied.
 As $\lc \Sa \rc = {\kappa}^{+}$, let $\langle {s}_{\alpha}: \alpha < {\kappa}^{+}\rangle$ be a 1-1 enumeration of $\Sa$.
 Define $d: {\[{\kappa}^{+}\]}^{2} \rightarrow 2$ as follows.
 For $\alpha, \beta \in {\kappa}^{+}$ with $\alpha < \beta$, if ${s}_{\alpha} < {s}_{\beta}$ and if $c\left( \left\{ {s}_{\alpha}, {s}_{\beta} \right\} \right) = 1$, then define $d(\{\alpha, \beta\}) = 1$.
 Otherwise, define $d(\{\alpha, \beta\}) = 0$.
 
 To see that the first alternative fails, fix $A \subset {\kappa}^{+}$ of size ${\kappa}^{+}$.
 Define $k: {\[A\]}^{2} \rightarrow 2$ as follows.
 Given $\alpha, \beta \in A$ with $\alpha < \beta$, if ${s}_{\beta} < {s}_{\alpha}$, then put $k(\{\alpha, \beta\}) = 1$.
 Otherwise, put $k(\{\alpha, \beta\}) = 0$.
 Applying the partition relation ${\kappa}^{+} \rightarrow {\left( {\kappa}^{+}, \omega \right)}^{2}$, if there is a set $C \subset A$ with $\otp(C) = \omega$ and $k''{\[C\]}^{2} = \{1\}$, then we get a infinite strictly decreasing sequence of elements of $\Sa$, which is not possible.
 So there must be $C \in {\[A\]}^{{\kappa}^{+}}$ such that $k''{\[C\]}^{2} = \{0\}$.
 Now, $X = \{{s}_{\alpha}: \alpha \in C\}$ is a subset of $\Sa$ of size ${\kappa}^{+}$, and so ${X}^{\[2\]} \not\subset {K}_{0, c}$.
 Suppose $s, t \in X$ with $s < t$ and $\{s, t\} \notin {K}_{0, c}$.
 Let $\alpha, \beta \in C$ be such that $s = {s}_{\alpha}$ and $t = {s}_{\beta}$.
 Obviously, $\alpha \neq \beta$.
 If $\beta < \alpha$, then $k(\{\beta, \alpha\}) = 1$, contradicting the choice of $C$.
 Therefore, $\alpha < \beta$, ${s}_{\alpha} < {s}_{\beta}$, and $c\left( \left\{ {s}_{\alpha}, {s}_{\beta} \right\}\right) = 1$.
 This implies that $d(\{\alpha, \beta\}) = 1$.
 Thus ${\[A\]}^{2} \not\subset {K}_{0, d}$.
 
 To see that the second alternative also fails, fix $A \subset {\kappa}^{+}$ whose order-type is ${\log}_{\kappa}({\kappa}^{+}) + 2$, and suppose that $d''{\[A\]}^{2} = \{1\}$.
 In particular, this implies that for any $\alpha, \beta \in A$, if $\alpha < \beta$, then ${s}_{\alpha} < {s}_{\beta}$.
 Therefore, $B = \{{s}_{\alpha}: \alpha \in A\}$ is a subset of $\Sa$ of order-type ${\log}_{\kappa}({\kappa}^{+}) + 2$ and $B \subset \pred\left({s}_{\max(A)}\right)$.
 So ${B}^{\[2\]} \not\subset {K}_{1, c}$.
 Suppose $s, t \in B$, $s < t$, and that $\{s, t\} \notin {K}_{1, c}$.
 Let $\alpha, \beta \in A$ be such that $s = {s}_{\alpha}$ and $t = {s}_{\beta}$.
 Clearly, $\alpha < \beta$, and since $d(\{\alpha, \beta \}) = 1$, it follows that $c(\{s, t\}) = 1$.
 This is a contradiction which completes the proof.
 \end{proof}
 Here are some immediate corollaries of our theorem.
 First, if $\cf(\kappa) = \omega$, then ${\log}_{\kappa}({\kappa}^{+}) = \omega$.
 Therefore we get the following at ${\aleph}_{\omega + 1}$.
 \begin{Cor} \label{cor:omega+1}
  If there is an ${\aleph}_{\omega + 1}$-Suslin tree, then ${\aleph}_{\omega + 1} \not\rightarrow {\left( {\aleph}_{\omega + 1}, \omega + 2 \right)}^{2}$.
 \end{Cor}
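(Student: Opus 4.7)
The plan is to apply Theorem \ref{thm:maincor} directly with $\kappa = \aleph_{\omega}$, so that $\kappa^{+} = \aleph_{\omega + 1}$. The only thing that needs to be verified to extract the stated conclusion from the main theorem is the cardinal arithmetic identity $\log_{\aleph_{\omega}}(\aleph_{\omega + 1}) = \omega$; once this is in hand, Theorem \ref{thm:maincor} delivers exactly $\aleph_{\omega + 1} \not\rightarrow (\aleph_{\omega + 1}, \omega + 2)^{2}$.

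For the computation of $\log_{\aleph_{\omega}}(\aleph_{\omega + 1})$, I would invoke the remark immediately following Definition \ref{def:lambda}, namely that $\log_{\kappa}(\kappa^{+})$ is always a regular cardinal greater than or equal to $\omega$. Thus it suffices to show that $\aleph_{\omega}^{\omega} > \aleph_{\omega}$, since this gives $\log_{\aleph_{\omega}}(\aleph_{\omega + 1}) \leq \omega$ by the definition of the logarithm. The inequality $\aleph_{\omega}^{\omega} > \aleph_{\omega}$ is a direct application of K\"onig's theorem: for any infinite $\kappa$, one has $\kappa^{\cf(\kappa)} > \kappa$, and since $\cf(\aleph_{\omega}) = \omega$, we get $\aleph_{\omega}^{\omega} = \aleph_{\omega}^{\cf(\aleph_{\omega})} > \aleph_{\omega}$, as desired.

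There is essentially no obstacle here; all of the combinatorial work has already been done in Theorem \ref{thm:coloring} and packaged as Theorem \ref{thm:maincor}. The corollary simply records the content of the main theorem in the most striking special case, where the cardinal $\log_{\kappa}(\kappa^{+})$ on the right side of the partition symbol collapses to $\omega$ in $\ZFC$ alone, with no assumption on cardinal arithmetic. In particular, the existence of an $\aleph_{\omega + 1}$-Suslin tree already rules out any improvement of the Erd\H{o}s--Rado relation $\aleph_{\omega + 1} \rightarrow (\aleph_{\omega + 1}, \omega + 1)^{2}$ at this singular--successor level.
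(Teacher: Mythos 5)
Your proposal is correct and matches the paper's own (very short) argument: the corollary is just Theorem \ref{thm:maincor} applied with $\kappa = \aleph_{\omega}$, together with the observation that $\omega \leq {\log}_{\aleph_{\omega}}(\aleph_{\omega+1}) \leq \cf(\aleph_{\omega}) = \omega$, which is exactly the K\"onig's theorem computation you give.
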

 It is known that powerful large cardinal hypotheses are needed to destroy all Suslin trees at ${\aleph}_{\omega + 1}$.
 The current best result due to Sinapova~\cite{sin12} uses $\omega$ many supercompact cardinals.
 
 For ${\omega}_{2}$ we get the following picture: if there is a ${\omega}_{2}$-Suslin tree and ${2}^{\omega} \geq {\omega}_{2}$, then ${\omega}_{2} \not\rightarrow {({\omega}_{2}, \omega + 2)}^{2}$; if there is a ${\omega}_{2}$-Suslin tree and ${2}^{\omega} = {\omega}_{1}$, then ${\omega}_{2} \not\rightarrow {({\omega}_{2}, {\omega}_{1} + 2)}^{2}$.
 The consistency of the relation ${\omega}_{2} \rightarrow {\left( {\omega}_{2}, {\omega}_{1} + 2 \right)}^{2}$ is unknown at present.
 \begin{Question} \label{q:partition4}
  Is it consistent that ${\omega}_{2} \rightarrow {\left( {\omega}_{2}, \alpha \right)}^{2}$, for all $\alpha < {\omega}_{2}$?
  If so, then what is its consistency strength?
 \end{Question}
 By Sierpinski's classical coloring, ${\omega}_{2} \rightarrow {({\omega}_{1}, {\omega}_{1})}^{2}$ implies $\CH$.
 So by our Theorem \ref{thm:maincor}, a positive answer to Question \ref{q:partition4} requires both $\CH$ and the non-existence of ${\omega}_{2}$-Suslin trees.
 A model where these requirements are satisfied was constructed by Laver and Shelah~\cite{aleph2suslin} using a weakly compact cardinal. It is  unknown whether the weakly compact cardinal is needed for their result but it is for the stronger version of their result claiming the consistency of CH with the statement that all $\aleph_2$-Aronszajn trees are special (see, for example, Corollary 7.2.15 of \cite{walks})
 It can be shown, however, that ${\omega}_{2}\not \rightarrow {\left( {\omega}_{2}, {\omega}_{1} + 2 \right)}^{2}$ holds in their model. This does not rule out that their scheme of collapsing a large cardinal to $\aleph_2 $ and then iteratively destroying all counterexamples to ${\omega}_{2}\rightarrow {\left( {\omega}_{2}, {\omega}_{1} + 2 \right)}^{2}$ can not be applied. 
 
 We next use Theorem \ref{thm:maincor} to generalize a result of Komj{\'a}th~\cite{kom98} to all successor cardinals.
 Our result distinguishes between the relations ${\kappa}^{+} \rightarrow {\left( {\kappa}^{+}, \omega + 2 \right)}^{2}$ and ${\kappa}^{+} \rightarrow {\left( {\kappa}^{+}, (\omega:2) \right)}^{2}$ without the use of any large cardinals.
 \begin{Lemma} \label{lem:ultrafilter}
  Let $\kappa \geq {\omega}_{1}$ be a cardinal.
  Suppose $c: {\[\kappa\]}^{2} \rightarrow 2$ is a coloring.
  Suppose there exists a sequence $\langle {F}_{\alpha}: \alpha < \omega + \omega \rangle$ such that:
  \begin{enumerate}
   \item
   $\exists n \in \omega \setminus \{0\} \forall \alpha < \omega + \omega\[ {F}_{\alpha} \in {\[\kappa\]}^{n}\]$;
   \item
   for each $\alpha < \beta < \omega + \omega$, $\max({F}_{\alpha}) < \min({F}_{\beta})$, and there exist $\zeta \in {F}_{\alpha}$ and $\xi \in {F}_{\beta}$ such that $c(\{\zeta, \xi\}) = 1$.
  \end{enumerate}
  Then there exist $A \subset \kappa$ and $\gamma, \delta \in \kappa$ such that:
  \begin{enumerate}
   \item[(3)]
   $\otp(A) = \omega$ and $\forall \xi \in A \[\xi < \gamma < \delta\]$;
   \item[(4)]
   $\forall \xi \in A \[c(\{\xi, \gamma\}) = c(\{\xi, \delta\}) = 1\]$.
  \end{enumerate}
 \end{Lemma}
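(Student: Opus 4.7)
The plan is to pigeonhole the cross-pair witnesses down to a single ``type'' $(i^*, j^*) \in n \times n$ that works for infinitely many second-block indices, and then invoke a nonprincipal ultrafilter on $\omega$ to overcome the standard obstruction that two infinite subsets of $\omega$ need not have infinite intersection. After fixing an enumeration $F_\gamma = \{f_\gamma(0), \dots, f_\gamma(n-1)\}$ for each $\gamma < \omega+\omega$ and a nonprincipal ultrafilter $\mathcal{U}$ on $\omega$, for each $\beta \in [\omega, \omega+\omega)$ I would consider the sets
\[
V_\beta^{(i,j)} := \{\alpha < \omega : c(\{f_\alpha(i), f_\beta(j)\}) = 1\},
\]
observe that hypothesis (2) forces $\bigcup_{(i,j) \in n \times n} V_\beta^{(i,j)} = \omega$, and conclude (since the union is finite and $\omega \in \mathcal{U}$) that for some $(i_\beta, j_\beta) \in n \times n$ we have $V_\beta^{(i_\beta, j_\beta)} \in \mathcal{U}$.

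Next, because $(i_\beta, j_\beta)$ takes values in the finite set $n \times n$ while $\beta$ ranges over an infinite set, ordinary pigeonhole delivers a fixed $(i^*, j^*)$ and an infinite $B \subset [\omega, \omega+\omega)$ such that $V_\beta^{(i^*, j^*)} \in \mathcal{U}$ for every $\beta \in B$. Picking any $\beta_1 < \beta_2$ in $B$, the intersection $J := V_{\beta_1}^{(i^*, j^*)} \cap V_{\beta_2}^{(i^*, j^*)}$ lies in $\mathcal{U}$ and is hence infinite. Setting
\[
A := \{f_\alpha(i^*) : \alpha \in J\}, \quad \gamma := f_{\beta_1}(j^*), \quad \delta := f_{\beta_2}(j^*)
\]
should produce the desired configuration: the interval condition $\max(F_\alpha) < \min(F_\beta)$ from (2) secures both $\otp(A) = \omega$ (distinct $\alpha \in J$ yield strictly increasing $f_\alpha(i^*)$'s) and $A < \gamma < \delta$, while membership of $\alpha$ in $J$ supplies $c(\{\xi, \gamma\}) = c(\{\xi, \delta\}) = 1$ for each $\xi \in A$.

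The main obstacle is exactly the step that forces the use of the ultrafilter. A purely pigeonhole argument can guarantee only that each $V_\beta^{(i^*, j^*)}$ is an infinite subset of $\omega$, but infinite sets can have finite pairwise intersections (witness any almost-disjoint family on $\omega$), so from bare infiniteness one cannot in general extract two second-block indices with a common infinite down-neighborhood in the first block. Upgrading ``infinite'' to ``$\mathcal{U}$-large'' restores closure under finite intersection, and this is precisely what makes the argument succeed. It is worth noting that only cross-pair witnesses from pairs $\alpha < \omega \leq \beta$ enter the proof; the intra-block witnesses afforded by (2) are not used, which suggests that the hypothesis $\omega + \omega$ could in fact be substantially weakened.
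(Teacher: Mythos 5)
Your proposal is correct and follows essentially the same route as the paper's own proof: fix an ultrafilter on $\omega$, assign to each second-block index a pair $(i,j)\in n\times n$ whose witness set is $\mathcal{U}$-large, pigeonhole to find two indices with the same pair, and intersect the two $\mathcal{U}$-large sets to build $A$, $\gamma$, $\delta$. Your explicit insistence that $\mathcal{U}$ be nonprincipal (so that the intersection is infinite) is a point the paper leaves implicit, and your closing observation that the full $\omega+\omega$ hypothesis is not needed is accurate but inessential.
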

 \begin{proof}
  Let $\U$ be an ultrafilter on $\omega$.
  For each $i < n$ and $\alpha < \omega + \omega$, let ${F}_{\alpha}(i)$ denote the $i$th element of ${F}_{\alpha}$.
  By (2), for each $\omega \leq \gamma < \omega + \omega$, there are $\langle {i}_{\gamma}, {j}_{\gamma}\rangle \in n \times n$ and ${A}_{\gamma} \in \U$ such that $\forall k \in {A}_{\gamma}\[c\left( \left\{ {F}_{k}({i}_{\gamma}), {F}_{\gamma}({j}_{\gamma})\right\}\right) = 1\]$. 
  Find $\omega \leq \gamma < \delta < \omega + \omega$ such that $\langle {i}_{\gamma}, {j}_{\gamma} \rangle =  \langle {i}_{\delta}, {j}_{\delta} \rangle = \langle i, j \rangle$, for some $\langle i, j \rangle \in n \times  n$.
  Let $A = \{{F}_{k}(i): k \in {A}_{\gamma} \cap {A}_{\delta}\}$.
  Then it is clear that $A$, ${F}_{\gamma}(j)$, and ${F}_{\delta}(j)$ are as needed.
 \end{proof}
 \begin{Theorem} \label{thm:difference}
  Let $\kappa \geq \omega$ be a cardinal and let $\Sa$ be an ${\omega}_{1}$-Suslin tree.
  Assume that ${2}^{\omega} = {\kappa}^{++}$, that $\MA(\Sa)$ holds, and that there exists a ${\kappa}^{+}$-Suslin tree. 
  Then ${\kappa}^{+} \rightarrow {\left({\kappa}^{+}, (\omega:2) \right)}^{2}$, while ${\kappa}^{+} \not\rightarrow {({\kappa}^{+}, \omega + 2)}^{2}$.
 \end{Theorem}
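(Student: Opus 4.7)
The negative direction is immediate from Theorem \ref{thm:maincor}: since $2^{\omega} = \kappa^{++}$ gives $\kappa^{\omega} \geq 2^{\omega} > \kappa$, one has $\log_{\kappa}(\kappa^{+}) = \omega$, so the hypothesized $\kappa^{+}$-Suslin tree yields $\kappa^{+} \not\rightarrow (\kappa^{+}, \omega + 2)^{2}$ at once.

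For the positive direction, fix $c: [\kappa^{+}]^{2} \to 2$ and suppose toward a contradiction that there is no $K_{0}$-homogeneous set of size $\kappa^{+}$ and no $(\omega:2)$-rectangle in $K_{1}$. The plan is to apply $\MA(\Sa)$ to the poset $\P = \{p \in [\kappa^{+}]^{<\omega} : [p]^{2} \subseteq K_{0,c}\}$ ordered by reverse inclusion, with the dense sets $D_{\alpha} = \{p \in \P : p \not\subseteq \alpha\}$ for $\alpha < \kappa^{+}$; a filter meeting these $\kappa^{+} < \c$ dense sets unions to the missing $K_{0}$-homogeneous set of size $\kappa^{+}$. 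The principal step is verifying that $\P \times \Sa$ is ccc. Given an uncountable antichain $\{(p_{\alpha}, s_{\alpha}) : \alpha < \omega_{1}\}$, a $\Delta$-system reduction gives $p_{\alpha} = r \cup F_{\alpha}$ with uniform $|F_{\alpha}| = n$, the $F_{\alpha}$ pairwise disjoint and placed in increasing order. A level-by-level pigeonhole inside $\Sa$, exploiting the Suslin property, extracts indices $\alpha_{0} < \alpha_{1} < \cdots$ of order type $\omega + \omega$ along which the $s_{\alpha_{i}}$ form a chain. For every such $i < j$ the tree nodes are comparable, so incompatibility in the product forces $p_{\alpha_{i}} \cup p_{\alpha_{j}}$ to contain a $K_{1}$-edge; since $r$, $F_{\alpha_{i}}$, and $F_{\alpha_{j}}$ are individually $K_{0}$-homogeneous, the offending edge must connect $F_{\alpha_{i}}$ to $F_{\alpha_{j}}$. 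Lemma \ref{lem:ultrafilter} applied to $(F_{\alpha_{i}})_{i < \omega + \omega}$ then produces the forbidden $(\omega:2)$-rectangle.

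The remaining task is to verify density of each $D_{\alpha}$. If $D_{\alpha}$ fails at some $p_{0}$, every $\xi \geq \alpha$ is $K_{1}$-joined to some element of $p_{0}$, and pigeonhole supplies $\eta^{*} \in p_{0}$ with $V_{\eta^{*}} := \{\xi : c(\{\eta^{*}, \xi\}) = 1\}$ of cardinality $\kappa^{+}$. I would handle this by splitting on $B := \{\eta < \kappa^{+} : |V_{\eta}| = \kappa^{+}\}$. If $|B| < \kappa^{+}$, I would rerun the whole argument with $\P$ replaced by its restriction to finite $K_{0}$-homogeneous subsets of $\kappa^{+} \setminus B$; there, for $\eta \notin B$ the set $V_{\eta}$ has size less than $\kappa^{+}$ so a finite union $\bigcup_{\eta \in p} V_{\eta}$ cannot exhaust $\kappa^{+}$, making every $D_{\alpha}$ automatically dense while the $\Sa$-ccc argument transfers verbatim. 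If instead $|B| = \kappa^{+}$, the goal becomes to produce the $(\omega:2)$-rectangle directly from $B$ by recursively selecting a $K_{1}$-homogeneous sequence $\eta_{0} < \eta_{1} < \cdots$ in $B$ while maintaining that $\bigcap_{n < k} V_{\eta_{n}}$ retains cardinality $\kappa^{+}$ at every finite stage $k$, and then exhibiting two ordinals $\gamma < \delta$ in $\bigcap_{n < \omega} V_{\eta_{n}}$; these then satisfy $c(\{\eta_{n}, \gamma\}) = c(\{\eta_{n}, \delta\}) = 1$ for all $n$, contradicting the absence of an $(\omega:2)$-rectangle. The main technical obstacle is ensuring that this family of future witnesses survives the $\omega$-th limit stage, which will require a second application of $\MA(\Sa)$ to an auxiliary $\Sa$-ccc poset whose conditions encode both the $K_{1}$-chain and a shrinking system of candidate witnesses.
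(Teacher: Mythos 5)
Your negative direction is exactly the paper's (Theorem \ref{thm:maincor} with ${\log}_{\kappa}({\kappa}^{+}) = \omega$), and your positive direction has the right overall shape --- apply $\MA(\Sa)$ to the poset of finite $0$-homogeneous sets --- but two of the three key verifications have genuine gaps.

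First, the productive-ccc argument. You claim that ``a level-by-level pigeonhole inside $\Sa$'' extracts, from an uncountable family $\{{s}_{\alpha}\}$, a set of indices of order type $\omega + \omega$ whose tree nodes form a chain. No pigeonhole does this: the naive recursive construction of a chain inside an uncountable subset of a Suslin tree breaks at the limit stage (the $\omega$ nodes chosen so far need not have any element of the family above all of them), and even granting an infinite chain you must control the \emph{index} order type of the chain, not just its order type in the tree, so that the blocks ${F}_{{\alpha}_{i}} \setminus R$ remain separated and Lemma \ref{lem:ultrafilter} applies. The paper gets both at once by actually forcing with $\Sa$: a generic branch meets $\{{s}_{\alpha} : \alpha < {\omega}_{1}\}$ on an uncountable index set $X$ in $\V\[G\]$, one picks $Y \subset X$ of order type $\omega + \omega$ there, and $Y$ lies in $\V$ because $\Sa$ adds no new countable sets of ordinals. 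This forcing step is the entire reason $\MA(\Sa)$ (rather than $\MA$) suffices, and it is missing from your argument.

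Second, the density of the ${D}_{\alpha}$. The paper does not work with all finite $0$-homogeneous sets; it first proves (Claim \ref{claim:diff1}) that there is a single $\delta < {\kappa}^{+}$ such that \emph{every} finite $0$-homogeneous set disjoint from $\delta$ has $0$-extensions cofinally in ${\kappa}^{+}$, the proof again running through Lemma \ref{lem:ultrafilter} applied to $\omega+\omega$ many pairwise $1$-linked blocks. Your substitute case analysis on $B = \{\eta : \lc {V}_{\eta} \rc = {\kappa}^{+}\}$ does not close: in the case $\lc B \rc = {\kappa}^{+}$ you must find ${\eta}_{0} < {\eta}_{1} < \dotsb$ with $\lc {\bigcap}_{n < k}{{V}_{{\eta}_{n}}} \rc = {\kappa}^{+}$ at each stage, but two sets of size ${\kappa}^{+}$ need not have large (or any) intersection, so the recursion can fail at stage $2$; and the passage to ${\bigcap}_{n < \omega}{{V}_{{\eta}_{n}}}$ is explicitly deferred to an unspecified auxiliary forcing. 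Also note that a $(\omega:2)$-rectangle does not require the ${\eta}_{n}$ to be pairwise $1$-linked, so that requirement only makes your recursion harder. Both gaps are repaired by the same device --- Lemma \ref{lem:ultrafilter} fed with an $(\omega+\omega)$-sequence of blocks --- which you use only for the ccc step; the paper uses it for the density step as well.
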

 \begin{proof}
 The existence of a ${\kappa}^{+}$-Suslin tree and the hypothesis that ${2}^{\omega} \geq {\kappa}^{+}$ ensure ${\kappa}^{+} \not\rightarrow {({\kappa}^{+}, \omega + 2)}^{2}$.
  Suppose for a contradiction that $c: {\[{\kappa}^{+}\]}^{2} \rightarrow 2$ is a coloring that witnesses ${\kappa}^{+} \not\rightarrow {\left({\kappa}^{+}, (\omega:2) \right)}^{2}$.
  We find a c.\@c.\@c.\@ poset ${\P}_{c}$ preserving $\Sa$ such that any filter meeting a certain collection of ${\kappa}^{+}$ many dense subsets of ${\P}_{c}$ yields a 0-homogeneous subset of ${\kappa}^{+}$ of size ${\kappa}^{+}$.
  This will give a contradiction.
  \begin{Claim} \label{claim:diff1}
  There is a $\delta < {\kappa}^{+}$ such that for any $F \in {\[{\kappa}^{+}\]}^{< \omega}$, if $F \cap \delta = 0$, then $\forall \alpha < {\kappa}^{+} \exists {\kappa}^{+} > \beta > \alpha \forall \xi \in F \[\xi < \beta \wedge c(\{\xi, \beta\}) = 0\]$.
  \end{Claim}
  \begin{proof}
  If not, then it is possible to build a sequence $\langle {F}_{\alpha}: \alpha < {\kappa}^{+}\rangle$ such that for each $\alpha < {\kappa}^{+}$
  \begin{enumerate}
   \item
   ${F}_{\alpha} \in {\[{\kappa}^{+}\]}^{< \omega}$ and ${F}_{\alpha} \neq 0$;
   \item
   $\forall \mu < \alpha \[\max({F}_{\mu}) < \min({F}_{\alpha})\]$;
   \item
   $\forall \mu < \alpha \exists \zeta \in {F}_{\mu} \exists \xi \in {F}_{\alpha}\[c(\{\zeta, \xi\}) = 1\]$.
  \end{enumerate}
 Now, there are $n \in \omega \setminus \{0\}$ and $X \subset {\kappa}^{+}$ such that $\otp(X) = \omega + \omega$ and $\forall \alpha \in X\[\lc {F}_{\alpha} \rc = n\]$.
 However, by Lemma \ref{lem:ultrafilter}, this contradicts the supposition that $c$ is witness to ${\kappa}^{+} \not\rightarrow {\left({\kappa}^{+}, (\omega:2) \right)}^{2}$.
 \end{proof}
 Fix $\delta$ as in the claim above.
 Put ${\P}_{c} = \{F \in {\[{\kappa}^{+}\]}^{< \omega}: F \cap \delta = 0 \ \wedge {\[F\]}^{2} \subset {K}_{0}\}$.
 For $F, G \in {\P}_{c}$, $G \leq F$ if $G \supset F$.
 \begin{Claim} \label{claim:ccc}
 ${\P}_{c}$ is c.\@c.\@c.\@ and preserves $\Sa$. 
 \end{Claim}
 \begin{proof}
 It is sufficient to prove that $\Sa \times {\P}_{c}$ is c.\@c.\@c.
 Suppose that $\langle \langle {s}_{\alpha}, {F}_{\alpha} \rangle: \alpha < {\omega}_{1} \rangle$ is a sequence of pairwise incompatible elements of $\Sa \times {\P}_{c}$.
 As $\Sa$ is c.\@c.\@c.\@, there is a $(\V, \Sa)$-generic filter $G$ such that $\{\alpha < {\omega}_{1}: {s}_{\alpha} \in G\}$ is uncountable.
 Work in $\V\[G\]$.
 Let $X = \{\alpha < {\omega}_{1}: {s}_{\alpha} \in G\}$.
 For any $\alpha, \beta \in X$ with $\alpha < \beta$, ${F}_{\alpha}$ and ${F}_{\beta}$ are incompatible in ${\P}_{c}$.
 By shrinking $X$ to a smaller uncountable set if necessary, we may assume that there exist $R \in {\[{\kappa}^{+}\]}^{< \omega}$ and $n \in \omega \setminus \{0\}$ such that:
 \begin{enumerate}
  \item[(4)]
  $\forall \alpha, \beta \in X \[\alpha < \beta \implies {F}_{\alpha} \cap {F}_{\beta} = R\]$;
  \item[(5)]
  $\forall \alpha \in X \[\lc{F}_{\alpha} \setminus R\rc = n\]$;
  \item[(6)]
  $\forall \alpha, \beta \in X \[\alpha < \beta \implies \max({F}_{\alpha} \setminus R) < \min({F}_{\beta} \setminus R)\]$.
 \end{enumerate}
 Also, for any $\alpha, \beta \in X$ with $\alpha < \beta$, the incompatibility of ${F}_{\alpha}$ and ${F}_{\beta}$ implies that $\exists \zeta \in {F}_{\alpha} \setminus R \exists \xi \in {F}_{\beta} \setminus R \[c(\{\zeta, \xi\}) = 1\]$.
 Let $Y \subset X$ be a set with $\otp(Y) = \omega + \omega$.
 As $\Sa$ does not add any new countable sets of ordinals, $Y \in \V$.
 Since $R \in \V$, the sequence $\langle {F}_{\alpha} \setminus R: \alpha \in Y\rangle \in \V$, and it fulfills in $\V$ the hypotheses of Lemma \ref{lem:ultrafilter}, thus contradicting the supposition that $c$ witnesses ${\kappa}^{+} \not\rightarrow {\left({\kappa}^{+}, (\omega:2)\right)}^{2}$ in $\V$. 
 \end{proof}
 To complete the proof, note that by Claim \ref{claim:diff1}, for each $\alpha < {\kappa}^{+}$, the set ${D}_{\alpha} = \{F \in {\P}_{c}: \exists \beta \in F\[\alpha < \beta\]\}$ is dense in ${\P}_{c}$.
 If $G \subset {\P}_{c}$ is a filter such that $\forall \alpha < {\kappa}^{+}\[G \cap {D}_{\alpha} \neq 0\]$, then $X = \bigcup G \in {\[{\kappa}^{+}\]}^{{\kappa}^{+}}$ and ${\[X\]}^{2} \subset {K}_{0}$, a final contradiction.
\end{proof}
 \begin{Cor} \label{cor:difference}
  Let $\kappa \geq \omega$ be cardinal.
  It is consistent to have ${\kappa}^{+} \rightarrow {\left({\kappa}^{+}, (\omega:2)\right)}^{2}$, while ${\kappa}^{+} \not\rightarrow {({\kappa}^{+}, \omega + 2)}^{2}$.
 \end{Cor}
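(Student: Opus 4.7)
The plan is to exhibit, for each fixed $\kappa \geq \omega$, a model of $\ZFC$ in which all the hypotheses of Theorem \ref{thm:difference} simultaneously hold, and then invoke that theorem. I begin with a ground model $\V$ of $\GCH$ together with enough diamond (for instance $\V = L$), so that I may fix in $\V$ an ${\omega}_{1}$-Suslin tree $\Sa$ and, for $\kappa \geq {\omega}_{1}$, a ${\kappa}^{+}$-Suslin tree $T$; when $\kappa = \omega$, I simply take $T = \Sa$.

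I then perform a finite-support c.\@c.\@c.\@ iteration $\P = \langle {\P}_{\alpha}, {\dot{\Q}}_{\alpha} : \alpha < {\kappa}^{++}\rangle$ of length ${\kappa}^{++}$, at each stage forcing with a name ${\dot{\Q}}_{\alpha}$ for a c.\@c.\@c.\@ poset of size at most ${\kappa}^{+}$ whose product with $\Sa$ is c.\@c.\@c.\@; a standard bookkeeping argument ensures that every such poset arising in the final extension is eventually considered. The usual facts about finite-support c.\@c.\@c.\@ iteration then yield that $\P$ is c.\@c.\@c.\@, that $\P \times \Sa$ is c.\@c.\@c.\@ (so $\Sa$ remains an ${\omega}_{1}$-Suslin tree), that $\VP \models {2}^{\omega} = {\kappa}^{++}$, and that $\VP \models \MA(\Sa)$.

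It remains to check that $T$ is still a ${\kappa}^{+}$-Suslin tree in $\VP$. For $\kappa = \omega$ this is automatic since $T = \Sa$. For $\kappa \geq {\omega}_{1}$, a finite-support and c.\@c.\@c.\@ argument shows that any $\P$-name for a ${\kappa}^{+}$-sized antichain in $T$ is already a ${\P}_{\gamma}$-name for some $\gamma < {\kappa}^{++}$ with $\lc {\P}_{\gamma} \rc \leq {\kappa}^{+}$, so it suffices to show that any c.\@c.\@c.\@ forcing of cardinality at most ${\kappa}^{+}$ preserves ${\kappa}^{+}$-Suslinness of $T$. This is the standard preservation fact when ${\kappa}^{+} \geq {\omega}_{2}$: each purported element of such an antichain has only countably many possible values by the c.\@c.\@c.\@, every level of $T$ has size at most $\kappa$, and the regularity of ${\kappa}^{+}$ combined with a pressing-down argument on the levels of the candidate values rules out a genuinely new antichain of size ${\kappa}^{+}$ in the extension. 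Once $T$ is known to be preserved, Theorem \ref{thm:difference} applies in $\VP$ and yields the corollary.

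The principal technical obstacle is precisely this preservation of $T$ under the iteration for $\kappa \geq {\omega}_{1}$; the remainder of the argument is routine iteration bookkeeping and standard preservation theorems for finite-support c.\@c.\@c.\@ iterations.
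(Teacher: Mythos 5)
Your overall strategy is the same as the paper's: start from a $\GCH$ ground model containing an ${\omega}_{1}$-Suslin tree $\Sa$ and a ${\kappa}^{+}$-Suslin tree, run a finite-support c.c.c.\ iteration of length ${\kappa}^{++}$ with bookkeeping over names for posets whose product with $\Sa$ is c.c.c., and then quote Theorem \ref{thm:difference}. The bookkeeping, the verification of ${2}^{\omega} = {\kappa}^{++}$ and $\MA(\Sa)$, and the preservation of $\Sa$ are all handled as in the paper. The problem is the step you yourself identify as the principal obstacle: preservation of the ${\kappa}^{+}$-Suslin tree $\Ta$ for $\kappa \geq {\omega}_{1}$. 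You reduce to the claim that every c.c.c.\ poset of size at most ${\kappa}^{+}$ preserves ${\kappa}^{+}$-Suslinness and call this a ``standard preservation fact,'' justified by counting possible values and a pressing-down argument. This is a genuine gap: the claim is not a standard fact one can cite, and the sketch does not prove it. Counting possible values works when the forcing has size at most $\kappa$ (each condition $p$ decides a ground-model antichain ${A}_{p} = \{t : p \forces t \in \mathring{A}\}$ of size at most $\kappa$, and the union over all $p$ then has size at most $\kappa$), but with $\lc \P \rc = {\kappa}^{+}$ the union of the candidate values has size ${\kappa}^{+}$ and is in general \emph{not} an antichain in the ground model, so no contradiction follows. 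Nor is there any regressive function in sight: after thinning so that the $\alpha$-th element of the named antichain has height at least $\alpha$, the heights give nothing to press down on, and the assignment of elements to the conditions forcing them in is not regressive with respect to any ordering for which Fodor applies. As written, the key step is unproved.

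The fact you want is actually true, but the proof goes through a different mechanism, which is exactly what the paper does: one shows that $\Ta \times {\P}_{{\kappa}^{++}}$ is ${\kappa}^{+}$-c.c.\ in $\V$. Given a putative family $\langle \langle {s}_{\alpha}, {p}_{\alpha}\rangle : \alpha < {\kappa}^{+}\rangle$ of pairwise incompatible conditions, one forces with $\Ta$ itself (which is ${\kappa}^{+}$-c.c.) to obtain a generic branch containing ${s}_{\alpha}$ for cofinally many $\alpha$, extracts in $\V$ a set $Y$ of order type ${\omega}_{1}$ such that the ${s}_{\alpha}$ for $\alpha \in Y$ are pairwise comparable (here ${\omega}_{1} < {\kappa}^{+}$ is used), and concludes that $\{{p}_{\alpha} : \alpha \in Y\}$ is an uncountable antichain in ${\P}_{{\kappa}^{++}}$, contradicting the c.c.c. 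Note that this argument uses only the c.c.c.\ of the whole iteration and no bound on its size, so your reduction to a ${\P}_{\gamma}$-name with $\lc {\P}_{\gamma} \rc \leq {\kappa}^{+}$, while correct, is also doing no work. You should replace the appeal to the ``standard fact'' by this product argument (or an equivalent one); you should also say a word about why no ${\kappa}^{+}$-chain is added, which the ${\kappa}^{+}$-c.c.\ of the product handles after pruning the tree so that every node has ${\kappa}^{+}$ many successors.
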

 \begin{proof}
  It needs to be seen that the hypotheses of Theorem \ref{thm:coloring} are consistent.
 Let $\V$ be a ground model satisfying $\GCH$, and suppose that in $\V$ $\Sa$ is an ${\omega}_{1}$-Suslin tree and that $\Ta$ is a ${\kappa}^{+}$-Suslin tree. 
  Using a bookkeeping device to ensure that all names for posets of size at most ${\kappa}^{+}$ are eventually considered, build a FS iteration of c.\@c.\@c.\@ posets $\langle {\P}_{\alpha}, {\mathring{\Q}}_{\alpha}: \alpha \leq {\kappa}^{++}\rangle$ as follows.
  At a stage $\alpha < {\kappa}^{++}$, if the object handed by the bookkeeping device is a full ${\P}_{\alpha}$-name $\mathring{\Q}$ for a poset such that ${\forces}_{\alpha} \; ``\mathring{\Q} \times \Sa \ \text{is c.\@c.\@c.\@}''$, then let ${\mathring{\Q}}_{\alpha} = \mathring{\Q}$.
  Otherwise let ${\mathring{\Q}}_{\alpha}$ be a full ${\P}_{\alpha}$-name for the trivial poset.
  Let $G$ be a $(\V, {\P}_{{\kappa}^{++}})$-generic filter.
  It is well-known that $\Sa$ remains an ${\omega}_{1}$-Suslin tree in $\V\[G\]$.
  Standard arguments show that ${2}^{\omega} = {\kappa}^{++}$ and $\MA(\Sa)$ hold in $\V\[G\]$.
  Thus if ${\kappa}^{+} = {\omega}_{1}$, then all the required statements hold in $\V\[G\]$.
  So assume that ${\kappa}^{+} > {\omega}_{1}$.
  It suffices to check that $\Ta$ remains a ${\kappa}^{+}$-Suslin tree in $\V\[G\]$.
  We check that $\Ta \times {\P}_{{\kappa}^{++}}$ is ${\kappa}^{+}$-c.\@c.\@ in $\V$.
  Suppose for a contradiction that $\langle \langle {s}_{\alpha}, {p}_{\alpha} \rangle: \alpha < {\kappa}^{+}\rangle \in \V$ is a sequence of pairwise incompatible elements of $\Ta \times {\P}_{{\kappa}^{++}}$.
  As $\Ta$ is ${\kappa}^{+}$-c.\@c.\@ in $\V$, there is a $(\V, \Ta)$-generic filter $H$ such that $\{\alpha < {\kappa}^{+}: {s}_{\alpha} \in H\}$ is cofinal in ${\kappa}^{+}$.
  Using the fact that $\Ta$ is a ${\kappa}^{+}$-Suslin tree in $\V$ and the fact that ${\omega}_{1} < {\kappa}^{+}$, find a $Y \in \V$ such that the following hold in $\V$: $Y \subset {\kappa}^{+}$, $\otp(Y) = {\omega}_{1}$ and $\forall \alpha, \beta \in Y \[\alpha < \beta \implies {s}_{\alpha} \; \not\perp \; {s}_{\beta}\]$.
  However, this implies that $\seq{p}{\alpha}{\in}{Y}$ is a sequence of pairwise incompatible elements of ${\P}_{{\kappa}^{++}}$, which contradicts the fact that ${\P}_{{\kappa}^{++}}$ is c.\@c.\@c.\@ in $\V$.
  This completes the proof that $\Ta$ remains a ${\kappa}^{+}$-Suslin tree in $\V\[G\]$.
 \end{proof}
 As mentioned in the introduction, Todorcevic~\cite{realpartition} distinguished between the relations $\c \rightarrow {\left( \c, \omega + 2 \right)}^{2}$ and $\c \rightarrow {\left( \c, (\omega : 2) \right)}^{2}$ using a measurable cardinal.
 Corollary \ref{cor:difference} does not seem to address this.
 \begin{Question} \label{q:partition5}
  Is $\ZFC + \c \rightarrow {\left( \c, (\omega : 2) \right)}^{2} + \c \not\rightarrow {\left( \c, \omega + 2 \right)}^{2}$ consistent relative to just $\ZFC$?
 \end{Question}
\section{Consistency of $\b \rightarrow {\left( \b, \alpha \right)}^{2}$ from a measurable cardinal} \label{sec:measurable}
We give a positive answer to Question \ref{q:partition2} relative to a measurable cardinal in this section.
The analysis of certain strong chain conditions will play a key role.
\begin{Def} \label{def:strongcc}
 We define the following strong forms of the countable chain condition on a poset:
 \begin{enumerate}
  \item
  we say that a poset $\P$ is \emph{$\sigma$-linked} if there exists a sequence $\seq{\P}{n}{\in}{\omega}$ such that $\P = {\bigcup}_{n \in \omega} {\P}_{n}$ and $\forall n \in \omega \forall p, p' \in {\P}_{n}\[p \; {\not\perp}_{\P} \; p'\]$.
  \item
   we say that a poset $\P$ has the \emph{$\sigma$-bounded-c.\@c.\@} if there exist a sequence $\seq{\P}{n}{\in}{\omega}$ and a function $f \in \BS$ such that:
 \begin{enumerate}
  \item[(a)]
  $\P = {\bigcup}_{n \in \omega}{{\P}_{n}}$;
  \item[(b)]
  for each $n \in \omega$, if $\seq{p}{i}{<}{f(n) + 1}$ is a sequence of elements of ${\P}_{n}$, then $\exists i < j < f(n) + 1$ such that ${p}_{i} \; {\not\perp}_{\P} \; {p}_{j}$.
 \end{enumerate}
  \item
  we say that a poset $\P$ has the \emph{$\sigma$-finite-c.\@c.\@} if there exists a sequence $\seq{\P}{n}{\in}{\omega}$ such that $\P = {\bigcup}_{n \in \omega}{{\P}_{n}}$ and for each $n \in \omega$, if $\seq{p}{i}{<}{\omega}$ is a sequence of elements of ${\P}_{n}$, then $\exists i < j < \omega\[{p}_{i} \; {\not\perp}_{\P} \; {p}_{j}\]$.   
 \end{enumerate}
\end{Def}
It is clear that $(1) \implies (2) \implies (3)$ and that $(3)$ implies that $\P$ is c.\@c.\@c.
It is known that none of these implications reverse.
\begin{Def} \label{def:knast}
 A poset $\P$ is said to have the \emph{$\kappa$-Knaster} property if for any sequence $\seq{p}{\alpha}{<}{\kappa} \subset \P$, there exists $A \in {\[\kappa\]}^{\kappa}$ such that ${p}_{\alpha} \; {\not\perp}_{\P} \; {p}_{\beta}$, for all $\alpha, \beta \in A$.
\end{Def}
\begin{Lemma} \label{lem:knast}
If $\P$ has the $\sigma$-finite-c.\@c.\@, then $\P$ has the $\kappa$-Knaster property for all $\kappa$ with $\cf(\kappa) > \omega$. 
\end{Lemma}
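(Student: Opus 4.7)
The plan is to exploit the countable decomposition witnessing the $\sigma$-finite-c.c.\@ together with the Erd{\H o}s-Dushnik-Miller partition relation $\kappa \rightarrow {(\kappa, \omega)}^{2}$, which holds for every infinite cardinal and was already recalled in the introduction.

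Fix a sequence $\seq{p}{\alpha}{<}{\kappa}$ of elements of $\P$, and fix a decomposition $\P = {\bigcup}_{n \in \omega}{{\P}_{n}}$ as in Definition \ref{def:strongcc}(3). First I would observe that $\kappa = {\bigcup}_{n \in \omega}\{\alpha < \kappa : {p}_{\alpha} \in {\P}_{n}\}$. Since $\cf(\kappa) > \omega$, one of the sets in this countable union must have size $\kappa$; fix such $n$ and let $Y \in {\[\kappa\]}^{\kappa}$ be the corresponding index set, so that $\{{p}_{\alpha}: \alpha \in Y\} \subset {\P}_{n}$.

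Next I would define a coloring $c : {\[Y\]}^{2} \rightarrow 2$ by setting $c(\{\alpha, \beta\}) = 1$ if ${p}_{\alpha} \; {\perp}_{\P} \; {p}_{\beta}$ and $c(\{\alpha, \beta\}) = 0$ otherwise. Applying the Erd{\H o}s-Dushnik-Miller theorem $\kappa \rightarrow {(\kappa, \omega)}^{2}$ to $Y$ (which has order type $\kappa$), we obtain either a $0$-homogeneous $A \in {\[Y\]}^{\kappa}$ or a $1$-homogeneous $Z \subset Y$ of order type $\omega$. In the latter case, $\seq{p}{\alpha}{\in}{Z}$ enumerates (as an $\omega$-sequence) pairwise incompatible elements of ${\P}_{n}$, directly contradicting clause (3) of Definition \ref{def:strongcc}. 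Therefore the former alternative must hold, and $A$ witnesses the $\kappa$-Knaster property for the given sequence.

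There is no substantial obstacle here; the only points demanding a little care are verifying that the union $\{\alpha : {p}_{\alpha} \in {\P}_{n}\}$ has size $\kappa$ (which uses precisely the hypothesis $\cf(\kappa) > \omega$) and checking that an order-type-$\omega$ $1$-homogeneous set genuinely produces an $\omega$-sequence of pairwise incompatible elements inside a single piece ${\P}_{n}$, since the $\sigma$-finite-c.c.\@ forbids such sequences only within the pieces.
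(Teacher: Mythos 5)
Your proof is correct and is essentially the same as the paper's: both pass to a single piece $\P_n$ containing $\kappa$-many of the $p_\alpha$ using $\cf(\kappa) > \omega$, color pairs by compatibility, and apply $\kappa \rightarrow (\kappa, \omega)^2$, ruling out the second alternative via the $\sigma$-finite-c.c.
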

\begin{proof}
Suppose $\seq{\P}{n}{\in}{\omega}$ witnesses that $\P$ has the $\sigma$-finite.c.\@c.
Let $\seq{p}{\alpha}{<}{\kappa} \subset \P$ be any sequence.
Since $\cf(\kappa) > \omega$, there exist $n \in \omega$ and $B \in {\[\kappa\]}^{\kappa}$ such that ${p}_{\alpha} \in {\P}_{n}$, for all $\alpha \in B$.
Define $c: {\[B\]}^{2} \rightarrow 2$ by the condition that $c(\{\alpha, \beta\}) = 0$ iff ${p}_{\alpha} \; {\not\perp}_{\P} \; {p}_{\beta}$, for every $\alpha, \beta \in B$ with $\alpha < \beta$.
There cannot be a $1$-homogeneous set of order-type $\omega$.
So by the partition relation $\kappa \rightarrow {\left( \kappa, \omega \right)}^{2}$, there is a $0$-homogeneous set $A \in {\[B\]}^{\kappa}$.
Clearly, $\seq{p}{\alpha}{\in}{A}$ is as needed.
\end{proof}
\begin{Def} \label{def:unifandnormal}
For a limit ordinal $\delta$, a proper ideal $\I$ on $\delta$ is said to be \emph{uniform} if for all $\alpha < \delta$, $\alpha \in \I$.
A uniform ideal $\I$ on $\delta$ is called \emph{normal} if for any $X \in {\I}^{+}$ and any regressive map $f: X \rightarrow \delta$, there exists $\alpha \in \delta$ such that ${f}^{-1}(\{\alpha\}) \in {\I}^{+}$.
\end{Def}
\begin{Def} \label{def:fubinipower}
 Let $\delta$ be a limit ordinal and $\II$ a uniform ideal on $\delta$.
 We define ideals ${\II}^{n}$ on ${\[\delta\]}^{n}$ by induction on $1 \leq n < \omega$.
 By identifying ${\[\delta\]}^{1}$ with $\delta$, we define ${\II}^{1} = \II$.
 Now given ${\II}^{n}$ on ${\[\delta\]}^{n}$, where $1 \leq n < \omega$, we define ${\II}^{n + 1}$ to be
 \begin{align*}
  \left\{ X \subset {\[\delta\]}^{n + 1}: \left\{ \alpha < \delta: {(X)}_{\alpha} \in {\left( {\II}^{n} \right)}^{+} \right\} \in \II \right\}, 
 \end{align*}
 where ${(X)}_{\alpha} = \{F \setminus \{\alpha\}: F \in X \wedge \min(F) = \alpha \}$.
\end{Def}
It is easily checked that ${\II}^{n}$ is an ideal on ${\[\delta\]}^{n}$.
Moreover for $1 \leq n < \omega$ and $X \subset {\[\delta\]}^{n + 1}$, $X \in {\left( {\II}^{n + 1} \right)}^{+}$ iff $\left\{ \alpha < \delta: {(X)}_{\alpha} \in {\left( {\II}^{n} \right)}^{+}\right\} \in {\II}^{+}$, and because of the assumption that $\II$ is uniform, $X \in {\left( {\II}^{n + 1} \right)}^{\ast}$ iff $\left\{\alpha < \delta: {(X)}_{\alpha} \in {\left( {\II}^{n} \right)}^{\ast}\right\} \in {\II}^{\ast}$.
Lemmas \ref{lem:normal} and \ref{lem:groundcovering} are folklore.
We include a proof for the reader's convenience.
\begin{Lemma} \label{lem:normal}
 Suppose $\kappa$ is an uncountable cardinal and $\I$ is a normal ideal on $\kappa$.
 If $\P$ is any c.\@c.\@c.\@ forcing, then $\forces``\; \I \ \text{generates a normal ideal on} \ \kappa\;''$.
\end{Lemma}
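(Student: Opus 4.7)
The plan is to verify, working in the extension $V[G]$, that the ideal $\bar\I = \{X \subseteq \kappa : \exists A \in \I\,(X \subseteq A)\}$ generated by $\I$ is both $\kappa$-complete and normal. The uniform tool behind both verifications is the following consequence of the c.c.c.: any $\P$-name $\dot A$ that is forced to be an element of $\check\I$ (and hence of $V$) takes at most countably many values below any fixed condition $p$, because a maximal antichain below $p$ deciding $\dot A$ must be countable; since normality of $\I$ in $V$ entails that $\I$ is $\omega_1$-complete, the union of these countably many values is a single $B \in \I$ with $p \forces \dot A \subseteq \check B$.

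Using this reduction, $\kappa$-completeness of $\bar\I$ in $V[G]$ is routine: given $\gamma < \kappa$ and a sequence $\langle \dot X_\beta : \beta < \gamma\rangle$ of names forced by some $p$ to witness membership in $\bar\I$, I will invoke the maximal principle to pick names $\dot A_\beta$ for elements of $\I$ covering them, pass to ground-model covers $B_\beta \in \I$ by the reduction above, and then use the $\gamma$-completeness of $\I$ in $V$ to conclude that $\bigcup_{\beta < \gamma} B_\beta \in \I$ covers $\bigcup_\beta \dot X_\beta$ below $p$.

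For normality, fix a condition $p$ forcing that $\dot X$ is $\bar\I$-positive and that $\dot f : \dot X \to \kappa$ is regressive, and assume for contradiction that $p \forces \forall \alpha\,(\dot f^{-1}(\check\alpha) \in \bar\I)$; deriving a contradiction suffices, since then a density argument below any $p \in G$ yields a suitable fibre in $V[G]$. For each $\alpha < \kappa$ the maximal principle supplies a name $\dot A_\alpha$ with $p \forces \dot A_\alpha \in \check\I \wedge \dot f^{-1}(\check\alpha) \subseteq \dot A_\alpha$, and the c.c.c./$\omega_1$-completeness reduction gives a ground-model $B_\alpha \in \I$ with $p \forces \dot f^{-1}(\check\alpha) \subseteq \check B_\alpha$; using regressivity of $\dot f$ I may further replace $B_\alpha$ by $B_\alpha \cap (\alpha, \kappa)$ without losing this property.

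Set $B = \bigcup_{\alpha < \kappa} B_\alpha \in V$. Since $p \forces \dot X \subseteq \check B$ and $p$ forces that $\dot X$ is not covered by any element of $\I$, the set $B$ must be $\I$-positive in $V$. In $V$, define the regressive map $h : B \to \kappa$ by $h(\beta) = \min\{\alpha : \beta \in B_\alpha\}$; it is regressive because $B_\alpha \subseteq (\alpha, \kappa)$. Applying normality of $\I$ in $V$ to $h$ produces $\alpha^*$ with $h^{-1}(\{\alpha^*\}) \in \I^+$, contradicting $h^{-1}(\{\alpha^*\}) \subseteq B_{\alpha^*} \in \I$. The one delicate step that genuinely uses the chain condition is the passage from the name $\dot A_\alpha$ to the single ground-model cover $B_\alpha$, and I expect the cleanest articulation of this reduction, together with the observation that it also supplies $\omega_1$-completeness inside $V[G]$ for free, to be the main obstacle in writing out the proof.
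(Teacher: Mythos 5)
Your proposal is correct and follows essentially the same route as the paper: the heart of both arguments is the c.c.c.\ reduction producing, below a fixed condition, a single ground-model cover in $\I$ for any name forced into the generated ideal (the paper's Claim \ref{claim:normal1}), followed by an application of normality of $\I$ in $\V$. The only cosmetic difference is at the end, where the paper invokes closure of normal ideals under diagonal unions directly via the set $\{0\}\cup\{\beta<\kappa : \exists\alpha<\beta\,[\beta\in Y_\alpha]\}$, whereas you inline the proof of that closure by passing to the full union and applying the regressive-function formulation.
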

\begin{proof}
If $G$ is $(\V, \P)$-generic, then in $\VG$, $\II = \{X \subset \kappa: \exists Y \in \I\[X \subset Y\]\}$ is the ideal generated by $\I$;
it is a uniform ideal on $\kappa$.
 Let $\mathring{\II}$ be a $\P$-name in $\V$ for the ideal generated by $\I$.
 Fix $\mathring{X} \in \VP$ and $p \in \P$.
 \begin{Claim} \label{claim:normal1}
  If $p \forces \mathring{X} \subset \kappa \wedge \mathring{X} \in \mathring{\II}$, then there is $Y \in \I$ such that $p \forces \mathring{X} \subset Y$.
 \end{Claim}
 \begin{proof}
 Since $p \forces \exists Y \in \I\[\mathring{X} \subset Y\]$ and since $\P$ is c.\@c.\@c.\@, we may find a maximal antichain $\{{p}_{n}: n \in \omega\}$ below $p$ and a set $\{{Y}_{n}: n \in \omega\} \subset \I$ such that for each $n \in \omega$, ${p}_{n} \forces \mathring{X} \subset {Y}_{n}$.
 By normality, $Y = \bigcup\{{Y}_{n}: n \in \omega\} \in \I$.
 Now for any $p' \leq p$, there exist $p'' \leq p'$ and $n \in \omega$ such that $p'' \forces \mathring{X} \subset {Y}_{n}$, whence $p'' \forces \mathring{X} \subset Y$.
 Thus $p \forces \mathring{X} \subset Y$.
 \end{proof}
 Now suppose $\mathring{f} \in \VP$ and $p \forces``\mathring{X} \subset \kappa \ \text{and} \ \mathring{f} \ \text{is a regressive map from} \ \mathring{X} \ \text{to} \ \kappa''$.
 For each $\alpha < \kappa$, let ${\mathring{X}}_{\alpha}$ be a $\P$-name such that $p \forces {\mathring{X}}_{\alpha} = {\mathring{f}}^{-1}(\{\alpha\})$.
 Assume that for all $\alpha < \kappa$, $p \forces {\mathring{X}}_{\alpha} \in \mathring{\II}$.
 Applying the claim find ${Y}_{\alpha} \in \I$ such that $p \forces {\mathring{X}}_{\alpha} \subset {Y}_{\alpha}$.
 By normality, $Y = \{0\} \cup \{\beta < \kappa: \exists \alpha < \beta \[\beta \in {Y}_{\alpha}\]\} \in \I$.
 Now $p \forces \mathring{X} \subset Y$.
 For otherwise, there would be $p' \leq p$, $\beta \in \kappa \setminus Y$, and $\alpha \in \kappa$ such that $p' \forces \beta \in \mathring{X} \wedge \mathring{f}(\beta) = \alpha$.
 It would follow that $\beta \in {Y}_{\alpha}$, and that since $\beta > 0$, $\alpha < \beta$.
 However this would imply that $\beta \in Y$, which is a contradiction. 
 Thus we conclude $p \forces \mathring{X} \subset Y$, and so $p \forces \mathring{X} \in \mathring{\II}$.
\end{proof}
\begin{Lemma} \label{lem:groundcovering}
 Suppose $\kappa$ is an uncountable cardinal and $\I$ is a normal ideal on $\kappa$.
 Let $\P$ be a c.\@c.\@c.\@ poset.
 Let $\mathring{\II}$ be a $\P$-name for the ideal generated by $\I$.
 Fix $1 \leq n < \omega$, $\mathring{X} \in \VP$, and $p \in \P$.
 If $p \forces \mathring{X} \subset {\[\kappa\]}^{n} \wedge \mathring{X} \in {\mathring{\II}}^{n}$, then there exists $Y \in {\I}^{n}$ such that $p \forces \mathring{X} \subset Y$.
\end{Lemma}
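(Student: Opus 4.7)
The plan is to induct on $n$. The base case $n = 1$ is \emph{exactly} Claim \ref{claim:normal1} from the proof of Lemma \ref{lem:normal}: using c.\@c.\@c.\@, refine $p$ to a maximal antichain $\{{p}_{m} : m \in \omega\}$ below it with each ${p}_{m}$ deciding a cover ${Y}_{m} \in \I$ of $\mathring{X}$, and take ${\bigcup}_{m}{{Y}_{m}} \in \I$, which exists in $\I$ by the normality (hence $\sigma$-completeness) of $\I$. Everything else is inductive bookkeeping driven by the recursive definition of ${\II}^{n + 1}$.

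For the inductive step, assume the lemma for $n$ and suppose $p \forces \mathring{X} \subset {\[\kappa\]}^{n + 1} \wedge \mathring{X} \in {\mathring{\II}}^{n + 1}$. Unpacking Definition \ref{def:fubinipower}, $p$ forces the set $\mathring{Z} := \{\alpha < \kappa : {(\mathring{X})}_{\alpha} \in {({\mathring{\II}}^{n})}^{+}\}$ to lie in $\mathring{\II}$. Applying the base case to a $\P$-name for $\mathring{Z}$ yields $Z \in \I$ with $p \forces \mathring{Z} \subset Z$; equivalently, for each fixed $\alpha \in \kappa \setminus Z$ the condition $p$ forces ${(\mathring{X})}_{\alpha} \in {\mathring{\II}}^{n}$. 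The slice ${(\mathring{X})}_{\alpha}$ is represented by a standard $\P$-name for a subset of ${\[\kappa \setminus (\alpha + 1)\]}^{n}$, so the inductive hypothesis delivers ${Y}_{\alpha} \in {\I}^{n}$ with $p \forces {(\mathring{X})}_{\alpha} \subset {Y}_{\alpha}$.

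Now I would assemble the cover by setting
\[
Y \;=\; \{F \in {\[\kappa\]}^{n + 1} : \min(F) \in Z\} \;\cup\; \{F \in {\[\kappa\]}^{n + 1} : \min(F) = \alpha \in \kappa \setminus Z \text{ and } F \setminus \{\alpha\} \in {Y}_{\alpha}\}.
\]
Then ${(Y)}_{\alpha}$ is all of ${\[\kappa \setminus (\alpha + 1)\]}^{n}$ when $\alpha \in Z$ and is exactly ${Y}_{\alpha} \in {\I}^{n}$ otherwise; hence $\{\alpha < \kappa : {(Y)}_{\alpha} \in {({\I}^{n})}^{+}\} \subset Z \in \I$, giving $Y \in {\I}^{n + 1}$. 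A case split on whether $\min(F) \in Z$ then shows $p \forces \mathring{X} \subset Y$, completing the induction.

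The hardest part will be purely a matter of bookkeeping: producing, for each $\alpha$, a single $\P$-name for ${(\mathring{X})}_{\alpha}$ on which to invoke the inductive hypothesis, and then gluing the ground-model witnesses ${Y}_{\alpha}$ together so that the resulting $Y$ genuinely lives in the $(n+1)$-st Fubini power ${\I}^{n+1}$. Conceptually nothing new enters beyond the base case; the c.\@c.\@c.\@ and normality of $\I$ are used only once, at the top of the induction.
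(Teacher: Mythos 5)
Your proposal is correct and follows the same route as the paper's proof: induct on $n$, use Claim \ref{claim:normal1} for the base case and again to cover the name for $\{\alpha : {(\mathring{X})}_{\alpha} \in {({\mathring{\II}}^{n})}^{+}\}$ by some $B \in \I$, then apply the inductive hypothesis to a name for each slice ${(\mathring{X})}_{\alpha}$ with $\alpha \notin B$ and glue the resulting covers into $Y = \{s \in {\[\kappa\]}^{n+1} : s \setminus \{\min(s)\} \in {D}_{\min(s)}\}$. The only detail the paper makes explicit that you defer as ``bookkeeping'' is the use of the maximal principle to produce the names $\mathring{A}$ and ${\mathring{C}}_{\alpha}$, which is routine.
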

\begin{proof}
 By induction on $n$.
 When $n = 1$, Claim \ref{claim:normal1} gives what is needed.
 Assume the statement is true for $n$ and consider $n + 1$.
 By the maximal principle, there is $\mathring{A} \in \VP$ such that $p \forces \left\{\alpha < \kappa: {\left(\mathring{X}\right)}_{\alpha} \in {\left( {\mathring{\II}}^{n} \right)}^{+}\right\} = \mathring{A} \in \mathring{\II}$.
 Applying the case $n = 1$, which has already been proved, find $B \in \I$ such that $p \forces \mathring{A} \subset B$.
 We are going to define a sequence $\seq{D}{\alpha}{\in}{\kappa}$ as follows.
 First consider the case when $\alpha \in \kappa \setminus B$.
 Applying the maximal principle again, there is a ${\mathring{C}}_{\alpha} \in \VP$ such that $p \forces \left\{ F \setminus \{\alpha\}: F \in \mathring{X} \wedge \min(F) = \alpha\right\} = {\mathring{C}}_{\alpha} \in {\mathring{\II}}^{n}$.
 Once again there is ${D}_{\alpha} \in {\I}^{n}$ such that $p \forces {\mathring{C}}_{\alpha} \subset {D}_{\alpha}$.
 Next if $\alpha \in B$, then set ${D}_{\alpha} = {\[\kappa\]}^{n}$.
 Now define $Y = \{s \in {\[\kappa\]}^{n + 1}: s\setminus\{\min(s)\} \in {D}_{\min(s)}\}$.
 Then for any $\alpha < \kappa$, ${(Y)}_{\alpha} \subset {D}_{\alpha}$, and so $\{\alpha < \kappa: {(Y)}_{\alpha} \in {\left( {\I}^{n} \right)}^{+}\} \subset B$.
 Therefore, $Y \in {\I}^{n + 1}$.
 It is not hard to see that $p \forces \mathring{X} \subset Y$.
 For suppose not.
 Let $G$ be a $(\V, \P)$-generic filter with $p \in G$ such that in $\VG$, there exists $F \in \mathring{X}\[G\]$ with $F \notin Y$.
 Let $\alpha = \min(F)$.
 If $\alpha \in B$, then $F \setminus \{\alpha\} \in {\[\kappa\]}^{n} = {D}_{\alpha}$, whence $F \in Y$.
 On the other hand if $\alpha \in \kappa \setminus B$, then $F \setminus \{\alpha\} \in {\mathring{C}}_{\alpha}\[G\] \subset {D}_{\alpha}$, and so once again $F \in Y$.
 Thus both cases lead to a contradiction.
\end{proof}
\begin{Lemma} \label{lem:sigmaboundedquotients}
 Suppose $\kappa$ is a measurable cardinal and $\U$ is a normal measure on $\kappa$.
 Let $\I = {\U}^{\ast}$.
 Suppose $\langle {\P}_{\alpha}, {\mathring{\Q}}_{\alpha}: \alpha \leq \kappa \rangle$ is an FS iteration such that for each $\alpha < \kappa$, ${\forces}_{\alpha} ``\;{\mathring{\Q}}_{\alpha} \ \text{is} \ \sigma\text{-linked and} \ \lc {\mathring{\Q}}_{\alpha} \rc < \kappa\;''$.
 Let $G$ be $(\V, {\P}_{\kappa})$-generic.
 In $\VG$ define $\II$ to be the ideal on $\kappa$ generated by $\I$.
 Then for each $1 \leq n < \omega$, ${\left( {\II}^{n} \right)}^{+}$ has the $\sigma$-bounded-c.\@c.
\end{Lemma}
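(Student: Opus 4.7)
I would argue by induction on $n$. The step from $n$ to $n+1$ should be a relatively routine unwinding of the Fubini definition: given a putative witness to failure of $\sigma$-bounded-c.c.\ in $(\II^{n+1})^+$ in $\V[G]$, slicing by the first coordinate produces, for $\II$-many $\alpha < \kappa$, a witness to failure in $(\II^n)^+$ via the slices $(X)_\alpha$, which is ruled out by the inductive hypothesis after a Fubini-style averaging. So the whole difficulty lives at the base case $n=1$, and I would focus there.

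For $n=1$, the plan is to decompose $\II^+$ in $\V[G]$ into countably many pieces indexed by a natural ``ground-model type'' extracted from a forcing name. For each $X \in \II^+$ fix a nice ${\P}_\kappa$-name $\mathring{X}$ and a condition $p_X \in G$ that forces $\mathring{X} \in \mathring{\II}^+$. Using that each $\mathring{\Q}_\alpha$ is forced to be $\sigma$-linked via some fixed ${\P}_\alpha$-name $\langle \mathring{\Q}_\alpha^k : k \in \omega\rangle$ of linked pieces, strengthen $p_X$ so that for each $\alpha \in \mathrm{supp}(p_X)$ it decides an index $k_\alpha \in \omega$ with $p_X\restrict\alpha \;{\forces}_{\alpha}\; p_X(\alpha) \in \mathring{\Q}_\alpha^{k_\alpha}$. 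The \emph{type} $t(X)$ is the order-isomorphism class of the pair $(\mathrm{supp}(p_X), \alpha \mapsto k_\alpha)$; there are only countably many such types, and putting $X$ into the piece $\mathcal{X}_{t(X)}$ yields the required countable decomposition $\II^+ = \bigcup_{t \in \omega}\mathcal{X}_t$.

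The bounding function $f$ would be produced as follows. Suppose $X_0, \dots, X_N$ lie in $\mathcal{X}_t$ and are pairwise $\II$-incompatible in $\V[G]$. Among the conditions $p_{X_i}$ of common type $t$, the $\sigma$-linkedness of each iterand guarantees that if two of them have colorings agreeing on the intersection of their supports then they admit a common refinement $r \in {\P}_\kappa$. Applying Claim~\ref{claim:normal1} to $r$ and the name $\mathring{X}_i \cap \mathring{X}_j$ pulls the incompatibility into a set $Y_{i,j} \in \I$ with $r \forces \mathring{X}_i \cap \mathring{X}_j \subseteq \check{Y}_{i,j}$. Because $\U$ is $\kappa$-complete in $\V$, no fixed condition can generically avoid $\bigcup_{j \neq i} Y_{i,j}$ while still forcing $\mathring{X}_i \in \mathring{\II}^+$, provided the number of relevant $j$'s is too large — and ``too large'' turns out to be a concrete finite number determined by a $\Delta$-system/Ramsey cleanup among the finite supports of the $p_{X_i}$, all of which have the same abstract shape dictated by $t$. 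That cleanup bound defines $f(t)$.

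The main obstacle, and the place where care is needed, is precisely the last step: turning the qualitative statement ``$< \kappa$ obstacles can be swallowed by $\U$'' into a concrete finite bound $f(t)$ that depends only on the countable type $t$ and not on the $X_i$. Pairs $(i,j)$ whose colorings disagree on a common coordinate are ${\P}_\kappa$-incompatible and therefore contribute \emph{no} ground-model set $Y_{i,j}$, and so the accounting has to be driven purely by the $\Delta$-system structure on supports coming from the common type $t$. This is where the $\sigma$-linked hypothesis (rather than just c.c.c.) on each iterand is used essentially, since it is what gives a countable palette of colors and hence a countable type space in the first place.
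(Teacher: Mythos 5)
There is a genuine gap, and it is located exactly where you declare the problem easy: the reduction to $n=1$ by a ``routine Fubini induction'' does not work. Suppose $X_0,\dotsc,X_N$ are pairwise $\II^{n+1}$-almost-disjoint sets of the same type $(i,j)$, where $i$ indexes a piece of a $\sigma$-bounded-c.c.\ decomposition of $\II^{+}$ containing $S_{X_k}=\{\alpha: (X_k)_\alpha \in \mathcal{B}_j\cap(\II^{n})^{+}\}$. To transfer the contradiction down a level you would need a \emph{single} $\alpha$ lying in $g(j)+1$ many of the sets $S_{X_k}$ simultaneously (and avoiding the $\II$-small sets where slices intersect positively). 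But $\sigma$-bounded-c.c.\ only tells you that among sufficiently many sets in one piece, \emph{two} have $\II$-positive intersection; pairwise positive intersection gives no control whatsoever over triple or higher intersections, which can be empty. So no finite bound for $(\II^{n+1})^{+}$ falls out of the bounds for $\II^{+}$ and $(\II^{n})^{+}$. This is precisely why the paper does \emph{not} induct on $n$: it runs one uniform argument for each $n$, and the step that replaces your failed intersection argument is a descent to the ground model, where $\I^{n}$ is a \emph{maximal} ideal (dual to the Fubini power of the normal measure), so that the relevant large sets $Z_j$ lie in the filter $(\I^{n})^{\ast}$ and their finite intersection is automatically $(\I^{n})^{\ast}$-large, hence meets the complement of the covering set $Y\in\I^{n}$ supplied by Lemma \ref{lem:groundcovering} (which must itself be proved for all $n$, not just $n=1$).

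Your base case is close to the paper in spirit (the countable type space $(m_p,\tau_p)$ from $\sigma$-linkedness, and the sunflower-lemma computation of the bound from $\lc\suppt(p)\setminus\alpha\rc\le m+1$ and the $2^{m}$ possible positions of the kernel are both exactly right), but the decomposition is attached to the wrong object. Classifying $X$ by a single condition $p_X$ forcing $\mathring{X}\in\mathring{\II}^{+}$ gives you nothing to amalgamate: all the $p_{X_i}$ lie in $G$ and are already compatible. The contradiction has to come from forcing one specific point $s$ into $\mathring{X}_j\cap\mathring{X}_{j'}$ after $q\in G$ has forced $\mathring{X}_j\cap\mathring{X}_{j'}\subset Y$ with $Y\in\I^{n}$ in the ground model. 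For that you must classify $X$ by the type of the conditions that force \emph{individual points} into $X$: using $\lc\{p\restrict\alpha: p\in D\}\rc<\kappa$ and $\kappa$-completeness, an $(\II^{n})^{+}$-positive set of points of $X$ is forced in by conditions sharing a fixed type $(m,\tau)$ \emph{and} a fixed restriction $r_j$ below $\alpha=\sup(\suppt(q))+1$; then $Z_j=\{s:\exists p\in R(s,\mathring{X}_j)\,[p\restrict\alpha=r_j\wedge m_p=m\wedge\tau_p=\tau]\}$ is in $(\I^{n})^{\ast}$ by maximality, one picks $s\in\bigcap_j Z_j\setminus Y$, and only then does the sunflower argument on the witnessing conditions (whose colorings agree on the kernel automatically, since the kernel occupies the same positions in supports of identical type $\tau$) produce the amalgamation and the contradiction. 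Your closing worry about pairs ``contributing no $Y_{i,j}$'' is an artifact of this misassignment; the covering sets exist for all pairs because they come from the almost-disjointness of the $X_j$ in $\V\[G\]$, not from compatibility of conditions.
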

\begin{proof}
 Because the ${\P}_{\alpha}$ are c.\@c.\@c.\@ and because of the hypothesis that for all $\alpha < \kappa$  ${\forces}_{\alpha}``{\mathring{\Q}}_{\alpha} \ \text{is} \ \sigma\text{-linked and} \ \lc {\mathring{\Q}}_{\alpha} \rc < \kappa''$, we can find a sequence $\seq{\mathring{g}}{\alpha}{<}{\kappa}$ and a dense set $D \subset {\P}_{\kappa}$ such that:
 \begin{enumerate}
 \item
 for each $\alpha < \kappa$, ${\mathring{g}}_{\alpha}$ is a ${\P}_{\alpha}$-name, ${\forces}_{\alpha}\; {\mathring{g}}_{\alpha}: {\mathring{\Q}}_{\alpha} \rightarrow \omega$ and ${\forces}_{\alpha} \forall q, q' \in {\mathring{\Q}}_{\alpha}\[{\mathring{g}}_{\alpha}(q) = {\mathring{g}}_{\alpha}(q') \implies q \; {\not\perp}_{{\mathring{\Q}}_{\alpha}} \; q'\]$;
  \item
  for each $p \in D$, there exists a function ${\sigma}_{p}: \suppt(p) \rightarrow \omega$ such that $\forall \alpha \in \suppt(p)\[p\restrict\alpha \;{\forces}_{\alpha} \;{\mathring{g}}_{\alpha}(p(\alpha)) = {\sigma}_{p}(\alpha)\]$;
  \item
  for each $\alpha < \kappa$, $\lc \{p\restrict\alpha: p \in D \}\rc < \kappa$.
 \end{enumerate}
 For any $p \in D$, let ${m}_{p} = \otp(\suppt(p))$, and for each $i < {m}_{p}$ let $\suppt(p)(i)$ denote the $i$th element of $\suppt(p)$.
 Define a function ${\tau}_{p}: {m}_{p} \rightarrow \omega$ by ${\tau}_{p}(i) = {\sigma}_{p}(\suppt(p)(i))$, for each $i < {m}_{p}$.
 Let $G$ be $(\V, {\P}_{\kappa})$-generic.
 In $\VG$, $\II$ is a normal ideal because of Lemma \ref{lem:normal}; hence it is $\kappa$-complete.
 Now a simple induction on $1 \leq n < \omega$ shows that ${\II}^{n}$ is $\kappa$-complete.
 Returning to $\V$, for each $1 \leq n < \omega$, let ${\mathring{\LL}}_{n}$ be a full ${\P}_{\kappa}$-name denoting ${\left( {\II}^{n} \right)}^{+}$, and for each $\mathring{X} \in \dom({\mathring{\LL}}_{n})$ and $s \in {\[\kappa\]}^{n}$, let $R(s, \mathring{X}) = \{p \in D: p \forces s \in \mathring{X}\}$.
 Move back to $\VG$ and work there.
 We first point out a simple consequence of (3) and the $\kappa$-completeness of ${\II}^{n}$.
 If $X \in {\left( {\II}^{n} \right)}^{+}$ and $\alpha < \kappa$, then for any $\mathring{X} \in \dom({\mathring{\LL}}_{n})$ such that $\mathring{X}\[G\] = X$, there exist $q \in {\P}_{\alpha}$, $m \in \omega$, and $\tau \in {\omega}^{m}$ such that $\{s \in X: \exists p \in G \cap R(s, \mathring{X})\[p \restrict \alpha = q \wedge {m}_{p} = m \wedge {\tau}_{p} = \tau\]\} \in {\left( {\II}^{n} \right)}^{+}$.
 For each $m \in \omega$ and $\tau \in {\omega}^{m}$, define ${\F}_{m, \tau}$ by stipulating that $X \in {\F}_{m, \tau}$ iff $X \in {\left( {\II}^{n} \right)}^{+}$ and there exists $\mathring{X} \in \dom({\mathring{\LL}}_{n})$ such that 
 \begin{align*}
  \mathring{X}\[G\] = X \wedge \left\{s \in X: \exists p \in G \cap R(s, \mathring{X})\[{m}_{p} = m \wedge {\tau}_{p} = \tau \]\right\} \in {\left( {\II}^{n} \right)}^{+}.
 \end{align*}
 By the above remarks ${\left( {\II}^{n} \right)}^{+} = \bigcup\{{\F}_{m, \tau}: m \in \omega \wedge \tau \in {\omega}^{m}\}$.
 For $m, \tau$ with $m \in \omega$ and $\tau \in {\omega}^{m}$, let $a = {2}^{m}$ and $b = m + 1$.
 Define $f(m, \tau) =$
 \begin{align*}
  b!{a}^{b + 1}\left( 1 - \displaystyle\frac{1}{2!a} - \displaystyle\frac{2}{3!{a}^{2}} - \dotsb - \displaystyle\frac{b - 1}{b!{a}^{b - 1}}\right).
 \end{align*}
 We claim that $\langle {\F}_{m, \tau}: m \in \omega \wedge \tau \in {\omega}^{m}\rangle$ and $f$ witness that ${\left( {\II}^{n} \right)}^{+}$ has the $\sigma$-bounded-c.\@c.
 Suppose not.
 Fix $m \in \omega$ and $\tau \in {\omega}^{m}$.
 Let $\{{X}_{j}: j < f(m, \tau) + 1\} \subset {\F}_{m, \tau}$ and suppose that for all $j < j' < f(m, \tau) + 1$, ${X}_{j} \cap {X}_{j'} \in {\II}^{n}$.
 For each $j < f(m, \tau) + 1$, fix ${\mathring{X}}_{j} \in \dom({\mathring{\LL}}_{n})$ such that
 \begin{align*}
  {\mathring{X}}_{j}\[G\] = {X}_{j} \wedge \{s \in {X}_{j}: \exists p \in G \cap R(s, {\mathring{X}}_{j})\[{m}_{p} = m \wedge {\tau}_{p} = \tau\]\} \in {\left( {\II}^{n} \right)}^{+}.
 \end{align*}
 Applying Lemma \ref{lem:groundcovering}, find $q \in G$ and $Y \in {\I}^{n}$ such that for each $j < j' < f(m, \tau) + 1$, the relation $q \forces {\mathring{X}}_{j} \cap {\mathring{X}}_{j'} \subset Y$ holds in $\V$.
 Put $\alpha = \sup\{\xi + 1: \xi \in \suppt(q)\} < \kappa$.
 Because of (3) above, for each $j < f(m, \tau) + 1$, there exists ${r}_{j} \in {\P}_{\alpha}$ such that ${Y}_{j} = \{s \in {X}_{j}: \exists p \in G \cap R(s, {\mathring{X}}_{j})\[p \restrict \alpha = {r}_{j} \wedge {m}_{p} = m \wedge {\tau}_{p} = \tau\]\} \in {\left( {\II}^{n} \right)}^{+}$.
 Choose ${s}_{j} \in {Y}_{j}$ and ${p}^{\ast}_{j} \in G$ witnessing this.
 As $G$ is a filter, there is ${q}_{0} \in G$ such that ${q}_{0} \leq q$ and ${q}_{0}\restrict\alpha \leq {r}_{j}$, for each $j < f(m, \tau) + 1$.
 Back in $\V$, let ${Z}_{j} = \{s \in {\[\kappa\]}^{n}: \exists p \in R(s, {\mathring{X}}_{j})\[p\restrict\alpha = {r}_{j} \wedge {m}_{p} = m \wedge {\tau}_{p} = \tau\]\}$.
 Note that ${\I}^{n}$ is a maximal ideal in $\V$ and that ${\I}^{n} = {\Pset}^{\V}\left( {\[\kappa\]}^{n} \right) \cap {\II}^{n}$.
 It follows that ${Z}_{j} \in {\left( {\I}^{n} \right)}^{\ast}$.
 Let $Z = {\bigcap}_{j < f(m, \tau) + 1}{{Z}_{j}}$.
 Then $Z \setminus Y \neq 0$.
 Choose $s \in Z \setminus Y$ and for each $j < f(m, \tau) + 1$, choose ${p}_{j}$ witnessing that $s \in {Z}_{j}$.
 Thus for all $j < f(m, \tau) + 1$, $\lc \suppt({p}_{j}) \setminus \alpha \rc \leq m + 1$.
 By Theorem III of \cite{findeltasystem}, there exist $F \subset f(m, \tau) + 1$ and $S$ such that $\lc F \rc > {2}^{m}$ and $\forall j, j' \in F\[j < j' \implies \left( \suppt({p}_{j})\setminus \alpha \right) \cap \left( \suppt({p}_{j'})\setminus\alpha\right) = S\]$.
 For each $j \in F$, define ${x}_{j} = \{i < m: \suppt({p}_{j})(i) \in S\}$.
 As $\lc F \rc > {2}^{m}$, we can find $j, j' \in F$ with $j < j'$ and $x \in \Pset(m)$ such that ${x}_{j} = {x}_{j'} = x$.
 We are going to find $p \in {\P}_{\kappa}$ such that $p \leq q, {p}_{j}, {p}_{j'}$.
 This would be a contradiction because in $\V$, $p \forces s \in {\mathring{X}}_{j} \cap {\mathring{X}}_{j'} \subset Y$, and yet $s \in Z \setminus Y$.
 
 Define $p(\beta) = {q}_{0}(\beta)$, for all $\beta < \alpha$.
 Then $p \restrict \alpha \leq q\restrict\alpha, {p}_{j}\restrict\alpha, {p}_{j'}\restrict\alpha$.
 If $\beta \in \suppt({p}_{j}) \setminus \left( S \cup \alpha \right)$, then define $p(\beta) = {p}_{j}(\beta)$, and if $\beta \in \suppt({p}_{j'}) \setminus \left( S \cup \alpha \right)$, then define $p(\beta) = {p}_{j'}(\beta)$.
 Finally if $\beta \in \kappa\setminus\left( \alpha \cup \suppt({p}_{j}) \cup \suppt({p}_{j'})\right)$, then define $p(\beta) = {\mathring{\mathbbm{1}}}_{\beta}$.
 If $S = 0$, then the definition of $p$ is complete and $p$ is as required.
 Suppose $S \neq 0$.
 Note that $S = \{\suppt({p}_{j})(i): i \in x \} = \{\suppt({p}_{j'})(i): i \in x\}$ and that $\forall i \in x\[\suppt({p}_{j})(i) = \suppt({p}_{j'})(i)\]$.
 For ease of notation, denote $\suppt({p}_{j})(i)$ by ${\gamma}_{i}$, for each $i \in x$.
 We define $p({\gamma}_{i})$ by induction on $i \in x$.
 Fix $i \in x$ and suppose that $p\restrict {\gamma}_{i}$ has been defined in such a way that $p\restrict {\gamma}_{i}\leq {p}_{j}\restrict{\gamma}_{i}, {p}_{j'}\restrict{\gamma}_{i}$.
 Then $p\restrict{\gamma}_{i} \; {\forces}_{{\gamma}_{i}} \; {\mathring{g}}_{{\gamma}_{i}}({p}_{j}({\gamma}_{i})) = {\mathring{g}}_{{\gamma}_{i}}({p}_{j'}({\gamma}_{i}))$, and so there is $\mathring{q} \in \dom({\mathring{\Q}}_{{\gamma}_{i}})$ such that $p\restrict{\gamma}_{i}\; {\forces}_{{\gamma}_{i}} \; \mathring{q} \leq {p}_{j}({\gamma}_{i}), {p}_{j'}({\gamma}_{i})$.
 Set $p({\gamma}_{i}) = \mathring{q}$.
 Now $p\restrict{\gamma}_{i} + 1$ is fully defined and $p\restrict {\gamma}_{i} + 1 \leq {p}_{j} \restrict {\gamma}_{i} + 1, {p}_{j'} \restrict {\gamma}_{i} + 1$.
 Moreover, if $x\setminus \left(i + 1\right) \neq 0$ and $i' = \min(x\setminus \left(i + 1\right))$, then $p\restrict{\gamma}_{i'}$ is fully defined and $ p\restrict {\gamma}_{i'} \leq {p}_{j} \restrict {\gamma}_{i'}, {p}_{j'} \restrict {\gamma}_{i'}$, so that the induction can proceed.
 This completes the definition of $p$ and it is easy to check that $p \leq q, {p}_{j}, {p}_{j'}$. 
 \end{proof}
 We next show that if $\II$ is any normal ideal on an uncountable cardinal $\kappa$ with the property that for each $1 \leq n < \omega$, ${\left( {\II}^{n} \right)}^{+}$ is $\kappa$-Knaster, then $\kappa \rightarrow {(\kappa, \alpha)}^{2}$ holds for all $\alpha < {\omega}_{1}$.
 Our Theorem \ref{thm:main3} should be compared with Kunen's classical result from \cite{kunreal} that if $\kappa$ is real valued measurable, then $\kappa \rightarrow {(\kappa, \alpha)}^{2}$ holds for all $\alpha < {\omega}_{1}$.
 \begin{Def} \label{def:K1}
  Let $\delta$ be any ordinal and $c: {\[\delta\]}^{2} \rightarrow 2$ be a coloring.
  For $\alpha \in \delta$, define ${K}_{1, c}(\alpha) = \{\beta < \delta: \alpha < \beta \wedge c(\{\alpha, \beta\}) = 1\}$.
  We will omit the subscript $c$ when it is clear from the context.
  For $X \subset \delta$ and $n \in \omega$, define ${X}^{\[n\]} = \{F \in {\[X\]}^{n}: {\[F\]}^{2} \subset {K}_{1, c}\}$.
 \end{Def}
 \begin{Lemma} \label{lem:manyK1}
  Let $\kappa > \omega$ be a cardinal and let $\II$ be a normal ideal on $\kappa$.
  Suppose $c: {\[\kappa\]}^{2} \rightarrow 2$ is a coloring with no $0$-homogeneous set of size $\kappa$.
  Then for every $X \in {\II}^{+}$ there exists $\alpha \in X$ such that $X \cap {K}_{1}(\alpha) \in {\II}^{+}$.
 \end{Lemma}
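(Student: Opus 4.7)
The plan is to argue by contradiction. Suppose some $X \in \II^{+}$ witnesses the failure of the lemma, so $X \cap K_{1}(\alpha) \in \II$ for every $\alpha \in X$. I would aim to construct a $0$-homogeneous subset $Z \subset X$ of size $\kappa$ by transfinite recursion, contradicting the hypothesis on $c$.

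Before starting the recursion I would record one standard consequence of the hypotheses: because $\II$ is a normal (hence uniform) ideal on the uncountable cardinal $\kappa$, $\kappa$ must be regular and $\II$ is $\kappa$-complete. Regularity follows because a cofinal sequence of order-type $\cf(\kappa) < \kappa$ would give a regressive map on $\kappa$ whose fibers are bounded, hence in $\II$ by uniformity, contradicting normality. The usual Fodor-style argument then upgrades normality plus uniformity to $\kappa$-completeness.

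Now I would build $\{z_{\alpha} : \alpha < \kappa\} \subset X$ strictly increasing in the ordinal order with every pair receiving color $0$. Having chosen $\{z_{\beta}: \beta < \alpha\}$ for some $\alpha < \kappa$, set $\gamma = \sup_{\beta < \alpha}(z_{\beta} + 1) < \kappa$ and
\begin{align*}
N_{\alpha} = \gamma \cup \bigcup_{\beta < \alpha}\bigl(X \cap K_{1}(z_{\beta})\bigr).
\end{align*}
Each set in the union lies in $\II$ by the contradiction hypothesis, and the initial segment $\gamma$ is in $\II$ by uniformity; since there are fewer than $\kappa$ of them and $\II$ is $\kappa$-complete, $N_{\alpha} \in \II$. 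Therefore $X \setminus N_{\alpha} \in \II^{+}$, in particular nonempty, and I may pick $z_{\alpha}$ from it. For any $\beta < \alpha$ we have $z_{\beta} < \gamma \leq z_{\alpha}$ and $z_{\alpha} \notin K_{1}(z_{\beta})$, which forces $c(\{z_{\beta}, z_{\alpha}\}) = 0$. The set $Z = \{z_{\alpha} : \alpha < \kappa\}$ is then $0$-homogeneous of size $\kappa$, contrary to the hypothesis.

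There is essentially no main obstacle: this is a straightforward free-set recursion, and the whole content is the observation that normality promotes uniformity to $\kappa$-completeness, which is exactly what permits accumulating $|\alpha|$-many $\II$-small sets at each stage of the construction.
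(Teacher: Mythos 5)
Your proposal is correct and follows essentially the same route as the paper: a transfinite recursion choosing the next point of $X$ outside the union of the sets $X \cap K_{1}(z_{\beta})$ (each in $\II$ by the contradiction hypothesis) together with an initial segment, using normality to keep that union in $\II$. The paper invokes normality directly where you first extract $\kappa$-completeness from it, but this is only a presentational difference.
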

 \begin{proof}
  Suppose not.
  We get a contradiction to the hypotheses by constructing a $0$-homogeneous subset of $X$ of size $\kappa$ as follows.
  Fix $\delta < \kappa$ and suppose $\{{\alpha}_{\gamma}: \gamma < \delta\} \subset X$ is given.
  Assume that for all $\beta < \gamma < \delta$, ${\alpha}_{\beta} < {\alpha}_{\gamma}$ and that $c(\{{\alpha}_{\beta}, {\alpha}_{\gamma}\}) = 0$.
  By the normality of $\II$, $Y = \left( {\bigcup}_{\gamma < \delta} {\left( X \cap {K}_{1}({\alpha}_{\gamma}) \right) } \right) \cup \left( {\bigcup}_{\gamma < \delta}{\left( {\alpha}_{\gamma} + 1\right)} \right) \in \II$.
  Choose ${\alpha}_{\delta} \in X \setminus Y$.
  It is clear that ${\alpha}_{\delta}$ is as needed.
 \end{proof}
 \begin{Lemma} \label{lem:moreK1}
  Let $\kappa$, $\II$, and $c$ satisfy the hypotheses of Lemma \ref{lem:manyK1}.
  Let $\theta$ be a large enough regular cardinal.
  Suppose $n \in \omega$ and suppose ${M}_{0} \in \dotsb \in {M}_{n + 1}$ is a chain such that for each $0 \leq i \leq n + 1$,
  \begin{enumerate}
   \item
   ${M}_{i} \prec H(\theta)$ with $\kappa, \II, c \in {M}_{i}$;
   \item
   $\lc {M}_{i} \rc < \kappa$ and ${M}_{i} \cap \kappa \in \kappa$.
  \end{enumerate}
  For each $0 \leq i \leq n + 1$, define ${I}_{i} = \bigcup\left( {M}_{i} \cap \II \right)$.
  If ${\alpha}_{0}, \dotsc, {\alpha}_{n}$ are members of $\kappa$ such that: 
  \begin{enumerate}
   \item[(3)]
   for each $0 \leq i \leq n$, ${\alpha}_{i} \in {M}_{i + 1} \setminus {I}_{i}$;
   \item[(4)]
   ${\[\{{\alpha}_{0}, \dotsc, {\alpha}_{n}\}\]}^{2} \subset {K}_{1}$,
  \end{enumerate}
  then ${K}_{1}({\alpha}_{0}) \cap \dotsb \cap {K}_{1}({\alpha}_{n}) \in {\II}^{+}$.
 \end{Lemma}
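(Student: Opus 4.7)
The plan is to proceed by induction on $n$. In the inductive step I set $Y = {K}_{1}\left({\alpha}_{0}\right) \cap \cdots \cap {K}_{1}\left({\alpha}_{n-1}\right)$ (with the convention $Y = \kappa$ when $n = 0$), apply the inductive hypothesis to obtain $Y \in {\II}^{+}$, and then push a single application of Lemma \ref{lem:manyK1} through the elementarity of ${M}_{n}$.

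First, I would observe that the hypotheses force ${\alpha}_{0} < {\alpha}_{1} < \cdots < {\alpha}_{n}$. Writing ${\delta}_{i} = {M}_{i} \cap \kappa$, the uniformity of $\II$ (which follows from normality) makes every ordinal $\gamma < {\delta}_{i}$ lying in ${M}_{i}$ a bounded, hence $\II$-small, subset of $\kappa$, so ${\delta}_{i} \subset {I}_{i}$. Combined with ${\alpha}_{i} \in {M}_{i+1} \setminus {I}_{i}$, this gives ${\delta}_{i} \leq {\alpha}_{i} < {\delta}_{i+1}$, which settles the ordering. The homogeneity assumption (4) then yields ${\alpha}_{n} \in {K}_{1}({\alpha}_{i})$ for all $i < n$, so in particular ${\alpha}_{n} \in Y$.

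Next, applying the inductive hypothesis to the shorter chain ${M}_{0} \in \cdots \in {M}_{n}$ together with ${\alpha}_{0}, \dotsc, {\alpha}_{n-1}$ (the hypotheses transfer verbatim) gives $Y \in {\II}^{+}$. Define $Z = \{\beta < \kappa: Y \cap {K}_{1}(\beta) \in \II\}$. Since each ${\alpha}_{i}$ with $i < n$ lies in ${M}_{i+1} \subset {M}_{n}$, and since $c, \II \in {M}_{n}$, both $Y$ and $Z$ are members of ${M}_{n}$. The heart of the argument is then a diagonalization against Lemma \ref{lem:manyK1}: were $Y \cap Z$ to be $\II$-positive, the lemma would produce $\beta \in Y \cap Z$ with $(Y \cap Z) \cap {K}_{1}(\beta) \in {\II}^{+}$, whence $Y \cap {K}_{1}(\beta) \in {\II}^{+}$, contradicting $\beta \in Z$. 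Thus $Y \cap Z \in {M}_{n} \cap \II$, and so $Y \cap Z \subset {I}_{n}$. Since ${\alpha}_{n} \in Y$ yet ${\alpha}_{n} \notin {I}_{n}$, we get ${\alpha}_{n} \notin Z$, i.e., $Y \cap {K}_{1}({\alpha}_{n}) = {K}_{1}({\alpha}_{0}) \cap \cdots \cap {K}_{1}({\alpha}_{n}) \in {\II}^{+}$, as desired.

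The main obstacle I anticipate is the bookkeeping that ensures $Z$ actually lives in ${M}_{n}$: this hinges on the chain structure giving ${\alpha}_{i} \in {M}_{n}$ for every $i < n$, together with the insistence that $\II$ and $c$ were included in every model from the outset. Everything else is a clean iteration of the single Lemma \ref{lem:manyK1} diagonalization, which already handles the case $n = 0$ (where $Y = \kappa$ and $Z$ is visible to ${M}_{0}$ since it is definable from $\II$ and $c$ alone).
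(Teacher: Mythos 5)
Your proof is correct and follows essentially the same route as the paper's: induction on $n$, with the intersection of the first $n$ sets $\II$-positive by the inductive hypothesis, the set of ``bad'' $\beta$ shown to be $\II$-small by diagonalizing against Lemma \ref{lem:manyK1}, and $\alpha_n$ avoiding it because that bad set is definable in $M_n$ and hence contained in $I_n$. The only differences are cosmetic: you make explicit the diagonalization and the verification that $\alpha_0<\dotsb<\alpha_n$, both of which the paper leaves implicit.
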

 \begin{proof}
  The proof is by induction on $n$.
  When $n = 0$, ${K}_{1}({\alpha}_{0}) \in {\II}^{+}$ because by Lemma \ref{lem:manyK1}, $\{\alpha < \kappa: {K}_{1}(\alpha) \in \II\} \in \II \cap {M}_{0}$.
  Suppose the claim holds for $n \in \omega$ and consider ${M}_{0} \in \dotsb \in {M}_{n + 1} \in {M}_{n + 2}$ and ${\alpha}_{0}, \dotsc, {\alpha}_{n}, {\alpha}_{n + 1}$ satisfying the hypotheses of the claim.
  By the induction hypothesis, $X = {K}_{1}({\alpha}_{0}) \cap \dotsb \cap {K}_{1}({\alpha}_{n}) \in {\II}^{+} \cap {M}_{n + 1}$.
  Again by Lemma \ref{lem:manyK1}, $Y = \{\alpha \in X: X \cap {K}_{1}(\alpha) \in \II\} \in \II \cap {M}_{n + 1}$.
  By (3) ${\alpha}_{n + 1} \notin Y$ and by (4) ${\alpha}_{n + 1} \in X$.
  Therefore $X \cap {K}_{1}({\alpha}_{n + 1}) = {K}_{1}({\alpha}_{0}) \cap \dotsb \cap {K}_{1}({\alpha}_{n}) \cap {K}_{1}({\alpha}_{n + 1}) \in {\II}^{+}$.
 \end{proof}
 \begin{Lemma} \label{lem:shifting}
  Let $\kappa$, $\II$, and $c$ satisfy the hypotheses of Lemma \ref{lem:manyK1}.
  Suppose in addition that for each $1 \leq n < \omega$, ${\left( {\II}^{n} \right)}^{+}$ has the $\kappa$-Knaster property.
  Fix $X \in {\II}^{+}$ and $n \in \omega$.
  For every $Z \in \Pset({X}^{\[n + 1\]}) \cap {\left( {\II}^{n + 1} \right)}^{+}$ there exists $\alpha \in X$ such that $\forall A \in {\II}^{n + 1} \exists F \in Z\setminus A\[F \in {\left( {K}_{1}(\alpha) \right)}^{\[n + 1 \]}\]$.
 \end{Lemma}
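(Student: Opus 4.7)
I plan to prove the lemma by induction on $n$, with the conclusion strengthened from existence of a single good $\alpha$ to the statement that the set $S := \{\alpha \in X : Z \cap {\left( K_1(\alpha) \right)}^{\[n + 1\]} \in {\left( {\II}^{n + 1}\right)}^{+}\}$ is itself in ${\II}^{+}$. This upgrade is automatic: if $S \in \II$, then an easy separate induction on $n$ using normality gives that $\{F \in {\[\kappa\]}^{n + 1} : F \cap S \neq \emptyset\} \in {\II}^{n + 1}$, so $Z \cap {\left( X \setminus S \right)}^{\[n + 1\]}$ remains in ${\left( {\II}^{n + 1} \right)}^{+}$; the existence form applied to $X \setminus S \in {\II}^{+}$ with this restricted $Z$ produces $\alpha \in X \setminus S$ with $Z \cap {\left( K_1(\alpha) \right)}^{\[n + 1\]} \in {\left( {\II}^{n + 1}\right)}^{+}$, forcing $\alpha \in S$, a contradiction.

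The base case $n = 0$ is simply Lemma \ref{lem:manyK1} applied to $Z \in {\II}^{+}$. For the inductive step, let $Z \in \Pset({X}^{\[n + 2\]}) \cap {\left( {\II}^{n + 2} \right)}^{+}$ and set $Y = \{\alpha \in X : {\left(Z\right)}_{\alpha} \in {\left( {\II}^{n + 1} \right)}^{+}\} \in {\II}^{+}$. For each $\alpha \in Y$, the fiber ${\left(Z\right)}_\alpha$ is a subset of ${\left( X \cap K_1(\alpha) \right)}^{\[n + 1\]}$ that belongs to ${\left( {\II}^{n + 1} \right)}^{+}$, and the strengthened inductive hypothesis yields that the set $G_\alpha := \{\beta \in X \cap K_1(\alpha) : {\left(Z\right)}_\alpha \cap {\left( K_1(\beta) \right)}^{\[n + 1\]} \in {\left( {\II}^{n + 1} \right)}^{+}\}$ lies in ${\II}^{+}$.

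The principal obstacle is that each $G_\alpha$ furnishes positively many $\beta > \alpha$, whereas the conclusion to be established requires a single $\gamma$ good for positively many $\alpha > \gamma$. I intend to bridge this asymmetry by invoking the $\kappa$-Knaster property of ${\left( {\II}^{n + 1} \right)}^{+}$ applied to the $\kappa$-sequence $\langle {\left(Z\right)}_\alpha : \alpha \in Y\rangle$, extracting $Y' \in {\[Y\]}^{\kappa}$ with ${\left(Z\right)}_\alpha \cap {\left(Z\right)}_\beta \in {\left( {\II}^{n + 1} \right)}^{+}$ for all distinct $\alpha, \beta \in Y'$. Combining the Erd{\H o}s--Dushnik--Miller relation $\kappa \rightarrow {\left( \kappa, \omega \right)}^{2}$ with the hypothesis of no $0$-homogeneous set of size $\kappa$ under $c$ then refines $Y'$ to a $1$-homogeneous $Y'' \in {\[Y'\]}^{\kappa}$. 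A sufficiently generic $\gamma$ drawn from $Y''$ via an elementary-submodel argument in the spirit of Lemma \ref{lem:moreK1}, together with the normality of $\II$, should witness that positively many $\alpha > \gamma$ satisfy $c(\{\gamma, \alpha\}) = 1$ and ${\left(Z\right)}_\alpha \cap {\left( K_1(\gamma) \right)}^{\[n + 1\]} \in {\left( {\II}^{n + 1} \right)}^{+}$, the pairwise compatibility of the $(Z)_\alpha$'s within $Y''$ supplying the required $F$'s. The most delicate step, where I expect the heaviest technical work, is converting the size-$\kappa$ output of Knaster into an ${\II}$-positive output by exploiting normality.
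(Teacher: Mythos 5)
There is a genuine gap, and it sits exactly at the step you defer as ``the most delicate.'' Two concrete problems. First, the claim that $\kappa \rightarrow {\left( \kappa, \omega \right)}^{2}$ together with the absence of a $0$-homogeneous set of size $\kappa$ refines $Y'$ to a $1$-homogeneous ${Y}'' \in {\[Y'\]}^{\kappa}$ is false: that partition relation only produces a $1$-homogeneous set of order type $\omega$. In the intended application $\kappa = \c$, which is never weakly compact, so there are colorings with no homogeneous set of size $\kappa$ in either color; if a size-$\kappa$ $1$-homogeneous set were extractable here, Theorem \ref{thm:main3} would be immediate. Second, and more fundamentally, the $\kappa$-Knaster property hands you only a set of size $\kappa$, and no appeal to normality can upgrade this to an ${\II}^{+}$ set: a normal ideal on $\kappa$ contains the nonstationary ideal and hence contains many sets of size $\kappa$. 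Your scheme genuinely needs the stronger output, because to verify $Z \cap {\left( {K}_{1}(\gamma) \right)}^{\[n + 2\]} \in {\left( {\II}^{n + 2} \right)}^{+}$ you must exhibit ${\II}^{+}$-many $\alpha$ above a single $\gamma$ with $c(\{\gamma, \alpha\}) = 1$ and ${\left( Z \right)}_{\alpha} \cap {\left( Z \right)}_{\gamma} \in {\left( {\II}^{n + 1} \right)}^{+}$, and nothing in your construction delivers positively many such $\alpha$.

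The paper's proof avoids this trap by never trying to exhibit the witness directly. It argues by contradiction: assuming every $\alpha \in X$ fails, it fixes for each $\alpha$ a set ${A}_{\alpha} \in {\II}^{n + 1}$ outside of which $Z$ meets no ${\left( {K}_{1}(\alpha) \right)}^{\[n + 1\]}$, diagonalizes these bad sets into the fibers of $Z$ using normality and $\kappa$-completeness (forming ${B}_{\beta} = \bigcup\left\{{\left( {A}_{\alpha} \right)}_{\beta}: \alpha < \beta\right\}$, still in ${\II}^{n}$ for $\beta$ outside a small set), and only then applies the Knaster property to the shrunken fibers ${\left( Z \right)}_{\beta} \setminus {B}_{\beta}$. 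At that point a \emph{single} pair $\beta < \gamma$ with $c(\{\beta, \gamma\}) = 1$ --- which is all that ``no $0$-homogeneous set of size $\kappa$'' yields from a size-$\kappa$ set --- already produces a contradiction, since the resulting $F \in Z \cap {\left( {K}_{1}(\beta) \right)}^{\[n + 1\]}$ must lie in ${A}_{\beta}$ while its trace was chosen outside ${B}_{\gamma}$. No induction on $n$ and no ${\II}^{+}$-large set of good indices is ever needed. Your ``automatic upgrade'' to an ${\II}^{+}$-large set of witnesses and your reduction of the inductive step to the fibers ${\left( Z \right)}_{\alpha}$ are both sound, but the engine of the inductive step is missing.
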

 \begin{proof}
  First if $n = 0$, then recalling our identification of ${\[\kappa\]}^{1}$ with $\kappa$, the hypotheses say that $Z \subset X$ and that $Z \in {\II}^{+}$.
  So by Lemma \ref{lem:manyK1}, there is $\alpha \in Z \subset X$ such that $Z \cap {K}_{1}(\alpha) \in {\II}^{+}$.
  Given any $A \in {\II}^{1} = \II$, choose any $\beta \in \left( Z \cap {K}_{1}(\alpha)\right)\setminus A$.
  Then $\beta \in Z\setminus A$ and $\beta \in {K}_{1}(\alpha)$, as required.
  
  Assume that $1 \leq n < \omega$.
  Aiming for a contradiction, fix a counterexample $Z \in \Pset({X}^{\[n + 1\]}) \cap {\left( {\II}^{n + 1} \right)}^{+}$.
  Then for each $\alpha \in X$ there exists ${A}_{\alpha} \in {\II}^{n + 1}$ such that for each $F \in Z \setminus {A}_{\alpha}$, $F \notin {\left( {K}_{1}(\alpha) \right)}^{\[n + 1\]}$.
  For each $\alpha \in X$ note that ${L}_{\alpha} = \left\{\beta < \kappa: {({A}_{\alpha})}_{\beta} \in {\left( {\II}^{n} \right)}^{+}\right\} \in \II$.
  For $\alpha \in \kappa\setminus X$, let ${L}_{\alpha} = 0$.
  By the normality of $\II$, $L = \{\beta < \kappa: \exists \alpha < \beta\[\beta \in {L}_{\alpha}\]\} \in \II$.
  Since $Z \in {\left( {\II}^{n + 1} \right)}^{+}$, by definition ${Z}^{\ast} = \{\beta < \kappa: {\left( Z \right)}_{\beta} \in {\left( {\II}^{n} \right)}^{+} \} \in {\II}^{+}$.
  Observe that ${Z}^{\ast} \subset X$ and that ${Z}^{\ast\ast} = {Z}^{\ast} \setminus L \in {\II}^{+}$.
  Now it is easy to see that for each $\beta \in {Z}^{\ast\ast}$, ${\left( Z \right)}_{\beta}\setminus {B}_{\beta} \in {\left( {\II}^{n} \right)}^{+}$, where ${B}_{\beta} = \bigcup\left\{{\left( {A}_{\alpha} \right)}_{\beta}: \alpha \in {Z}^{\ast\ast} \wedge \alpha < \beta \right\}$.
  Using the fact that ${\left( {\II}^{n} \right)}^{+}$ has the $\kappa$-Knaster property and the fact that ${Z}^{\ast\ast} \in {\[\kappa\]}^{\kappa}$, it is possible to find ${Z}^{\ast\ast\ast} \in {\[{Z}^{\ast\ast}\]}^{\kappa}$ such that $\forall \beta, \gamma \in {Z}^{\ast\ast\ast}\[\left( {\left( Z \right)}_{\beta} \setminus {B}_{\beta} \right) \cap \left( {\left( Z \right)}_{\gamma} \setminus {B}_{\gamma} \right) \in {\left( {\II}^{n} \right)}^{+}\]$.
  Since $c$ does not have any $0$-homogeneous sets of size $\kappa$, there exist $\beta, \gamma \in {Z}^{\ast\ast\ast}$ such that $\beta < \gamma$ and $c(\{\beta, \gamma\}) = 1$. 
  Choose $G \in \left( {\left( Z \right)}_{\beta} \setminus {B}_{\beta} \right) \cap \left( {\left( Z \right)}_{\gamma} \setminus {B}_{\gamma} \right)$.
  There exist $H, F \in Z$ such that $G = H \setminus \{\beta\} = F \setminus \{\gamma\}$, $\beta = \min(H)$, and $\gamma = \min(F)$.
  Thus $H, F \in {X}^{\[n + 1\]}$, and since $c(\{\beta, \gamma \}) = 1$, it follows that $F \in {\left( {K}_{1}(\beta) \right)}^{\[n + 1 \]}$.
  This implies $F \in {A}_{\beta}$ because of the choice of ${A}_{\beta}$.
  Therefore $G \in {\left( {A}_{\beta} \right)}_{\gamma}$.
  However since $\beta \in {Z}^{\ast\ast}$ and $\beta < \gamma$, we get that $G \in {B}_{\gamma}$, contradicting the choice of $G$.
 \end{proof}
 \begin{Lemma} \label{lem:goingdown}
  Let $\kappa, \II, c, \theta, n, {M}_{0}, \dotsc, {M}_{n + 1}$, and ${\alpha}_{0}, \dotsc, {\alpha}_{n}$ be as in Lemma \ref{lem:moreK1}.
  Moreover suppose that for all $1 \leq m < \omega$, ${\left( {\II}^{m} \right)}^{+}$ has the $\kappa$-Knaster property.
  Let $X \in {M}_{0} \cap {\II}^{+}$ and suppose that $H = \{{\alpha}_{0}, \dotsc, {\alpha}_{n}\} \subset X$.
  Then $\exists \alpha \in X \cap {M}_{0}\[H \in {\left( {K}_{1}(\alpha)\right)}^{\[n + 1\]}\]$.
 \end{Lemma}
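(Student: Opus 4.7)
The plan is to reduce Lemma \ref{lem:goingdown} to a \emph{covering claim} proved by a transfinite iteration of Lemma \ref{lem:shifting}, and then combine it with a higher-dimensional analogue of the hypothesis $\alpha_i\notin I_i$ via elementarity applied inside $M_0$.

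For $Y\in\II^+$ and $S\subset Y$ set $U_S^Y=\{F\in Y^{[n+1]}:\forall\alpha\in S,\ F\not\subset K_1(\alpha)\}$. The covering claim $(\star)$ is that \emph{for every $Y\in\II^+$ there exists $S\subset Y$ with $|S|<\kappa$ and $U_S^Y\in\II^{n+1}$}. To prove $(\star)$ I iterate Lemma \ref{lem:shifting}: starting from $S_0=\emptyset$, at stage $\xi$ I halt if $U_{S_\xi}^Y\in\II^{n+1}$; otherwise Lemma \ref{lem:shifting} applied with base $Y$ and $Z=U_{S_\xi}^Y$ produces some $\alpha_\xi\in Y$ with $Z_\xi:=\{F\in U_{S_\xi}^Y: F\subset K_1(\alpha_\xi)\}\in(\II^{n+1})^+$, and necessarily $\alpha_\xi\notin S_\xi$ because membership in $U_{S_\xi}^Y$ forbids $F\subset K_1(\alpha)$ for any $\alpha\in S_\xi$. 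I set $S_{\xi+1}=S_\xi\cup\{\alpha_\xi\}$ and take unions at limits. The crucial observation is that $Z_\xi\cap Z_\zeta=\emptyset$ whenever $\zeta<\xi$: any $F\in Z_\xi$ lies in $U_{S_\xi}^Y$, and $\alpha_\zeta\in S_\xi$ then forces $F\not\subset K_1(\alpha_\zeta)$, precluding $F\in Z_\zeta$. A $\kappa$-long run would therefore yield a $\kappa$-antichain in $(\II^{n+1})^+$, contradicting the $\kappa$-Knaster hypothesis; hence the iteration terminates below $\kappa$, producing the required $S$.

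Next, I prove by induction on $n$ that $H\notin I_0^{(n+1)}:=\bigcup(M_0\cap\II^{n+1})$. The base case $n=0$ is just the hypothesis $\alpha_0\notin I_0$. For the inductive step fix $A\in M_0\cap\II^{n+1}$. By Definition \ref{def:fubinipower} the set $\{\beta<\kappa:(A)_\beta\in(\II^n)^+\}$ lies in $M_0\cap\II\subset I_0$, so $\alpha_0$ avoids it and $(A)_{\alpha_0}\in\II^n$; since $A,\alpha_0\in M_1$, this sectioned set is in $M_1\cap\II^n\subset I_1^{(n)}$. Applying the induction hypothesis to the shifted chain $M_1\in\cdots\in M_{n+1}$ and the sequence $\alpha_1,\dotsc,\alpha_n$ yields $H\setminus\{\alpha_0\}\notin I_1^{(n)}$, hence $H\setminus\{\alpha_0\}\notin(A)_{\alpha_0}$; since $H\in A$ iff $H\setminus\{\alpha_0\}\in(A)_{\alpha_0}$, we conclude $H\notin A$.

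To finish, $(\star)$ applied to $X$ is a first-order statement in parameters $X,\II,c,n,\kappa$, all of which lie in $M_0$, so elementarity produces $S\in M_0$ with $S\subset X$, $|S|<\kappa$, and $U_S^X\in\II^{n+1}$. Because $S\in M_0$ and $|S|<\kappa$, each element of $S$ sits in $M_0\cap\kappa$, whence $S\subset X\cap M_0$. Moreover $U_S^X\in M_0\cap\II^{n+1}\subset I_0^{(n+1)}$, so by the previous paragraph $H\notin U_S^X$. Since $H\in X^{[n+1]}$, the definition of $U_S^X$ now produces $\alpha\in S\subset X\cap M_0$ with $H\subset K_1(\alpha)$, i.e., $H\in(K_1(\alpha))^{[n+1]}$. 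The hard step in the entire argument is the proof of $(\star)$: the $\kappa$-Knaster hypothesis on $(\II^{n+1})^+$ is invoked precisely to rule out the pairwise disjoint $\kappa$-sequence $\langle Z_\xi\rangle$ that a non-terminating iteration would produce; everything else is bookkeeping with elementary submodels and sectioning in the Fubini-power ideals.
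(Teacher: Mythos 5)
Your argument is correct, and it reorganizes the paper's proof rather than reproducing it. The paper argues by contradiction: it introduces the decreasing filtration $Y(\beta)=\{F\in X^{[n+1]}:\forall\alpha\le\beta\,(\alpha\in X\Rightarrow F\notin(K_1(\alpha))^{[n+1]})\}$ for $\beta<\kappa$, shows each $Y(\beta)$ lies in $({\II}^{n+1})^+$ (for $\beta\in M_0\cap\kappa$ this is because $H\in Y(\beta)$ and $H$ avoids every member of $M_0\cap{\II}^{n+1}$; elementarity of $M_0$ then handles all $\beta$), uses the $\kappa$-Knaster property once to stabilize the filtration modulo ${\II}^{n+1}$, and applies Lemma \ref{lem:shifting} a single time to the stabilized set to reach a contradiction. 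You instead prove the covering statement $(\star)$ directly by a transfinite recursion of length $<\kappa$ that calls Lemma \ref{lem:shifting} at every stage, with Knaster terminating the recursion, and then reflect $(\star)$ into $M_0$. The two uses of Knaster are the same in substance --- each converts a putative failure into $\kappa$ pairwise disjoint positive sets --- and both proofs rest on the same two pillars: Lemma \ref{lem:shifting} and the genericity of $H$ over $M_0$ with respect to ${\II}^{n+1}$. Your write-up has the advantage of making that genericity explicit via the induction through the Fubini sections $(A)_{\alpha_0}$ (a step the paper compresses into the terse phrase ``$Y(\beta)\in({\II}^{n+1})^+$ because $H\in Y(\beta)$''), and of isolating $(\star)$ as a single first-order statement that elementarity reflects in one stroke; the price is invoking Lemma \ref{lem:shifting} transfinitely often where the paper needs it once. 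Both routes are sound.
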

 \begin{proof}
  Suppose not.
  For each $\beta < \kappa$ define $Y(\beta) =$
  \begin{align*}
   \left\{F \in {X}^{\[n + 1\]}: \forall \alpha \leq \beta \[\alpha \in X \implies F \notin {\left( {K}_{1} (\alpha) \right)}^{\[n + 1\]}\]\right\}.
  \end{align*}
  It is clear from the definition that $\forall \beta \leq \gamma < \kappa\[Y(\gamma) \subset Y(\beta)\]$.
  Note that $\langle Y(\beta): \beta < \kappa \rangle \in {M}_{0}$.
  If $\beta \in {M}_{0} \cap \kappa$, then $Y(\beta) \in {\left( {\II}^{n + 1} \right)}^{+}$ because $H \in Y(\beta)$ (by the suppose not).
  It follows from the elementarity of ${M}_{0}$ that $\forall \beta < \kappa\[Y(\beta) \in {\left( {\II}^{n + 1} \right)}^{+}\]$.
  Since ${\left( {\II}^{n + 1} \right)}^{+}$ has the $\kappa$-Knaster property, we can fix $\beta < \kappa$ such that $\forall \beta \leq \gamma < \kappa\[Y(\beta) \setminus Y(\gamma) \in {\II}^{n + 1}\]$.
  Let $Z = Y(\beta)$.
  Then $Z \in \Pset({X}^{\[n + 1\]}) \cap {\left( {\II}^{n + 1} \right)}^{+}$ and since $X \in {\II}^{+}$, Lemma \ref{lem:shifting} applies and implies that there is $\alpha \in X$ such that $\forall A \in {\II}^{n + 1} \exists F \in Z\setminus A \[F \in {\left( {K}_{1}(\alpha) \right)}^{\[n + 1\]}\]$.
  Let $\gamma = \max\{\alpha, \beta\}$ and $A = Y(\beta) \setminus Y(\gamma) \in {\II}^{n + 1}$.
  Find $F \in Z \setminus A$ with $F \in {\left( {K}_{1}(\alpha) \right)}^{\[n + 1\]}$.
  But now in contradiction to the definition of $Y(\gamma)$, we have $F \in Y(\gamma)$, $\alpha \leq \gamma$, $\alpha \in X$, and yet $F \in {\left( {K}_{1}(\alpha) \right)}^{\[n + 1\]}$.
 \end{proof}
 \begin{Theorem} \label{thm:main3}
  Let $\kappa > \omega$ be a cardinal.
  Let $\II$ be a normal ideal on $\kappa$.
  If for all $1 \leq n < \omega$, ${\left( {\II}^{n} \right)}^{+}$ has the $\kappa$-Knaster property, then $\kappa \rightarrow {\left( \kappa, \alpha \right)}^{2}$ for every $\alpha < {\omega}_{1}$. 
 \end{Theorem}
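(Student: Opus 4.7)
My plan is to prove the theorem by transfinite induction on $\alpha < \omega_{1}$, using a strengthened inductive hypothesis that interfaces cleanly with the elementary-submodel machinery developed in Lemmas \ref{lem:manyK1}--\ref{lem:goingdown}. Concretely, I would formulate and prove the following statement $(\dagger_{\alpha})$: for every $n \in \omega$, every elementary chain $M_{0} \in M_{1} \in \dotsb \in M_{n+1}$ satisfying the hypotheses of Lemma \ref{lem:moreK1}, every 1-homogeneous sequence $\alpha_{0} < \dotsb < \alpha_{n}$ with $\alpha_{i} \in M_{i+1} \setminus I_{i}$, and every $X \in M_{0} \cap \II^{+}$ containing the sequence, there exists a 1-homogeneous set $A \subset X$ of order type $\alpha$ with $\max A < \alpha_{0}$ and $A \cup \{\alpha_{0}, \dotsc, \alpha_{n}\}$ 1-homogeneous. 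The theorem itself then follows from $(\dagger_{\alpha})$ applied with $n = 0$ after picking a suitable $\alpha_{0}$ via Lemma \ref{lem:manyK1} inside a sufficiently large elementary submodel.

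The base case $\alpha = 1$ is exactly Lemma \ref{lem:goingdown}. For the successor step $\alpha = \beta + 1$, I would first invoke Lemma \ref{lem:goingdown} to obtain $\xi \in X \cap M_{0}$ below $\alpha_{0}$ with $\{\xi\} \cup \{\alpha_{0}, \dotsc, \alpha_{n}\}$ 1-homogeneous; then I would construct a shifted chain $N_{0} \in M_{0} \in M_{1} \in \dotsb \in M_{n+1}$, where $N_{0}$ is a fresh elementary submodel chosen inside $M_{0}$ (containing $X$ and the other relevant parameters), and invoke $(\dagger_{\beta})$ on this chain with the enlarged tuple $\{\xi, \alpha_{0}, \dotsc, \alpha_{n}\}$ of length $n + 2$. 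This produces $B \subset X$ of order type $\beta$ with $\max B < \xi$, and then $A = B \cup \{\xi\}$ has the required order type $\beta + 1$ with $\xi = \max A$ (so the sum is $\beta + 1$, not $1 + \beta$). For the limit case $\alpha = \sup_{k} \beta_{k}$ with $\beta_{k} \nearrow \alpha$ a fixed cofinal countable sequence, I would iterate the successor-style argument through an $\omega$-tower of nested inner submodels, assembling successively lower pieces of 1-homogeneous elements into a cumulative $A \subset [0, \alpha_{0})$ of order type $\alpha$.

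The main obstacle I expect is twofold. First, the positioning condition $\xi \in N_{1} \setminus J_{0}$ (with $J_{0} = \bigcup(N_{0} \cap \II)$) demanded by the shifted chain does not follow automatically from Lemma \ref{lem:goingdown}, because $\xi \in M_{0}$ forces $\{\xi\} \in M_{0} \cap \II$ and hence $\xi \in I_{0}$. To work around this I would choose $N_{0} \in M_{0}$ coarsely enough that $\{\xi\} \notin N_{0}$, so that $\xi$ lies outside $J_{0}$; such an $N_{0}$ exists by elementarity of $M_{0}$. Second, and more seriously, the limit case is subtle because naive iteration of the successor-style prepend produces a descending sequence of chosen elements, which cannot be continued $\omega$-many times by well-foundedness in $\kappa$. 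The intended resolution is to carry out the limit construction through a carefully orchestrated $\omega$-chain of inner submodels nested inside $M_{0}$, where at each stage $k$ a \emph{new} successor-style move is executed in a fresh lower region of $[0, \alpha_{0})$, adding a piece of order type summing to $\beta_{k}$ and then further extending below in the next inner submodel; the countability of $\alpha$ together with the closure of the submodels ensures the iteration can be completed in $\omega$ outer stages with all model sizes kept below $\kappa$. This is the step that will require the most delicate bookkeeping, and any genuinely infinite intersections of $K_{1}$-sets that might arise would need to be routed through Lemma \ref{lem:shifting} in concert with the $\kappa$-Knaster property of $(\II^{n})^{+}$.
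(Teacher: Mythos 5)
Your overall strategy---induction on $\alpha$ with a strengthened hypothesis that prepends a homogeneous set of type $\alpha$ \emph{below} a fixed finite 1-homogeneous tuple of anchors---is genuinely different from the paper's, and it has two gaps, one repairable and one fatal as stated. The repairable one is your handling of the positioning condition for the prepended point $\xi$. Choosing $N_{0}$ ``coarsely enough that $\{\xi\} \notin N_{0}$'' does not give $\xi \notin J_{0} = \bigcup(N_{0} \cap \II)$: you must avoid \emph{every} $Y \in N_{0} \cap \II$ containing $\xi$, and some such $Y$ may be definable from the parameters $\kappa, \II, c, X$ that $N_{0}$ is required to contain (for instance the set $\{\alpha : K_{1}(\alpha) \in \II\} \in \II$ of Lemma \ref{lem:manyK1}; nothing in the conclusion of Lemma \ref{lem:goingdown} keeps $\xi$ out of it). The correct order of quantifiers is the opposite of yours: fix $N_{0} \in M_{0}$ \emph{first}, note $J_{0} \in \II \cap M_{0}$ by normality, and apply Lemma \ref{lem:goingdown} to the positive set $X \setminus J_{0} \in M_{0}$ so that the returned $\xi$ avoids $J_{0}$ automatically. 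This is exactly how the paper arranges the condition $\alpha_{\gamma_{n}} \notin I_{\gamma_{n}}$: the submodel governing the slot exists before the point is chosen.

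The fatal gap is the limit case, which you leave as a sketch, and whose sketched mechanism cannot produce the right order type. Your construction only ever extends \emph{downward}: each new piece sits entirely below the previously built part. If the pieces occupy positions $P_{0} > P_{1} > P_{2} > \dotsb$ and have order types $\tau_{0}, \tau_{1}, \dotsc$, the union read in increasing order has type determined by the left-extended sums $\tau_{k} + \tau_{k-1} + \dotsb + \tau_{0}$, and to reach $\alpha = \sup_{k}\beta_{k}$ you would need $\tau_{k} + \beta_{k-1} = \beta_{k}$ at each stage; by non-commutativity of ordinal addition such $\tau_{k}$ need not exist (e.g.\ $\gamma + (\omega + 1) = \omega \cdot 2$ has no solution). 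The ``well-foundedness'' worry you name is not the real obstruction; this arithmetic one is, and your $\omega$-tower of inner submodels does not address it. The paper avoids the issue entirely by never inducting on $\alpha$: it fixes the whole $\in$-increasing chain $\langle N_{\gamma} : \gamma < \delta + 2\rangle$ in advance, so that each $\alpha_{\gamma}$ is forced into the window $[N_{\gamma} \cap \kappa,\, N_{\gamma+1} \cap \kappa)$ and the order type of $\{\alpha_{\gamma} : \gamma < \delta + 1\}$ is automatically $\delta + 1$; it then fills the slots in an arbitrary $\omega$-enumeration of $\delta + 1$ (starting with $0$ and $\delta$), using Lemma \ref{lem:moreK1} to keep the intersection of the $K_{1}$-sets of the already-chosen points with smaller index positive, and Lemma \ref{lem:goingdown} to place the new point below, and connected to, the already-chosen points with larger index. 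To complete your approach you would need to strengthen $(\dagger_{\alpha})$ so that the set of type $\alpha$ is built \emph{upward} inside a prescribed window below the anchors---at which point you are reconstructing the paper's slot argument.
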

 \begin{proof}
  Let $c: {\[\kappa\]}^{2} \rightarrow 2$ be a coloring.
  Assume that there is no $0$-homogeneous set of size $\kappa$.
  Fix $\delta < {\omega}_{1}$.
  We must find a $1$-homogeneous set of order type $\delta$.
  In fact it will be notationally more convenient to find a $1$-homogeneous set of order type $\delta + 1$, which is of course sufficient.
  We may also assume that $\delta \geq \omega$ because otherwise we may use the partition relation $\kappa \rightarrow {\left( \kappa, \omega \right)}^{2}$.
  Fix a sequence $\seq{N}{\gamma}{<}{\delta + 2}$ such that:
  \begin{enumerate}
   \item
   for each $\gamma < \delta + 2$, ${N}_{\gamma} \prec H(\theta)$ with $\kappa, \II, c \in {N}_{\gamma}$;
   \item
   for each $\gamma < \delta + 2$, $\lc {N}_{\gamma} \rc < \kappa$ and ${N}_{\gamma} \cap \kappa \in \kappa$;
  \item
  $\forall \beta < \gamma < \delta + 2\[{N}_{\beta} \in {N}_{\gamma} \wedge {N}_{\beta} \subset {N}_{\gamma}\]$.
  \end{enumerate}
  For each $\gamma < \delta + 2$, let ${I}_{\gamma} = \bigcup\left( {N}_{\gamma} \cap \II \right)$.
  We will find a sequence $\seq{\alpha}{\gamma}{<}{\delta + 1}$ such that:
  \begin{enumerate}
   \item[(4)]
   for each $\gamma < \delta + 1$, ${\alpha}_{\gamma} \in \kappa \cap \left( {N}_{\gamma + 1} \setminus {I}_{\gamma} \right)$;
   \item[(5)]
   ${\[\{{\alpha}_{\gamma}: \gamma < \delta + 1\}\]}^{2} \subset {K}_{1, c}$.
  \end{enumerate}
  It is easy to see that $\{{\alpha}_{\gamma}: \gamma < \delta + 1\}$ would be a subset of $\kappa$ of order type $\delta + 1$ so that this would be the $1$-homogeneous set we are looking for.
  Let $\{{\gamma}_{n}: n \in \omega\}$ be an enumeration without repetition of $\delta + 1$ such that ${\gamma}_{0} = 0$ and ${\gamma}_{1} = \delta$.
  First applying Lemma \ref{lem:manyK1} to $\kappa \setminus {I}_{0} \in {\II}^{+}$, we can find ${\alpha}_{0} \in {N}_{1}$ such that ${\alpha}_{0} \in \kappa \setminus {I}_{0}$ and $\left( \kappa \setminus {I}_{0} \right) \cap {K}_{1}({\alpha}_{0}) \in {\II}^{+}$.
  Next to find ${\alpha}_{\delta}$, let $X = \left( \kappa \setminus {I}_{0} \right) \cap {K}_{1}({\alpha}_{0})$ and note that $X \in {\II}^{+}$.
  So $X \setminus {I}_{\delta} \neq 0$ because ${I}_{\delta} \in \II$.
  Since $X\setminus{I}_{\delta} \in {N}_{\delta + 1}$, there exists ${\alpha}_{\delta} \in {N}_{\delta + 1} \cap \left( X \setminus {I}_{\delta} \right)$.
  It is clear that ${\alpha}_{\delta}$ is as needed.
  
  Now fix $1 < n < \omega$ and assume that for all $i < n$, ${\alpha}_{{\gamma}_{i}}$ has been defined so that (4) and (5) are satisfied.
  Consider $L = \{i < n: {\gamma}_{i} < {\gamma}_{n}\}$.
  Note that $0 \in L$.
  Let ${\beta}_{0} < \dotsb < {\beta}_{l - 1}$ enumerate the set $\{{\gamma}_{i}: i \in L\}$ in increasing order.
  Define ${\beta}_{l} = {\gamma}_{n}$.
  Then ${\beta}_{0} < \dotsb < {\beta}_{l - 1} < {\beta}_{l}$ holds.
  Now we can apply Lemma \ref{lem:moreK1} with ${M}_{i} = {N}_{{\beta}_{i}}$, for all $0 \leq i \leq l$, and ${\alpha}_{i} = {\alpha}_{{\beta}_{i}}$, for all $0 \leq i \leq l - 1$ to conclude that $Y = \bigcap\{{K}_{1}({\alpha}_{{\gamma}_{i}}): i \in L\} \in {\II}^{+}$.
  Redefine $X = Y\setminus{I}_{{\gamma}_{n}}$ and note that $X \in {N}_{\left( {\gamma}_{n} \right) + 1} \cap {\II}^{+}$ because ${I}_{{\gamma}_{n}} \in \II$.
  Consider $M = \{i < n: {\gamma}_{i} > {\gamma}_{n}\}$.
  Note $1 \in M$.
  Let ${\eta}_{1} < \dotsb < {\eta}_{m}$ enumerate the set $\{{\gamma}_{i}: i \in M\}$ in increasing order.
  For $0 \leq i \leq m$ define ${\beta}_{i}$ as follows: define ${\beta}_{0} = \left( {\gamma}_{n} \right) + 1$; if $i \neq 0$, then put ${\beta}_{i} = \left( {\eta}_{i} \right) + 1$.
  Then ${\beta}_{0} < \dotsb < {\beta}_{m} < \delta + 2$ holds.
  For all $0 \leq i \leq m$, define ${M}_{i} = {N}_{{\beta}_{i}}$.
  Next suppose $0 \leq i \leq m - 1$.
  Then ${\eta}_{i + 1} = {\gamma}_{j}$ for some $j \in M$.
  Define ${\alpha}_{i} = {\alpha}_{{\gamma}_{j}}$.
  It is easy to check that $\{{\alpha}_{0}, \dotsc, {\alpha}_{m - 1}\} \in {X}^{\[m\]}$.
  So Lemma \ref{lem:goingdown} applies (with $m - 1$ as $n$) and implies that there exists ${\alpha}_{{\gamma}_{n}} \in {M}_{0} = {N}_{{\beta}_{0}} = {N}_{\left( {\gamma}_{n} \right) + 1}$ such that ${\alpha}_{{\gamma}_{n}} \in X$ and $\{{\alpha}_{0}, \dotsc, {\alpha}_{m - 1}\} \in {\left( {K}_{1}({\alpha}_{{\gamma}_{n}}) \right)}^{\[m\]}$.
  This ${\alpha}_{{\gamma}_{n}}$ is as needed.
  This completes the construction and the proof.
 \end{proof}
 \begin{Cor} \label{cor:maincor2}
  Let $\kappa > \omega$ be a measurable cardinal.
  There is a c.\@c.\@c.\@ forcing extension in which $\c = \kappa$, $\MA(\sigma-\textrm{linked})$ holds, and $\c \rightarrow {\left( \c, \alpha \right)}^{2}$, for all $\alpha < {\omega}_{1}$.
  In particular in this extension, $\add(\N) = \b = \kappa = \c$ and $\kappa \rightarrow {\left(\kappa, \alpha \right)}^{2}$, for all $\alpha < {\omega}_{1}$.
 \end{Cor}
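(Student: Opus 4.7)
The plan is to perform, over a model of $\GCH$ containing $\kappa$, a standard finite support iteration of length $\kappa$ of $\sigma$-linked posets of cardinality less than $\kappa$, with bookkeeping arranged so that $\MA(\sigma-\text{linked})$ holds in the extension, and then to derive the partition relation via the machinery developed in this section. First, passing to an inner model or performing preliminary preparation if necessary, assume $\V \models \GCH$ while retaining a measurable cardinal at $\kappa$, and fix a normal measure $\U$ on $\kappa$ with $\I = \U^{\ast}$. Build an FS iteration $\langle \P_{\alpha}, \mathring{\Q}_{\alpha} : \alpha \leq \kappa \rangle$ where the bookkeeping at each stage $\alpha < \kappa$ specifies a full $\P_{\alpha}$-name $\mathring{\Q}_{\alpha}$ for a $\sigma$-linked poset of size less than $\kappa$, with every such name arising in the eventual iteration considered cofinally often. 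Inaccessibility of $\kappa$ together with $\GCH$ ensures $|\P_{\alpha}| < \kappa$ throughout, and that every $\sigma$-linked poset of size less than $\kappa$ in $\V[G]$ is captured up to isomorphism by some stage.

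Let $G$ be $(\V, \P_{\kappa})$-generic. Standard arguments yield $\c = \kappa$ and $\MA(\sigma-\text{linked})$ in $\V[G]$. For the ``in particular'' clause, note that every $\sigma$-centered poset is $\sigma$-linked, so $\MA(\sigma-\text{linked})$ implies $\MA(\sigma-\text{centered})$, and because both the amoeba forcing for measure and Hechler's forcing are $\sigma$-centered, we obtain $\add(\N) = \b = \c$ in $\V[G]$. For the partition relation, the iteration meets the hypotheses of Lemma \ref{lem:sigmaboundedquotients}: letting $\II$ denote the ideal on $\kappa$ generated by $\I$ in $\V[G]$, for every $1 \leq n < \omega$ the poset $(\II^{n})^{+}$ has the $\sigma$-bounded-c.\@c., and therefore also the $\sigma$-finite-c.\@c. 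Since $\kappa$ remains a regular uncountable cardinal in $\V[G]$, Lemma \ref{lem:knast} upgrades this to the $\kappa$-Knaster property for each $(\II^{n})^{+}$. By Lemma \ref{lem:normal}, $\II$ is a normal ideal on $\kappa$ in $\V[G]$, so Theorem \ref{thm:main3} applies and delivers $\kappa \rightarrow (\kappa, \alpha)^{2}$ for all $\alpha < \omega_{1}$.

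The delicate point in this plan is the bookkeeping: Lemma \ref{lem:sigmaboundedquotients} requires each iterand to be forced to be $\sigma$-linked \emph{of size less than $\kappa$}, and this smallness condition must be compatible with the density requirement for forcing $\MA(\sigma-\text{linked})$ at $\c = \kappa$. This is standard because $\kappa$ is inaccessible with $\GCH$ below it, so at stage $\alpha < \kappa$ the number of nice $\P_{\alpha}$-names for posets of size $<\kappa$ is itself less than $\kappa$, and by the usual trick of replacing a poset with a dense suborder or with the Boolean algebra it generates, every $\sigma$-linked poset of size less than $\c$ in $\V[G]$ descends to a small name appearing at some intermediate stage. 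Once this setup is correctly specified, the deduction reduces to the chain of applications of the lemmas already established.
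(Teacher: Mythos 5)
Your proposal follows essentially the same route as the paper's proof: the same finite support iteration of length $\kappa$ of $\sigma$-linked posets of size less than $\kappa$ with bookkeeping (where inaccessibility of $\kappa$ already gives $2^{<\kappa}=\kappa$, so the preliminary $\GCH$ preparation is unnecessary), followed by the same chain of Lemmas \ref{lem:normal}, \ref{lem:sigmaboundedquotients}, and \ref{lem:knast} feeding into Theorem \ref{thm:main3}. One small correction to your justification of the ``in particular'' clause: the amoeba forcing for measure is not $\sigma$-centered (a $\sigma$-centered forcing cannot make the ground model reals null), so $\add(\N)=\c$ should instead be obtained from the standard fact that $\MA(\sigma-\textrm{linked})$ implies $\add(\N)=\c$ via the $\sigma$-linked localization (slalom) poset, a fact the paper likewise invokes without proof.
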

 \begin{proof}
  By standard arguments, ${\MA}_{< \kappa}(\sigma-\textrm{linked})$ is equivalent to ${\MA}_{< \kappa}(\sigma-\textrm{linked})$ restricted to posets of size less than $\kappa$.
  Using the facts that $\kappa$ is regular, that $\kappa > \omega$, and that ${2}^{< \kappa} = \kappa$, it is possible to fix a bookkeeping function $F$ which will ensure that all names for posets of size less than $\kappa$ are eventually considered during our iteration.
  We do an FS iteration $\langle {\P}_{\alpha}, {\mathring{\Q}}_{\alpha}: \alpha \leq \kappa \rangle$ as follows.
  At a stage $\alpha < \kappa$, let $F(\alpha)$ be the object handed to us by the bookkeeping function.
  If $F(\alpha)$ is a ${\P}_{\alpha}$-name and ${\forces}_{\alpha} ``F(\alpha) \ \text{is a} \ \sigma-\textrm{linked} \ \text{poset such that} \ \lc F(\alpha) \rc < \kappa''$, then let ${\mathring{\Q}}_{\alpha}$ be a full ${\P}_{\alpha}$-name such that ${\forces}_{\alpha} F(\alpha) = {\mathring{\Q}}_{\alpha}$.
  Otherwise we let ${\mathring{\Q}}_{\alpha}$ be a full ${\P}_{\alpha}$-name for the trivial poset.
  This specifies the iteration.
  This is an FS iteration of c.\@c.\@c.\@ posets, and hence ${\P}_{\kappa}$ is c.\@c.\@c.\@
  Simple cardinality calculations show that $\lc {\P}_{\kappa} \rc = \kappa$.
  It follows that if $G$ is any $(\V, {\P}_{\kappa})$-generic filter, then in $\VG$, $\c \leq \kappa$.
  The bookkeeping function $F$ ensures that ${\MA}_{< \kappa}(\sigma-\textrm{linked})$ holds in $\VG$.
  Hence $\c = \kappa$ and $\MA(\sigma-\textrm{linked})$ holds.
  This implies that $\add(\N) = \b = \c = \kappa$.
  Finally by Lemmas \ref{lem:normal}, \ref{lem:sigmaboundedquotients}, and \ref{lem:knast} there is a normal ideal $\II$ on $\kappa$ such that for each $1 \leq n < \omega$, ${\left( {\II}^{n} \right)}^{+}$ has the $\kappa$-Knaster property.
  Therefore Theorem \ref{thm:main3} applies and implies that $\kappa \rightarrow {\left( \kappa, \alpha \right)}^{2}$ holds in $\VG$ for all $\alpha < {\omega}_{1}$.
 \end{proof}
 Note that $\b$ is equal to a former large cardinal in our model, meaning that, among other things, $\b$ is a fixed point of the $\aleph$--operation.
 So we can ask about the relation $\b \rightarrow {\left( \b, \alpha \right)}^{2}$ for smaller more accessible values of $\b$.
 This is of course related to the issue of whether any large cardinals are needed to show the consistency of $\b \rightarrow {\left( \b, \alpha \right)}^{2}$, for all $\alpha < {\omega}_{1}$.  
 \begin{Question} \label{q:partition6}
 What is the consistency strength of the relation $\b \rightarrow {\left( \b, \alpha \right)}^{2}$, for all $\alpha < {\omega}_{1}$?
 Can it be proved from a weakly compact cardinal?
\end{Question}
\begin{Question} \label{q:partition7}
 Is it consistent to have $\c = \b = {\omega}_{2}$ and $\c \rightarrow {(\c, \alpha)}^{2}$ for all $\alpha < {\omega}_{1}$?
\end{Question}
Note that a positive answer to Question \ref{q:partition7} requires that there be no ${\omega}_{2}$-Suslin trees.
Finally, we can ask whether the $\MA(\sigma-\textrm{linked})$ in Corollary \ref{cor:maincor2} can be replaced with a stronger forcing axiom.
\begin{Question} \label{q:partition8}
 Is $\ZFC + \MA + \forall \alpha < {\omega}_{1}\[\c \rightarrow {\left( \c, \alpha \right)}^{2}\]$ consistent?
\end{Question}
According to Theorem 2 of \cite{realpartition}, there is a c.\@c.\@c.\@ poset of size $\kappa$ which forces $\kappa \not\rightarrow {\left( \kappa, \omega + 2\right)}^{2}$ as long as $\cf(\kappa) > \omega$.
This example shows that even when $\kappa$ is a measurable cardinal, there need not be a normal ideal on $\kappa$ satisfying the hypothesis of Theorem \ref{thm:main3} after an FS iteration of c.\@c.\@c.\@ posets each having size less than $\kappa$ is performed.
\def\polhk#1{\setbox0=\hbox{#1}{\ooalign{\hidewidth
  \lower1.5ex\hbox{`}\hidewidth\crcr\unhbox0}}}
\providecommand{\bysame}{\leavevmode\hbox to3em{\hrulefill}\thinspace}
\providecommand{\MR}{\relax\ifhmode\unskip\space\fi MR }
\providecommand{\MRhref}[2]{%
  \href{http://www.ams.org/mathscinet-getitem?mr=#1}{#2}
}
\providecommand{\href}[2]{#2}

\end{document}